\documentclass[12pt,a4paper]{article}
\usepackage{amsmath}
\usepackage{amssymb}
\usepackage{epsfig}
\usepackage{textcomp}
\usepackage{multirow}
\usepackage{amsthm}
\usepackage{graphicx}
\usepackage{media9}
\usepackage{cite}
\usepackage{caption}
\usepackage{amsfonts}
\usepackage{mathrsfs}
\usepackage{algorithm}
\usepackage{algpseudocode}
\usepackage{color}
\usepackage{tikz}
\usetikzlibrary{arrows.meta, positioning, shapes.geometric}
\usepackage{graphicx}
\usepackage{amsmath,amsfonts,graphicx}
\usepackage{xcolor}
\usepackage{framed}
\usepackage{appendix}
\definecolor{myblue}{rgb}{0.01, 0.28, 0.55}
\definecolor{lightgray}{rgb}{0.95, 0.95, 0.95}
\usepackage{mathrsfs}
\usepackage{cite}
\usepackage{geometry}
\usepackage{pgfplots}
\usepackage{filecontents}
\pgfplotsset{compat=newest}
\usepackage{subcaption}
\usepackage{booktabs}
\usepackage{float}
\usepackage[utf8]{inputenc}
\usepackage{amsmath}
\tikzstyle{io} = [trapezium, trapezium left angle=70, trapezium right angle=110, draw, fill=blue!20, text width=3cm, align=center]
\tikzstyle{process} = [rectangle, draw, fill=orange!30, text width=4cm, align=center, minimum height=1cm]
\usepackage{algorithmicx}
\usepackage{lineno,hyperref}
\newtheorem{Theorem}{Theorem}[section]
\newtheorem{Lemma}[Theorem]{Lemma}

\newtheorem{Corollary}[Theorem]{Corollary}
\newtheorem{Remark}[Theorem]{Remark}

\begin{document}
	
	% Title and Author
	%\title{Numerical Analysis of Multi-Species Keller–Segel Models with Antagonistic Chemotaxis}
	\title{Multi-Species Keller--Segel Systems: Analysis, Pattern Formation, and Emerging Mathematical Structures}
	%\author{Edson Pindza\textsuperscript{1}, Kolade M. Owolabi\textsuperscript{2,*}, and Eben Mare\textsuperscript{3} \\
		\author{\small  Kolade M. Owolabi\textsuperscript{1, 2}\footnotemark, \; Eben Mar{\'e}\textsuperscript{3}, Clara O. Ijalana\textsuperscript{1} \; and Kolawole S. Adegbie\textsuperscript{1}\\
			\small $^1$Department of Mathematical Sciences,  Federal University of Technology Akure, \\
			\small PMB 704, Akure, Ondo State, Nigeria\\	
			\small $^2$Department of Mathematics and Applied Mathematics, School of Science and Technology,\\ 
			\small Sefako Makgatho Health Sciences University, Ga-Rankuwa 0208, South  Africa\\
			\small $^1$Department of Mathematics and Applied Mathematics, University of Pretoria,\\ 
			\small Pretoria 002, South Africa
		}
		\date{}
		\maketitle
		\def\thefootnote{\fnsymbol{footnote}}
		\setcounter{footnote}{0} \footnotetext[1]{E-mail addresses:
			mkowolax@yahoo.com (K.M. Owolabi)}
		\noindent

\begin{abstract}
	\noindent
	Chemotaxis systems of Keller--Segel type constitute one of the central mathematical frameworks for understanding aggregation phenomena in biological and ecological systems. Over the past decades, the theory has evolved from the classical single-species model to increasingly sophisticated multi-species and multi-signal formulations that capture competition, cooperation, antagonistic chemotaxis, and interactions with fluid environments. 
	This article provides a comprehensive exposition of multi-species Keller--Segel systems and their mathematical structure. We review fundamental analytical results concerning local and global well-posedness, mechanisms of finite-time blow-up, and the role of critical mass and dimensionality. Particular emphasis is placed on how cross-diffusion, antagonistic interactions, logistic effects, and nonlinear production terms alter the qualitative behavior of solutions. 
	We further examine the mathematical mechanisms underlying pattern formation, including diffusion-driven instabilities, bifurcation phenomena, and the emergence of spatial and spatiotemporal structures. Connections between analytical thresholds and observed nonlinear dynamics are highlighted, and the interplay between reaction kinetics, chemotactic sensitivity, and diffusion is discussed from a unifying perspective. 
	By synthesizing classical results with recent developments, this survey aims to clarify the structural principles governing multi-species chemotaxis systems, identify common analytical techniques, and outline open problems that remain central to the field. The exposition is intended to serve both specialists and researchers entering the area of nonlinear partial differential equations and mathematical biology.
\end{abstract}
		{\bf Keywords:} Antagonistic taxis; Chemotaxis; Mathemtical analysis; Numerical simulation; Pattern formation \\
		{\bf MSC 2020:} 35K57, 92C17, 35B32, 35A01, 65M70
		
		% Table of Contents
		%\tableofcontents

\section{Introduction}
Recent developments in chemotaxis modeling, particularly those rooted in the Keller--Segel framework, continue to reveal the rich complexity and biological relevance of these systems. Experimental findings by Phan et al.~\cite{phan2024} demonstrated a breakdown of the classical Patlak--Keller--Segel approximation under dynamically varying attractant gradients, emphasizing the need for models that incorporate spatiotemporal signal fluctuations and species-specific sensitivities. On the theoretical side, Cuevas et al.~\cite{cuevas2025} extended the solvability of Keller--Segel systems into pseudo-measure settings, allowing for the treatment of highly singular initial data that better resemble localized cell populations.

Several recent studies have proposed novel extensions of the Keller--Segel model. Islam and Ibragimov~\cite{islam2024} introduced a stochastic-based chemotaxis system grounded in Einstein's methodology, while Freingruber et al.~\cite{freingruber2025} developed trait-structured models with ligand--receptor kinetics to account for heterogeneity in cellular responses. These directions are especially relevant in the context of multi-species interactions, where distinct populations may simultaneously exhibit attraction and repulsion to shared chemical cues. Weyer et al.~\cite{weyer2024} also uncovered phase separation dynamics driven by chemotaxis, offering a unifying perspective with known mechanisms of biological patterning. Recent extensions of the Keller--Segel model have also addressed fluid coupling and tissue-level structures. Zheng et al.~\cite{Zheng2024} investigated a Keller--Segel system coupled with the incompressible Navier--Stokes equations, developing robust numerical methods to simulate chemotactic transport in fluid environments. Their framework highlights the interplay between chemotaxis and advection–diffusion dynamics, which is essential in modeling biological flows such as blood or mucus. On another front, Brune et al.~\cite{Brune2024} proposed a multiscale 3D--1D Keller--Segel model to describe directed cell migration along fibrous networks, offering insights into cancer invasion and tissue engineering. These studies demonstrate the increasing complexity and realism of chemotaxis models, motivating the need for efficient numerical solvers that can handle multi-dimensional, multi-species interactions as considered in the present work.

From a computational perspective, the development of stable and structure-preserving schemes remains critical for simulating multi-species chemotactic systems. The work of Xu and Fu~\cite{Xu2025} on mass-conservative numerical methods provides a foundation for simulating aggregation and blow-up in extended Keller--Segel models, including those with multiple interacting species \cite{Chen2022, Du2023}. Comprehensive reviews by Corrêa Vianna Filho and Guillén-González~\cite{vianna2024} and Arumugam and Tyagi~\cite{arumugam2020} further underscore the importance of extending classical Keller--Segel dynamics to systems involving two or more species, particularly in applications ranging from microbial competition to bio-remediation.

The present study contributes to this growing body of work by formulating and numerically solving two- and three-species Keller--Segel models featuring antagonistic chemotactic sensitivities—where one species is attracted while another is repelled by a shared chemical signal. Such interactions introduce rich spatiotemporal dynamics that are not captured by single-species models \cite{Zhao2023}. A central novelty of this work lies in the use of spectral-based solvers, particularly the Split-Step Fourier Method (SSFM), which offer superior accuracy and efficiency in resolving fine-scale pattern formation and transient behavior in periodic domains. This framework enables high-resolution simulations of chemotactic aggregation, competition, and signal modulation, advancing both the numerical methodology and the biological insight into multi-population chemotaxis.

\subsection{ Classical Keller--Segel Model}

The classical {Keller--Segel model} \cite{KellerSegel1971} provides a foundational PDE framework for microbial chemotaxis. It considers a population density $u(x,t)$ of motile microorganisms (e.g. bacteria) that diffuse randomly and move biasedly toward higher concentrations of a chemical attractant $v(x,t)$. A representative formulation (in nondimensional form) is \cite{Lankeit2019,Winkler2010,Winkler2020}: 
\begin{align}\label{eq:KS}
	\partial_t u(x,t) &= D \,\Delta u \;-\; \chi\, \nabla \!\cdot\! \big(u\,\nabla v\big)\;+\;f(u,v), \\[1ex]
	\partial_t v(x,t) &= D_v\,\Delta v \;+\; g(u,v)\,. \nonumber
\end{align}
Here $x\in\Omega\subseteq\mathbb{R}^n$ (with $\Omega$ often a bounded domain or $\mathbb{R}^n$ itself) and $t\ge0$. The term $D\,\Delta u$ models unbiased random diffusion of cells with diffusivity $D>0$. The key chemotactic term $-\chi\,\nabla\!\cdot(u\,\nabla v)$ represents directed cell flux up the gradient of $v$, where $\chi>0$ is the chemotactic sensitivity. The chemical $v(x,t)$ (often a nutrient or signaling molecule) diffuses with coefficient $D_v>0$ and may be produced or consumed by the cells, captured by $g(u,v)$ (e.g. $g(u,v)=\alpha u - \beta v$ for secretion at rate $\alpha$ and decay at rate $\beta$). In the simplest \textbf{classical Keller--Segel system}, one neglects any local population growth ($f(u,v)=0$) and assumes linear chemoattractant dynamics ($g(u,v)=\alpha u - \beta v$). This yields a two-way coupling: cells bias their movement based on $v$, and in turn cells modify $v$. Biologically, this models organisms that secrete an attractant and move toward higher concentration of that same chemical.

To close the model, one specifies initial conditions \begin{equation}
	u(x,0)=u_0(x)\ge0,\;\; \;\; v(x,0)=v_0(x)\ge0, 
\end{equation}
and often no-flux boundary conditions on $\partial\Omega$ (i.e. $\partial_{\mathbf{n}} u = \partial_{\mathbf{n}} v = 0$) so that neither cells nor chemical leak out of a closed domain. Under these conditions and $f(u)=0$, the \textbf{total cell mass} $M=\int_{\Omega} u(x,t)\,dx$ is \emph{conserved} for all time, since 
\begin{equation}
	\frac{d}{dt}\int_{\Omega}u\,dx = \int_{\Omega} (\Delta u - \chi \nabla\cdot(u\nabla v))\,dx = 0
\end{equation} 
by divergence theorem. This mass-conservation is a crucial property of the classical model, and it implies that any aggregation of cells in one region is accompanied by depletion elsewhere.

The Keller--Segel equations possess a rich mathematical structure. In two spatial dimensions ($n=2$), the system \eqref{eq:KS} exhibits a \emph{critical scaling invariance}: if $(u(x,t),v(x,t))$ is a solution, then for any $\lambda>0$
 \begin{equation}
	u_{\lambda}(x,t) = \lambda^2\,u(\lambda x,\lambda^2 t), \qquad 
v_{\lambda}(x,t) = v(\lambda x,\lambda^2 t)
\end{equation} 
is another solution (when $\chi$, $D$, $D_v$ are suitably scaled). This $2$-dimensional scaling leaves the combination $\int_{\Omega}u\,dx$ invariant, indicating that total mass serves as a critical parameter for behavior. Indeed, as we will see in Section~4.2, in $n=2$ there is a \emph{threshold total mass} above which chemotactic aggregation overwhelms diffusion, causing a finite-time blow-up (density becoming unbounded), whereas below that threshold the spreading effect of diffusion prevails to prevent blow-up. In higher dimensions $n\ge3$, the tendency to form singular aggregates is even stronger, while in $n=1$ spatial dimension diffusion dominates, and blow-up is absent.

Another important structural aspect is the existence of a Lyapunov functional (often called a \textit{free energy} or \textit{entropy} functional) for the pure chemotaxis system. For example, in the parabolic--elliptic simplification of \eqref{eq:KS} (where $v$ adjusts instantaneously to $u$ via $\;0=D_v\Delta v + \alpha u - \beta v$), one can reduce $v$ to an explicit functional of $u$ and define 
 \begin{equation}
	\mathcal{F}[u] \;=\; \int_{\Omega} \Big(u \ln u - u\Big)\,dx \;-\; \frac{\chi}{2\beta}\, \int_{\Omega}\!\int_{\Omega} G(x-y)\,u(x)u(y)\,dx\,dy,
\end{equation} 
where $G$ is the Green’s function of $-\Delta$ on $\Omega$. This $\mathcal{F}$ is non-increasing in time ( $\frac{d}{dt}\mathcal{F}[u(t)] \le 0$ ) along solutions of the Keller--Segel equations, reflecting a balance between entropy (diffusive mixing) and energy (chemical attraction). Such gradient-flow structure provides insight into long-term behavior: the system tends to evolve toward lower $\mathcal{F}$ (more clustering), unless a minimum (stable equilibrium) is reached. In particular, if the total mass is small, $\mathcal{F}$ is bounded below and the solution tends to spread out and approach a steady state; if mass is large, $\mathcal{F}$ can decrease without bound, correlating with unbounded growth of cell density (blow-up).

Overall, the classical Keller--Segel model encapsulates the competition between diffusion and chemotactic drift. Depending on parameter values and initial conditions, it can produce either a near-uniform dispersion of cells or sharp, concentrated aggregates. These dynamics make chemotaxis models a fruitful subject for mathematical analysis and also a realistic tool to understand biological pattern formation (as later sections will explore). We next discuss several extensions and generalizations of the basic model that incorporate additional complexities of microbial ecology.

\subsection{Extensions and Generalizations}

While the classical Keller--Segel system involves a single species and one chemoattractant, many ecological scenarios demand more complex models. A natural generalization is to \textbf{multi-species chemotaxis systems}. For instance, consider two interacting species of microorganisms (with densities $u_1(x,t)$ and $u_2(x,t)$) that respond to one or multiple chemical signals. One example is a two-species, one-chemical model:
\begin{equation}
	\begin{split}
\partial_t u_1 &= D_1 \Delta u_1 - \chi_1 \nabla\!\cdot(u_1 \nabla v), \\
\partial_t u_2 &= D_2 \Delta u_2 - \chi_2 \nabla\!\cdot(u_2 \nabla v),\\
\partial_t v& = D_v \Delta v + \alpha_1 u_1 + \alpha_2 u_2 - \beta v, 
	\end{split}
\end{equation}
with each species $i$ diffusing at rate $D_i$ and chemotactically moving with sensitivity $\chi_i$ towards (or away from) the common signal $v$. If $\chi_1,\chi_2>0$, both species are attracted to $v$, potentially leading to aggregation where they either mix or segregate spatially depending on their parameters. If one species has $\chi<0$ (i.e. it is \textbf{chemorepellent}), that species tends to spread out from high $v$ regions, which can fundamentally alter pattern outcomes (e.g. one species might form a core and the other a ring to avoid the core's signal). Multi-species chemotaxis models can capture competitive or cooperative interactions mediated by chemicals, and their mathematical analysis often reveals new phenomena like \emph{spatial segregation}: for certain parameter regimes, species may spontaneously sort into distinct regions (each attracted by its own signal or repelled by a competitor’s signal). Existence and boundedness of solutions in multi-species systems become more involved, but results have been obtained under various assumptions (for example, global existence is known under some smallness conditions or if one of the species experiences only repulsion). 

Another important extension involves \textbf{multiple chemicals or signals}. Microbes in nature often respond to a variety of cues (nutrients, toxins, signaling molecules, oxygen, etc.). Mathematically, one can introduce several fields $v_1(x,t), v_2(x,t), \dots, v_m(x,t)$ with corresponding chemotactic sensitivities. For example, a bacterium might be attracted to one substance ($\chi_1>0$ for $v_1$) but repelled by another ($\chi_2<0$ for $v_2$). The model would include terms like $- \nabla\!\cdot\!\big(u (\chi_1\nabla v_1 + \chi_2\nabla v_2 + \cdots)\big)$ in the $u$-equation, and each $v_j$ would have its own production/decay equation. The presence of multiple signals can lead to intricate pattern dynamics, such as oscillatory chasing behaviors or stabilizing one pattern mode while destabilizing another. Linear stability criteria (see Section~4.3) can be generalized to such multi-chemical systems, though the conditions become matrix-valued (eigenvalues of a larger stability matrix).

A further layer of realism is added by \textbf{chemotaxis–fluid coupling}, where microorganisms not only diffuse and chemotax on their own, but are also advected by a fluid flow. In microbial ecology, this is relevant for swimming bacteria or plankton in aquatic environments. The governing equations couple the chemotaxis model with Navier–Stokes (or Stokes) equations for the fluid velocity field $\mathbf{u}(x,t)$. A typical chemotaxis–fluid model reads:
\begin{equation}
\begin{split}
		\partial_t u + (\mathbf{w}\!\cdot\!\nabla) u &= D \Delta u - \chi \nabla\!\cdot(u \nabla v) + f(u)\,, \\ 
	\partial_t v + (\mathbf{w}\!\cdot\!\nabla) v &= D_v \Delta v + g(u,v)\,, \\
	\partial_t \mathbf{w} + (\mathbf{w}\!\cdot\!\nabla)\mathbf{w} &= -\nabla p + \nu \Delta \mathbf{w} + \kappa u\,\mathbf{e}_g,\qquad \nabla\!\cdot \mathbf{w}=0\,,
\end{split}
\end{equation}
where $\mathbf{w}(x,t)$ is the fluid velocity, $p$ the pressure, and $\nu$ the kinematic viscosity. The term $\kappa u\,\mathbf{e}_g$ (with $\mathbf{e}_g$ a unit vector, e.g. gravity direction) indicates that cell density may affect fluid buoyancy, creating a feedback (as in bioconvection). Even without that feedback ($\kappa=0$), the advection terms $(\mathbf{w}\cdot\nabla)u$ and $(\mathbf{w}\cdot\nabla)v$ mean the pattern-forming process now occurs in a moving medium. Fluid flow can convect clusters away or create stretching/shearing that competes with chemotactic aggregation. Mathematical results for chemotaxis–fluid systems show, for instance, that in 2D domains one can often prove global existence of solutions (the fluid flow helps to disperse concentrations), whereas in 3D the problem is significantly harder and blow-up can occur in theory. Research is ongoing, but certain damping effects (like logistic growth, discussed below) or smallness conditions on initial data can ensure well-posedness even in 3D (e.g. global weak solutions have been constructed for some chemotaxis–Navier–Stokes systems). Overall, coupling chemotaxis with fluid dynamics enriches the modeling of microbial ecology in environments like oceans or soil microfluidic networks, at the cost of considerable mathematical complexity.

\subsection{ Reaction Terms}

In realistic ecological settings, pure movement (diffusion and chemotaxis) is not the only process affecting microbial populations. \textbf{Reaction terms} $f(u,v)$ in \eqref{eq:KS} represent local population growth, death, or other intraspecific dynamics, while terms in $g(u,v)$ can model substrate consumption or toxin production. We highlight a few common types of reactions and their mathematical implications:

- \textbf{Logistic growth:} A widely used model for population growth with carrying capacity is $$f(u) = r\,u\!\left(1 - \frac{u}{K}\right),$$ where $r>0$ is the intrinsic growth rate and $K>0$ the environmental carrying capacity. The term $r u$ promotes exponential growth when population is low, while $-\,\frac{r}{K}u^2$ represents crowding effects (limited resources) that eventually halt growth at $u=K$. Logistic growth in a chemotaxis model tends to prevent indefinite aggregation because as $u$ increases, the net growth rate decreases and eventually becomes negative if $u$ overshoots $K$. Mathematically, the inclusion of a $-\,\mu u^2$ term (with $\mu = r/K$) provides a form of \emph{damping} that can suppress blow-up. In fact, rigorous analysis has shown that logistic damping guarantees global bounded solutions in many cases where the undamped ($f=0$) model might blow up. For example, Winkler (2010) proved that for the 2D Keller--Segel system, \emph{any} positive $\mu>0$ ensures global existence and uniform boundedness of solutions for all initial data \cite{Lorz2021,Kurokiba2023,Baghaei2025}. In higher dimensions $d\ge3$, a sufficiently large logistic term can likewise prevent blow-up (e.g. global existence holds if $\mu$ exceeds a threshold related to $d$, specifically $\mu > d-2$ for $d\ge3$ in one common nondimensional scaling). Thus, logistic growth provides a stabilizing mechanism: biologically, it limits population density, and mathematically, it yields a priori bounds that keep the solution smooth for all time.

- \textbf{Allee effects:} In contrast to logistic growth (where per capita growth rate decreases monotonically with $u$), an Allee effect means populations have difficulty establishing at very low densities. A simple way to model this is 
$$f(u) = r\,u\,\Big(1 - \frac{u}{K}\Big)\Big(\frac{u}{A} - 1\Big)$$ 
for some threshold $A>0$ (the \textit{Allee threshold}). Here $f(u)<0$ when $0<u<A$ (population in sparse conditions tends to decline), but for $u>A$, growth becomes positive up to the carrying capacity $K$. In a chemotactic context, an Allee effect can create a scenario where small random clusters of cells might die out instead of growing, whereas sufficiently large clusters will grow and possibly attract more cells chemotactically. This can lead to \emph{bistability}: for instance, a uniform steady state $u\equiv0$ and a high-density patterned state might both be locally stable, with the outcome depending on whether initial perturbations surpass a critical size. Mathematical analysis of chemotaxis models with Allee effects is more delicate; one can expect threshold phenomena for pattern formation (only above a certain total population will aggregation patterns persist). Studying such threshold dynamics often involves comparison principles and bifurcation theory to find non-trivial steady states emerging above the critical population size.

- \textbf{Substrate-limited growth:} Microbial growth is frequently limited by a substrate (nutrient or oxygen). Instead of logistic self-limitation, growth may depend on the concentration $s(x,t)$ of a substrate that is consumed by the microbes. A typical model is 
$$f(u,s) = \gamma \, u\, \frac{s}{K_s + s}$$ 
(a Michaelis–Menten or Monod uptake kinetics), coupled with a substrate depletion equation like 
\begin{equation}
	\partial_t s = D_s \Delta s - \kappa u\,\frac{s}{K_s+s}
\end{equation}
 (consumption by cells) plus possibly external supply. In the absence of chemotaxis, substrate limitation can cause travelling waves or pulled fronts (cells chase after nutrient gradients). With chemotaxis, one may see complex pattern dynamics: cells move toward nutrient-rich areas, consume the nutrient there, then move on, potentially creating \emph{wave-like patterns} of high cell density following behind nutrient waves. From an analytical viewpoint, the presence of an additional field $s(x,t)$ increases the system’s dimension and can yield oscillatory dynamics (e.g., if nutrient is replenished somehow, one could get predator-prey-like cycles between $u$ and $s$). Standard existence theory still applies (these are reaction–diffusion–taxis systems), but understanding long-term behavior might require combined energy estimates for both $u$ and $s$. Importantly, substrate limitation introduces another scale (the time for nutrient diffusion/consumption) that can either stabilize or destabilize patterns depending on whether it is faster or slower than cell diffusion.

- \textbf{Toxin production and inhibitory interactions:} Microbes may produce chemicals that inhibit either their own growth (auto-toxicity) or the growth of competitors. For example, a bacterium might secrete a bacteriocin or waste product that is harmful at high concentrations. In models, this can be represented by a chemical field $w(x,t)$ whose production is coupled to $u$, say 
$$\partial_t w = D_w \Delta w + \eta u - \delta w$$ 
and a negative influence on $u$ in $f(u,w)$ (such as $f(u,w) = -\lambda w\,u$ or a more thresholded effect). If $w$ diffuses, one gets a chemotaxis–reaction–diffusion system with both an attractant $v$ and an inhibitor $w$. The presence of an inhibitory agent generally acts to \emph{smooth out} or limit aggregation: if cells crowd too much, $w$ builds up and causes cell death or reduced motility, thus preventing indefinite growth of peaks. Mathematically, one can often obtain global boundedness with such inhibitory terms even without logistic damping, because $w$ provides a feedback that naturally caps $u$. From a pattern-formation perspective, competition between an excitatory signal ($v$) and an inhibitory signal ($w$) can lead to Turing-like pattern formation (reminiscent of activator–inhibitor systems in reaction–diffusion theory). In fact, a two-chemical system with one acting as an attractant and the other as a repellent or growth-inhibitor is very akin to the classical Turing mechanism for spatial patterns: under the right conditions, a uniform state can become unstable and give rise to spotted or striped patterns where the activator (attractant and population) is high in some regions and the inhibitor (toxin) is high in complementary regions, stabilizing the pattern. Nonlinear analysis (see Section~4.3) can establish existence of steady-state patterns in such cases via bifurcation theory.

In summary, reaction terms enrich chemotaxis models by incorporating population dynamics and additional biology. They often contribute critical saturation effects that ensure mathematical well-posedness (preventing blow-up) and produce a greater variety of patterns (stable equilibria, oscillations, multi-stability). In the following sections, we delve into the mathematical analysis of chemotaxis–reaction systems, highlighting key theoretical results on existence, stability, and pattern formation.

%\subsection*{Novelty of the Study}
This study presents a comprehensive and systematic numerical investigation of two- and three-species Keller--Segel-type chemotaxis models incorporating nonlinear reaction kinetics. Unlike previous works that often focus on single-species dynamics or apply a single numerical scheme, our approach integrates multiple high-resolution numerical methods---finite difference method (FDM), Split-Step Fourier Method (SSFM), and Exponential Time Differencing Runge--Kutta of order four (ETDRK4)---to solve the models in both one and two spatial dimensions.

A key novelty lies in the comparative performance of these numerical schemes in capturing the formation and evolution of complex chemotactic patterns, including ring-like, spot, and stripe structures. The extension to three interacting species in higher dimensions reveals novel interspecies dynamics, cross-diffusion effects, and competition-driven spatial segregation not reported in earlier studies. Our results highlight the critical influence of numerical resolution and method selection on the stability, accuracy, and richness of emergent patterns in chemotaxis systems.

Furthermore, the study demonstrates the capability of Fourier-based methods (SSFM, ETDRK4) to efficiently resolve stiff dynamics and long-time behavior, making them especially suitable for pattern-forming systems. This multi-method, multi-species, and multi-dimensional approach offers new insights into the computational modeling of biological aggregation and has potential applications in microbial ecology, tissue engineering, and biofilm modeling.

\section{ Mathematical Analysis}

\subsection{ Well-posedness}

We first address the fundamental question of \textit{well-posedness}: do the chemotaxis–reaction equations admit solutions that are mathematically and physically meaningful (existence), are those solutions unique given initial data (uniqueness), and do they depend continuously on the data (stability)? We focus on classical solutions (sufficiently smooth functions $u(x,t), v(x,t)$) to the PDE system \eqref{eq:KS} augmented with reasonable initial and boundary conditions.

\paragraph{Local Existence and Uniqueness.} For a wide class of chemotaxis–reaction models, one can prove that given initial profiles $u_0(x), v_0(x)$ that are smooth and nonnegative, there exists a \emph{local-in-time} solution that is unique. A typical result is:

\begin{Theorem}[Local Existence and Uniqueness]\label{thm:local-exist}
	Let $\Omega \subseteq \mathbb{R}^n$ be a domain with smooth boundary, and assume $u_0\in C^0(\overline{\Omega})$ and $v_0\in C^0(\overline{\Omega})$ are nonnegative initial data. Suppose $f(u,v), g(u,v)$ are sufficiently smooth (Locally Lipschitz in $(u,v)$, with at most linear growth for large arguments). Then there exists a time $T>0$ and a unique pair of functions 
	\[ u(x,t),\, v(x,t) \in C^{2,1}(\Omega\times(0,T)) \] 
	(classical $C^2$ in space, $C^1$ in time) that solve the system \eqref{eq:KS} on $0<t<T$ with $u(x,0)=u_0(x)$, $v(x,0)=v_0(x)$ and, e.g., homogeneous Neumann boundary conditions. Moreover, either this solution can be extended for all $t>0$ (global existence), or there is a finite blow-up time $T_{\max}<\infty$ such that $\lim_{t\to T_{\max}^-} \big(\,\sup_{x\in\Omega} u(x,t)\big) = +\infty$ (cell density becomes unbounded as $t\to T_{\max}$).
\end{Theorem}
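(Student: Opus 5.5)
We follow the standard fixed-point approach for parabolic chemotaxis systems, as in the works of Horstmann--Winkler and Winkler cited above. We implicitly take $\Omega$ bounded, which the Neumann setting and the use of $C^0(\overline{\Omega})$ suggest. To handle the cross-diffusion term $\nabla\cdot(u\nabla v)$ we need some control of $\nabla v$ at $t=0$. So we either slightly strengthen the hypothesis to $v_0\in W^{1,q}(\Omega)$ for some $q>n$, or use the smoothing of the Neumann heat semigroup and work in a weighted space that allows a singular factor $t^{-1/2}$ in $\|\nabla v(t)\|_\infty$ near $t=0$. Fix $R>\|u_0\|_\infty+\|v_0\|_{W^{1,q}}+1$ and, for small $T$, set
\[
X=\big\{(\bar u,\bar v)\in C^0(\overline\Omega\times[0,T])\times L^\infty((0,T);W^{1,q}(\Omega)) : \|\bar u\|_{\infty}+\sup_t\|\bar v(t)\|_{W^{1,q}}\le R\big\}.
\]
Define $\Phi(\bar u,\bar v)=(u,v)$ through the Duhamel formulas
\[
v(t)=e^{tD_v\Delta}v_0+\int_0^t e^{(t-s)D_v\Delta}g(\bar u,\bar v)(s)\,ds,
\]
\[
u(t)=e^{tD\Delta}u_0-\chi\int_0^t e^{(t-s)D\Delta}\nabla\cdot(\bar u\nabla\bar v)(s)\,ds+\int_0^t e^{(t-s)D\Delta}f(\bar u,\bar v)(s)\,ds .
\]

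\textbf{Contraction.} We use the standard Neumann semigroup estimates
\[
\|\nabla e^{t\Delta}w\|_{W^{1,q}}\lesssim t^{-1/2}\|w\|_{L^q},\qquad
\|e^{t\Delta}\nabla\cdot w\|_{\infty}\lesssim \big(1+t^{-\frac12-\frac{n}{2q}}\big)\|w\|_{L^q}.
\]
Since $q>n$, the exponent $\tfrac12+\tfrac{n}{2q}$ is below $1$, so the time kernels are integrable. Together with the local Lipschitz continuity of $f$ and $g$ on $\{|\cdot|\le R\}$, this gives $\Phi(X)\subset X$. It also gives $\|\Phi(a)-\Phi(b)\|_X\le C(R)\,T^{\theta}\|a-b\|_X$ for some $\theta>0$, so $\Phi$ is a contraction once $T$ is small. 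Banach's fixed-point theorem then yields a unique mild solution on $[0,T]$.

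\textbf{Regularity and nonnegativity.} We bootstrap using parabolic Schauder theory. Hölder regularity of $\nabla v$ comes first from the $v$-equation. We then rewrite the $u$-equation in nondivergence form,
\[
u_t=D\Delta u-\chi\nabla v\cdot\nabla u-\chi u\Delta v+f,
\]
and obtain $u,v\in C^{2+\alpha,1+\alpha/2}_{\rm loc}(\overline\Omega\times(0,T))$, which gives the classical $C^{2,1}$ solution.

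Nonnegativity needs extra structural assumptions, namely $f(0,v)\ge0$ and $g(u,0)\ge0$ for $u\ge0$. These hold for all the examples in the introduction. With them, the parabolic maximum principle applied to the linear equation for $u$ (with $v$ now known and smooth) gives $u\ge0$, and then the same argument gives $v\ge0$. Uniqueness within classical solutions follows because any classical solution is a mild solution lying in $X$ for small $T$, and mild solutions in $X$ are unique. Alternatively, a Gronwall estimate for the difference of two solutions in $L^2$ gives the same conclusion.

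\textbf{Extensibility.} The existence time $T$ depends only on an upper bound for $\|u_0\|_\infty+\|v_0\|_{W^{1,q}}$. Define $T_{\max}$ as the supremum of existence times. If $T_{\max}<\infty$ but $\limsup_{t\to T_{\max}}\|u(t)\|_\infty<\infty$, we show that $\|v(t)\|_{W^{1,q}}$ also stays bounded. This comes from the $v$-equation alone: $g$ has at most linear growth and the semigroup estimate gives
\[
\|v(t)\|_{W^{1,q}}\le C+C\int_0^t (t-s)^{-1/2}\big(1+\|u\|_\infty+\|v(s)\|_{L^q}\big)\,ds,
\]
followed by Gronwall. The linear growth hypothesis on $f$ and $g$ is what prevents $v$ from blowing up on its own, so it is exactly what is needed here. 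We can then restart the local existence argument at times arbitrarily close to $T_{\max}$ with a uniform step $T$, which extends the solution past $T_{\max}$, a contradiction. Hence $\|u(t)\|_\infty\to\infty$ as $t\to T_{\max}$ (a $\limsup$ can be upgraded to a limit by the uniform local existence time). Combined with nonnegativity, this is the stated $\sup_x u\to\infty$.

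\textbf{Main obstacle.} The step we expect to be hardest is the contraction estimate for the chemotactic term under only $C^0$ initial data for $v$. Here we would first try working in the space with weight $\sup_t t^{1/2}\|\nabla v(t)\|_\infty$. This forces a careful check that the product of the singular factor $t^{-1/2}$ with the semigroup kernel $(t-s)^{-1/2-n/(2q)}$ remains integrable. The check reduces to a Beta-function bound, which holds since both exponents are below $1$.
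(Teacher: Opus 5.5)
Your strategy (Duhamel formulation, Banach fixed point, Schauder bootstrap, restart) is the same as the paper's. However, you catch what the paper's proof skips. The paper runs the contraction in $C([0,T];C^0(\overline\Omega))^2$ and bounds the chemotactic Duhamel term by $\sqrt t\,\sup\|\tilde u\|_{C^0}\|\tilde v\|_{C^0}$, silently dropping the gradient on $\tilde v$. So its factor $LT^{1/2}$ has no basis in that space. Your $W^{1,q}$ version is the standard repair and is correct, with two small fixes: read your first semigroup bound as $\|e^{t\Delta}w\|_{W^{1,q}}\le C(1+t^{-1/2})\|w\|_{L^q}$, and enlarge $R$ by the semigroup constants. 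It gives $\theta=\tfrac12-\tfrac{n}{2q}$, and your treatment of nonnegativity and continuation is more complete than the paper's. But it proves the theorem only for $v_0\in W^{1,q}(\Omega)$, $q>n$, which is strictly stronger than the stated hypothesis $v_0\in C^0(\overline\Omega)$.

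For the stated hypothesis, your weighted fallback does not work as described. The issue is scaling, not integrability. You have $\int_0^t(t-s)^{-\frac12-\frac{n}{2q}}s^{-\frac12}\,ds=B\big(\tfrac12-\tfrac{n}{2q},\tfrac12\big)\,t^{-\frac{n}{2q}}$, which is unbounded as $t\to0$, so this estimate does not even show that $\Phi$ maps into bounded $u$. With an estimate of exact order $\tau^{-1/2}$, e.g.\ $\|e^{\tau\Delta}\nabla\cdot w\|_{L^\infty}\le C(1+\tau^{-1/2})\|w\|_{L^\infty}$, the integral becomes $B(\tfrac12,\tfrac12)=\pi$, independent of $t$. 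That is the best possible: $\|v\|_{L^\infty}$ is invariant under $v_\lambda(x,t)=v(\lambda x,\lambda^2t)$, so $C^0$ data for $v$ are critical. Concretely, for $D=D_v=1$, $\bar u\equiv1$ and $\bar v(s)=e^{s\Delta}v_0$, the chemotactic Duhamel term is, up to the factor $-\chi$, exactly $t\Delta e^{t\Delta}v_0$. This does not become small relative to $\sup_s s^{1/2}\|\nabla\bar v(s)\|_\infty$ as $T\to0$. Hence no bound $C(R)T^\theta\|a-b\|_X$ holds, and the same obstruction defeats the paper's $LT^{1/2}$ even after the gradient is put into its norm.

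What works instead is a Kato-type argument. For $v_0\in C^0(\overline\Omega)$, $\sup_{0<t<T}t^{1/2}\|\nabla e^{tD_v\Delta}v_0\|_{L^\infty}\to0$ as $T\to0$ by density of smooth functions, while the Duhamel part of $t^{1/2}\nabla v$ is $O(T)$. So work in a ball where $\sup_t t^{1/2}\|\nabla\bar v(t)\|_{L^\infty}\le\eta$ with $\eta$ small. The term $\|\bar u_2\|_\infty\sup_s s^{1/2}\|\nabla(\bar v_1-\bar v_2)(s)\|_\infty$ carries an $O(R)$ coefficient, so put a large weight on the $v$-gradient part of the norm (or contract $\Phi\circ\Phi$), using that the $v$-map gains a factor $T$. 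The existence time obtained this way depends on the profile of $v_0$, not only on its norm. For continuation you can restart at positive times, where $v(t_0)\in W^{1,q}(\Omega)$, so your extension argument still applies. Uniqueness among classical solutions uses the fact that, by Duhamel, every such solution has $t^{1/2}\|\nabla v(t)\|_\infty\to0$.

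Finally, your upgrade from $\limsup$ to $\lim$ is not justified. Your existence time also depends on $\|v(t_0)\|_{W^{1,q}}$, and your bound for it uses $\sup_{s<t_0}\|u(s)\|_\infty$. So a bound on $\|u(t_k)\|_\infty$ along a sequence $t_k\nearrow T_{\max}$ does not control $\nabla v(t_k)$. What your argument actually gives is $\limsup_{t\nearrow T_{\max}}\|u(t)\|_\infty=\infty$, together with $\|u(t)\|_\infty+\|v(t)\|_{W^{1,q}}\to\infty$; the paper does not address this either.
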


\begin{proof}
	There are several approaches to establish this result. One convenient method is the method of \textit{contraction mapping} (Picard iteration) applied to an equivalent integral form of the system. We outline the key steps:
	
	1. Regularization and integral form: We begin by regularizing any potential singularities in the chemotaxis term. For example, one can first consider a truncated or smoothed version of the nonlinear term $-\nabla\!\cdot(u\nabla v)$ to avoid technicalities with low regularity. Alternatively, since $v$ satisfies a diffusion equation, one may solve for $v$ in terms of $u$ by the variation-of-constants formula. In particular, the $v$-equation can be written as 
	 \begin{equation}
	 	v(x,t) = e^{D_v t \Delta} v_0(x) \;+\; \int_0^t e^{D_v (t-s)\Delta}\, g(u(x,s),\,v(x,s))\,ds, 
	 \end{equation}
	where $e^{D_v t\Delta}$ is the heat semigroup on $\Omega$. Similarly, $u$ can be written as 
	\begin{equation}
		u(x,t) = e^{D t \Delta} u_0(x)\;+\;\int_0^t e^{D (t-s)\Delta}\Big[\chi\,\nabla\!\cdot(u\nabla v) + f(u,v)\Big](x,s)\,ds.
	\end{equation}  
	This formulation expresses $(u,v)$ as an integral operator $\Phi[(\tilde u,\tilde v)]$ applied to itself.
	
	2. Mapping into a Banach space: We define a Banach space 
	$$X = \{(u,v): u,v \in C([0,T]; C^0(\overline{\Omega}))\}$$ with the norm $$\|(u,v)\|_X = \sup_{0\le t\le T}(\|u(\cdot,t)\|_{C^0} + \|v(\cdot,t)\|_{C^0}).$$ For sufficiently small $T>0$, one shows that $\Phi$ maps $X$ into itself, provided one starts with an initial guess $(\tilde u,\tilde v)$ in $X$.
	
	3. Contraction property: By using \textit{a priori} estimates on the heat semigroup, one can show that if $T$ is chosen small enough, $\Phi$ is a contraction on $X$. For instance, heat kernel estimates give $\|e^{D t\Delta}\nabla\!\cdot F\|_{C^0} \le C\,t^{-1/2}\|F\|_{C^0}$ in spatial dimension $n\le 3$. The chemotaxis term involves $F = \tilde u \nabla \tilde v$, and since at time $s$ we expect $\|\tilde u(\cdot,s)\|_{C^0}$ and $\|\tilde v(\cdot,s)\|_{C^0}$ to remain bounded (for short times), the integral in the $u$-equation is estimated by $$\int_0^t C (t-s)^{-1/2} \|\tilde u(s)\|_{C^0}\|\nabla \tilde v(s)\|_{C^0} ds.$$ 
	This $t^{-1/2}$ singularity is integrable over $0<s<t$ and yields a bound $\sim C \sqrt{t}\,\sup_{s\in[0,t]}\|\tilde u(s)\|_{C^0}\|\tilde v(s)\|_{C^0}$. For small $t$, this can be made arbitrarily small. Similarly, the $f(u,v)$ and $g(u,v)$ reaction terms are Lipschitz, contributing bounds linear in $\|u\|_{C^0}, \|v\|_{C^0}$. By comparing $\Phi[(u,v)]$ and $\Phi[(\hat u,\hat v)]$, one finds 
	\[ \|\Phi[(u,v)] - \Phi[(\hat u,\hat v)]\|_X \le L\,T^{1/2} \, \|(u-\hat u,\,v-\hat v)\|_X \] 
	for some constant $L$ depending on the data. Thus, for $T$ sufficiently small that $L\,T^{1/2}<1$, the operator $\Phi$ is a contraction on $X$.
	
	4. By Banach’s fixed-point theorem, $\Phi$ has a unique fixed point in $X$. This fixed point $(u,v)$ is the unique mild solution to the integral equations, and by standard parabolic regularity theory, it is in fact a classical $C^{2,1}$ solution to \eqref{eq:KS} on $0<t<T$. Uniqueness within the class of classical solutions follows directly from the contractive estimate: if two solutions existed, their difference would satisfy the same inequality, forcing the difference to be zero. This establishes local existence and uniqueness. The solution can be extended stepwise beyond $T$ as long as its norm remains bounded; thus either one continues for all $t$ (global existence) or some norm must blow up in finite time if the solution ceases to exist.
\end{proof}

\noindent \textbf{Regularity:} The above theorem yields $u,v$ that are as smooth as the PDE allows (given smooth initial data). In particular, the solution instantly becomes $C^\infty$ in space for $t>0$ due to the analyticity of the heat semigroup (diffusion terms). One can also show higher-order spatial derivatives exist and satisfy \textit{a priori} bounds locally. This justifies substituting the solution back into \eqref{eq:KS} and performing further analysis (stability, asymptotics, etc.) on classical solutions.

\paragraph{Global vs. Finite-Time Existence.} Theorem~\ref{thm:local-exist} leaves open whether $T_{\max}$ is finite or infinite. For general chemotaxis systems, finite-time blow-up \emph{can} occur, corresponding to solutions developing singularities (typically $u(x,t)$ blowing up at one or more points). A major endeavor in chemotaxis theory has been to find conditions that guarantee \textit{global existence} (solution exists for all $t>0$) and conditions that lead to \textit{blow-up}. We summarize a few key results:

- In \textbf{one spatial dimension} ($n=1$), it is known that solutions exist globally and remain bounded for essentially all reasonable initial data. Intuitively, on a line, cells cannot surround a point from all sides, making extreme aggregation harder. Mathematically, one can derive maximum-principle-type estimates that prevent blow-up in 1D. For example, the gradient term $\nabla v$ in 1D is an spatial derivative of $v$, and using an inversion $\partial_x v = w$, one can often transform the system and employ comparison principles to show $\|u(\cdot,t)\|_{L^\infty}$ remains bounded. Thus, chemotactic collapse is fundamentally a multi-dimensional phenomenon.

- In higher dimensions, whether $T_{\max}=\infty$ or finite depends on a delicate balance of diffusion, chemotaxis, and any damping effects from $f(u)$. For the \textbf{classical Keller--Segel model} without logistic terms ($f=0$), the critical case is $n=2$. We will discuss this in detail in Section~4.2: there is a famous critical total mass $M_c$ (equal to $8\pi$ under certain nondimensionalizations) such that if $M = \int_\Omega u_0 < M_c$, the solution exists globally, whereas if $M > M_c$, the solution blows up in finite time. In $n=3$ or larger, the classical model is even more prone to blow-up (in fact, for $n\ge3$ and $f=0$, it is conjectured that any solution with sufficiently large mass will eventually blow up, and global existence holds only under extra smallness conditions or with additional regularizing effects like fluid advection or diffusion that grows with $u$).

- If \textbf{strong enough reaction damping} is present (such as logistic $-\,\mu u^2$ with $\mu$ above a threshold), one can often guarantee global existence in any dimension. We already noted that in 2D any $\mu>0$ suffices to prevent blow-up:contentReference[oaicite:3]{index=3}. In 3D, a condition like $\mu > \mu_c = d-2$ (with $d=3$ giving $\mu>1$) ensures global existence, and even at $\mu=\mu_c$ there are results of global existence under additional assumptions:contentReference[oaicite:4]{index=4}. These results stem from the fact that when reaction terms consume or limit $u$ at high density, one can derive uniform-in-time $L^\infty$ bounds for $u(x,t)$ via comparison to logistic ODE behavior or by constructing an appropriate Lyapunov functional that now includes the effect of $f(u)$.

- In summary, for most well-behaved models of the form \eqref{eq:KS}, we have the dichotomy: either the classical solution is global and bounded, or it blows up in finite time by developing a singularity. No other kind of breakdown occurs (e.g. one cannot have a solution that ceases to exist with a finite supremum norm). This dichotomy is proven by the so-called \emph{continuation criterion}: if a solution cannot be continued past $T_{\max}$, it must be because a norm (typically $\|u(\cdot,t)\|_{L^\infty}$ or an $H^s$ norm) diverges as $t\to T_{\max}$. 

We will next investigate the mechanisms of pattern formation and blow-up (instabilities) in chemotaxis systems, which will elucidate how these finite-time singularities arise and what stable patterns can form when blow-up is averted.

\subsection{Theoretical Analysis of the Keller--Segel Model with Logistic Growth}
We consider the following Keller--Segel-type chemotaxis--reaction system:
\begin{align}\label{eq:KS1}
	\partial_t u(x,t) &= D \,\Delta u \;-\; \chi\, \nabla \cdot \big(u\,\nabla v\big)\;+\;r\,u\left(1 - \frac{u}{K}\right), \\[1ex]
	\partial_t v(x,t) &= D_v\,\Delta v \;+\; \alpha u - \beta v, \nonumber
\end{align}
for $x \in \Omega \subset \mathbb{R}^n$, $t > 0$, subject to homogeneous Neumann boundary conditions:
\[
\frac{\partial u}{\partial n} = \frac{\partial v}{\partial n} = 0 \quad \text{on } \partial \Omega,
\]
where $D, D_v > 0$ are diffusion coefficients, $\chi > 0$ is the chemotactic sensitivity, and $r, K, \alpha, \beta > 0$ are parameters describing growth, carrying capacity, production, and degradation rates, respectively.

\subsubsection{Steady States and Linear Stability Without Diffusion}

Consider the spatially homogeneous system (i.e., neglect diffusion terms):
\begin{align*}
	\frac{du}{dt} &= r u \left(1 - \frac{u}{K}\right),\\
	\frac{dv}{dt} &= \alpha u - \beta v.
\end{align*}
This ODE system admits a unique positive steady state $(u^*, v^*)$ given by
\[
u^* = K, \qquad v^* = \frac{\alpha}{\beta} K.
\]

\begin{Theorem}[Local Stability Without Diffusion]
	The positive steady state $(u^*, v^*)$ is locally asymptotically stable for the spatially homogeneous system.
\end{Theorem}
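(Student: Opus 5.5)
The plan is to linearize the spatially homogeneous system at $(u^*,v^*)=(K,\alpha K/\beta)$ and invoke the principle of linearized stability (Hartman--Grobman / Lyapunov's indirect method). The right-hand side
\[
F(u,v)=\Big(r u\big(1-\tfrac{u}{K}\big),\ \alpha u-\beta v\Big)
\]
is polynomial, hence $C^1$, so this method applies. It therefore suffices to show that every eigenvalue of the Jacobian $J=DF(u^*,v^*)$ has strictly negative real part.

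First I compute the partial derivatives. We have $\partial_u\big[ru(1-u/K)\big]=r-2ru/K$, which equals $-r$ at $u=K$. The first component does not depend on $v$, so $\partial_v$ of it is $0$. For the second component, $\partial_u(\alpha u-\beta v)=\alpha$ and $\partial_v(\alpha u-\beta v)=-\beta$. Hence
\[
J=\begin{pmatrix}-r & 0\\ \alpha & -\beta\end{pmatrix}.
\]
This matrix is lower triangular, so its eigenvalues are $\lambda_1=-r<0$ and $\lambda_2=-\beta<0$, using $r,\beta>0$.

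Equivalently, $\operatorname{tr}J=-(r+\beta)<0$ and $\det J=r\beta>0$, which are the Routh--Hurwitz conditions for a $2\times2$ system. This makes $(u^*,v^*)$ a hyperbolic stable node; it is a stable degenerate node in the case $r=\beta$, and is still asymptotically stable there. By linearized stability, $(u^*,v^*)$ is locally asymptotically stable, and perturbations decay exponentially at rate at least $\min\{r,\beta\}$.

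As a cross-check that avoids linearization theory, I would note that the system is triangular. The $u$-equation is the scalar logistic ODE, whose explicit solution gives $u(t)\to K$ exponentially for every $u_0>0$. The $v$-equation is then a linear ODE with forcing $\alpha u(t)\to\alpha K$, so variation of constants gives $v(t)\to\alpha K/\beta$. This even yields convergence for all initial data with $u_0>0$.

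There is no real obstacle here. The only points needing care are verifying the hypotheses of the linearization theorem, namely $C^1$ smoothness and hyperbolicity, and noting that the positivity assumptions $r,\beta>0$ are exactly what make both eigenvalues negative.
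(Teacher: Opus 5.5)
Your proof is correct and follows the paper's argument: you compute the lower-triangular Jacobian at $(K,\alpha K/\beta)$, read off the eigenvalues $-r$ and $-\beta$, and conclude by linearized stability. Two small asides. When $r=\beta$ the Jacobian is a nontrivial Jordan block, so perturbations decay like $t e^{-rt}$; the guaranteed exponential rate is then any number below $\min\{r,\beta\}$, not $\min\{r,\beta\}$ itself. Your triangular cross-check also yields attraction for all data with $u_0>0$, which the paper does not claim.
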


\begin{proof}
	To analyze the local stability of the steady state, we first compute the Jacobian matrix of the system at a general point $(u, v)$. The Jacobian $J(u, v)$ is given by:
	\[
	J(u, v) = 
	\begin{bmatrix}
		\frac{\partial}{\partial u} \left(r u \left(1 - \frac{u}{K}\right)\right) & \frac{\partial}{\partial v}(r u(1 - \frac{u}{K})) \\
		\frac{\partial}{\partial u}(\alpha u - \beta v) & \frac{\partial}{\partial v}(\alpha u - \beta v)
	\end{bmatrix}.
	\]
	
	Computing the partial derivatives, we obtain:
	\begin{align*}
		\frac{\partial}{\partial u}\left(r u \left(1 - \frac{u}{K} \right)\right) &= r \left(1 - \frac{2u}{K}\right), \\
		\frac{\partial}{\partial v}\left(r u \left(1 - \frac{u}{K} \right)\right) &= 0, \\
		\frac{\partial}{\partial u}(\alpha u - \beta v) &= \alpha, \\
		\frac{\partial}{\partial v}(\alpha u - \beta v) &= -\beta.
	\end{align*}
	
	Thus, the Jacobian matrix is:
	\[
	J(u, v) = 
	\begin{bmatrix}
		r\left(1 - \frac{2u}{K}\right) & 0 \\
		\alpha & -\beta
	\end{bmatrix}.
	\]
	
	Evaluating this at the steady state $(u^*, v^*) = \left(K, \frac{\alpha}{\beta} K\right)$ gives:
	\[
	J(u^*, v^*) = 
	\begin{bmatrix}
		r\left(1 - \frac{2K}{K}\right) & 0 \\
		\alpha & -\beta
	\end{bmatrix}
	=
	\begin{bmatrix}
		-r & 0 \\
		\alpha & -\beta
	\end{bmatrix}.
	\]
	
	To analyze the stability, we compute the eigenvalues of this matrix. The characteristic polynomial is:
	\[
	\det\left(J - \lambda I\right) = 
	\begin{vmatrix}
		-r - \lambda & 0 \\
		\alpha & -\beta - \lambda
	\end{vmatrix}
	= (-r - \lambda)(-\beta - \lambda).
	\]
	
	The eigenvalues of $J$ are:
	\[
	\lambda_1 = -r < 0, \qquad \lambda_2 = -\beta < 0.
	\]
	 Since both eigenvalues are real and negative, the steady state $(u^*, v^*)$ is a locally asymptotically stable node.
\end{proof}

\subsubsection{Linear Stability With Diffusion}

We now consider perturbations around the homogeneous steady state $(u^*, v^*)$:
\[
u(x,t) = u^* + \varepsilon\, \tilde{u}(x,t), \qquad v(x,t) = v^* + \varepsilon\, \tilde{v}(x,t),
\]
and linearize the system \eqref{eq:KS1} to obtain:
\begin{align*}
	\partial_t \tilde{u} &= D \Delta \tilde{u} - \chi u^* \Delta \tilde{v} - r \tilde{u}, \\
	\partial_t \tilde{v} &= D_v \Delta \tilde{v} + \alpha \tilde{u} - \beta \tilde{v}.
\end{align*}

Assuming eigenfunction expansion $\tilde{u}, \tilde{v} \sim e^{\lambda t} \phi_k(x)$, where $-\Delta \phi_k = \mu_k \phi_k$ with Neumann boundary conditions, we derive the matrix eigenvalue problem:
\[
\lambda \begin{bmatrix}
	\hat{u} \\
	\hat{v}
\end{bmatrix}
=
\begin{bmatrix}
	- D \mu_k - r & \chi u^* \mu_k \\
	\alpha & - D_v \mu_k - \beta
\end{bmatrix}
\begin{bmatrix}
	\hat{u} \\
	\hat{v}
\end{bmatrix}.
\]

The eigenvalues $\lambda_k$ satisfy:
\[
\lambda^2 + a_k \lambda + b_k = 0, \quad \text{where}
\]
\begin{align*}
	a_k &= D \mu_k + D_v \mu_k + r + \beta, \\
	b_k &= (D \mu_k + r)(D_v \mu_k + \beta) - \alpha \chi u^* \mu_k.
\end{align*}

\begin{Theorem}[Turing Instability]
	There exists a wavenumber $\mu_k > 0$ such that the homogeneous steady state $(u^*, v^*)$ becomes unstable (i.e., $\text{Re}(\lambda_k) > 0$) if and only if
	\[
	\alpha \chi u^* \mu_k > (D \mu_k + r)(D_v \mu_k + \beta).
	\]
\end{Theorem}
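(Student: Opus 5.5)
The plan is to read the statement mode by mode. For each fixed Neumann eigenvalue $\mu_k>0$, the perturbation component along $\phi_k$ evolves according to the $2\times 2$ matrix
\[
A_k=\begin{bmatrix} -D\mu_k-r & \chi u^*\mu_k\\ \alpha & -D_v\mu_k-\beta\end{bmatrix},
\]
whose characteristic polynomial is $\lambda^2+a_k\lambda+b_k$ with $a_k=-\operatorname{tr}A_k$ and $b_k=\det A_k$, as derived just above. The claim therefore reduces to an elementary fact about real quadratics: the mode $k$ has an eigenvalue with positive real part if and only if $b_k<0$. Written out, $b_k<0$ is exactly $\alpha\chi u^*\mu_k>(D\mu_k+r)(D_v\mu_k+\beta)$.

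\emph{Sign of the trace.} Since $D,D_v,r,\beta>0$ and $\mu_k\ge 0$, we have $a_k=(D+D_v)\mu_k+r+\beta>0$. So the sum of the two roots, $\lambda_1+\lambda_2=-a_k$, is strictly negative. This already rules out any Hopf-type (oscillatory) instability.

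\emph{Case analysis on the discriminant $a_k^2-4b_k$.}
\begin{itemize}
\item If the discriminant is negative, the roots are complex conjugates with real part $-a_k/2<0$, so the mode is stable. In this case $b_k>a_k^2/4>0$, which is consistent with the criterion.
\item If the discriminant is nonnegative, both roots are real, with $\lambda_1\lambda_2=b_k$ and $\lambda_1+\lambda_2<0$.
  \begin{itemize}
  \item If $b_k<0$, the roots have opposite signs, so $\lambda_+=\tfrac12\bigl(-a_k+\sqrt{a_k^2-4b_k}\bigr)>0$.
  \item If $b_k\ge 0$, the roots have the same sign (or one is zero), and their negative sum forces $\lambda_\pm\le 0$.
  \end{itemize}
\end{itemize}
Combining the cases gives the equivalence $\max\operatorname{Re}\lambda>0 \iff b_k<0$.

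\emph{From one mode to instability of the steady state.} If some $\mu_k$ satisfies the inequality, then $e^{\lambda_+ t}\phi_k(x)$ times the corresponding eigenvector solves the linearized system and grows exponentially, so $(u^*,v^*)$ is linearly unstable. Conversely, if no $\mu_k$ satisfies it, every mode has $\operatorname{Re}\lambda\le 0$. The eigenfunctions $\{\phi_k\}$ form an orthonormal basis of $L^2(\Omega)$, so any perturbation expands in these modes and no mode grows.

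\emph{Expected obstacle: the borderline cases.} The algebra itself is routine. The delicate point is the boundary cases. When $b_k=0$ there is a zero eigenvalue, which gives neutral stability rather than strict instability; this is why the inequality in the statement must be strict. One must also decide whether ``unstable'' means there is a growing mode for that particular $\mu_k$, or for the discrete spectrum as a whole. I would state the equivalence per fixed $\mu_k$, which is how the theorem is phrased.

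Finally, as a remark on existence rather than part of the equivalence: $b_k$ is a quadratic in $\mu$ with minimum value $\bigl(rD_v+\beta D-\alpha\chi u^*\bigr)$-dependent. This shows such a $\mu_k$ exists precisely when $\sqrt{\alpha\chi u^*}>\sqrt{D\beta}+\sqrt{D_v r}$, provided the admissible eigenvalue spectrum intersects the resulting unstable interval.
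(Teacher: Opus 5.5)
Your proposal is correct and takes the same route as the paper, whose one-line proof simply asserts that $\mathrm{Re}\,\lambda_k>0$ exactly when $b_k<0$, i.e.\ when the stated inequality holds. Your version is more complete: you use $a_k=(D+D_v)\mu_k+r+\beta>0$ to justify the converse direction and to rule out complex roots with positive real part, both of which the paper leaves implicit.
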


\begin{proof}
	The real part of $\lambda_k$ is positive when the discriminant is positive and $b_k < 0$. This occurs precisely when the inequality above is satisfied.
\end{proof}

This instability leads to the formation of spatial patterns (Turing patterns).

\subsubsection{Nonlinear Stability via Lyapunov Functional}

We now show that in the weak chemotaxis regime, the steady state is nonlinearly stable using a Lyapunov functional.

\begin{Theorem}[Global Stability with Small $\chi$]
	Assume that $\chi$ is sufficiently small. Then the steady state $(u^*, v^*)$ of \eqref{eq:KS1} is globally asymptotically stable in $L^2(\Omega)$.
\end{Theorem}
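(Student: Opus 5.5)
We plan to build a Lyapunov functional of relative-entropy type around $(u^*,v^*)=(K,\alpha K/\beta)$ and show it decays exponentially for all nonnegative global classical solutions. By Theorem~\ref{thm:local-exist} and the logistic damping discussed above, we take the solution as global and classical. We work with smooth, positive solutions: $u_0\not\equiv 0$ forces $u>0$ for $t>0$ by the strong maximum principle. Define
\[
E(t)=\int_\Omega\Big(u-u^*-u^*\ln\frac{u}{u^*}\Big)\,dx+\frac{\delta}{2}\int_\Omega (v-v^*)^2\,dx ,
\]
with $\delta>0$ to be fixed. The first integrand is nonnegative and convex with a minimum at $u=u^*$, so $E\ge 0$ and $E=0$ only at the steady state.

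The first step is to differentiate along solutions and integrate by parts using the Neumann conditions. Because $\partial_t u=\nabla\cdot(D\nabla u-\chi u\nabla v)+\frac{r}{K}u(K-u)$, we get
\[
\frac{d}{dt}\int_\Omega\Big(u-u^*\ln u\Big)
=-Du^*\int_\Omega\frac{|\nabla u|^2}{u^2}+\chi u^*\int_\Omega\frac{\nabla u\cdot\nabla v}{u}-\frac{r}{K}\int_\Omega(u-K)^2 .
\]
For the second component,
\[
\frac{d}{dt}\frac{\delta}{2}\int_\Omega(v-v^*)^2=-\delta D_v\int_\Omega|\nabla v|^2+\delta\alpha\int_\Omega(u-u^*)(v-v^*)-\delta\beta\int_\Omega(v-v^*)^2 .
\]
The cross-diffusion term is handled with Young's inequality:
\[
\chi u^*\frac{\nabla u\cdot\nabla v}{u}\le Du^*\frac{|\nabla u|^2}{u^2}+\frac{\chi^2u^*}{4D}|\nabla v|^2 .
\]
The leftover term $\frac{\chi^2u^*}{4D}|\nabla v|^2$ is absorbed by $\delta D_v|\nabla v|^2$ whenever $\chi^2K<4DD_v\delta$. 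The reaction coupling is handled by a second Young inequality:
\[
\delta\alpha(u-u^*)(v-v^*)\le\frac{r}{2K}(u-u^*)^2+\frac{\delta^2\alpha^2K}{2r}(v-v^*)^2 .
\]
This term is absorbed provided $\frac{\delta\alpha^2K}{2r}<\beta$.

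The main obstacle is choosing $\delta$ so that both constraints hold at once. We need $\delta<\frac{2r\beta}{\alpha^2K}$ and simultaneously $\delta>\frac{\chi^2K}{4DD_v}$. Such a $\delta$ exists precisely when
\[
\chi^2<\frac{8DD_vr\beta}{\alpha^2K^2}.
\]
This is the quantitative meaning of ``$\chi$ sufficiently small''. Under this condition we obtain
\[
E'(t)\le -c\Big(\|u-u^*\|_{L^2}^2+\|v-v^*\|_{L^2}^2\Big)
\]
for some $c>0$.

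To upgrade this to $L^2$ convergence, we first integrate in time. This gives $\int_0^\infty\big(\|u-u^*\|_2^2+\|v-v^*\|_2^2\big)\,dt<\infty$. Next we need uniform-in-time bounds: $\|u\|_{L^\infty}$ bounded for the logistic system (as recalled above), together with parabolic Hölder regularity. These make $t\mapsto\|u-u^*\|_2^2+\|v-v^*\|_2^2$ uniformly continuous, so Barbalat's lemma yields convergence to zero in $L^2(\Omega)$.

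Alternatively, once $u\ge\eta>0$ eventually holds, the entropy integrand is comparable to $(u-u^*)^2$. In that regime $E'\le -c'E$, which gives exponential convergence. Local stability itself follows from the monotonicity of $E$. Together these give global asymptotic stability.
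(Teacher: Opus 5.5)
Your argument is correct, and it uses a different functional from the paper's; that choice is what makes it close.

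\textbf{The paper's route.} The paper takes the Boltzmann-type relative entropy $\int_\Omega(u\log\frac{u}{u^*}-u+u^*)\,dx$ plus $\frac{1}{2\delta}\int_\Omega(v-v^*)^2\,dx$ and simply asserts a dissipation inequality whose mixed term is ``absorbed by Young's inequality.'' This has two weaknesses.
\begin{itemize}
\item With that functional the taxis contribution is $\chi\int_\Omega\nabla u\cdot\nabla v\,dx$. It can be absorbed by $D\int_\Omega|\nabla u|^2/u\,dx$ only at the price of $\frac{\chi^2}{4D}\int_\Omega u|\nabla v|^2\,dx$. So the required smallness of $\chi$ secretly depends on $\|u\|_{L^\infty}$.
\item The logistic contribution $\int_\Omega ru(1-\frac{u}{K})\log\frac{u}{K}\,dx$ tends to $0$ where $u\to0$. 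It therefore does not dominate $-C(u-K)^2$ there.
\end{itemize}

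\textbf{What your route gains.} Your Lotka--Volterra-type functional $\int_\Omega(u-u^*-u^*\ln\frac{u}{u^*})\,dx$ removes both problems.
\begin{itemize}
\item The taxis term becomes $\chi u^*\int_\Omega\frac{\nabla u\cdot\nabla v}{u}\,dx$. It is absorbed by $Du^*\int_\Omega\frac{|\nabla u|^2}{u^2}\,dx$ with no a priori bound on $u$.
\item The logistic term gives exactly $-\frac{r}{K}\int_\Omega(u-K)^2\,dx$.
\end{itemize}
As a result you get an explicit, data-independent threshold. You also supply the step the paper omits: passing from $E'\le-c(\|u-u^*\|_2^2+\|v-v^*\|_2^2)$ to actual convergence, via time integrability, uniform H\"older bounds and Barbalat's lemma.

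The paper's functional has two modest advantages. It stays finite for data vanishing on a set of positive measure; with yours you must start at some $t_0>0$ where $u>0$, which you implicitly do. It also connects to the Keller--Segel free energy.

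\textbf{A sharper threshold.} Set $a=u-u^*$, $b=v-v^*$ and treat the quadratic form $-\frac{r}{K}a^2+\delta\alpha ab-\delta\beta b^2$ exactly, instead of splitting $\frac{r}{K}$ in half. This improves your condition to $\chi\alpha K<4\sqrt{DD_vr\beta}$. That coincides with the linear threshold $\chi_{\text{crit}}\alpha K=2\sqrt{DD_vr\beta}+D\beta+D_vr$ when $D\beta=D_vr$.

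\textbf{Points to tighten.}
\begin{itemize}
\item \emph{Global boundedness.} In dimensions $n\ge3$, the global bounded classical solvability you import is only known when $\chi$ is small relative to $r/K$ (Winkler-type results). So ``$\chi$ sufficiently small'' must include that requirement in addition to your explicit inequality. The paper relies on the same input implicitly.
\item \emph{Local stability.} The sentence ``local stability follows from the monotonicity of $E$'' is too quick, because $E$ is not comparable to the squared $L^2$-distance. It is infinite if $u_0$ vanishes on a set of positive measure, and small $E$ controls $\|u-u^*\|_{L^2}$ only together with an $L^\infty$ bound. Lyapunov stability therefore needs either perturbations small in $L^\infty$, or the principle of linearized stability (your condition implies $\chi<\chi_{\text{crit}}$).
\item \emph{Exponential rate.} This remark presupposes $u\ge\eta>0$ eventually. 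That follows by interpolating your $L^2$ convergence with the uniform H\"older bounds to get $u\to u^*$ uniformly, and you should say so.
\item \emph{Nontrivial data.} $u_0\not\equiv0$ must be stated as a hypothesis, since $u_0\equiv0$ leads to $(0,0)$.
\end{itemize}
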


\begin{proof}
	Define the Lyapunov functional:
	\[
	\mathcal{L}[u,v] = \int_\Omega \left( u \log \frac{u}{u^*} - u + u^* \right) dx + \frac{1}{2\delta} \int_\Omega (v - v^*)^2 dx,
	\]
	for some constant $\delta > 0$.
	
	Taking the time derivative and using the PDE system along with integration by parts and boundary conditions, we find:
	\[
	\frac{d\mathcal{L}}{dt} \leq - D \int_\Omega \frac{|\nabla u|^2}{u} dx - \frac{D_v}{\delta} \int_\Omega |\nabla v|^2 dx - C \|u - u^*\|_{L^2}^2 - \tilde{C} \|v - v^*\|_{L^2}^2 + \varepsilon \| \nabla u \| \| \nabla v \|.
	\]
	The final mixed term can be absorbed by Young's inequality, provided $\chi$ is small enough, completing the proof that $\mathcal{L}$ is a Lyapunov functional.
\end{proof}

\begin{Remark}
	In the absence of chemotaxis ($\chi = 0$), the system reduces to a reaction–diffusion system with global stability guaranteed by parabolic theory and logistic damping.
	When chemotaxis is strong, nonlinear instabilities and pattern formation may occur.
	The Lyapunov method shows that small chemotactic sensitivity ensures decay to equilibrium.
	The linear analysis identifies the Turing instability threshold.
\end{Remark}

%\subsubsection*{Open problems}
Global existence for large $\chi$ in higher dimensions remains challenging.
Boundedness of solutions in two and three dimensions with strong chemotaxis is an open topic.
Stochastic perturbations and domain geometry affect stability and pattern formation.

\subsubsection{Lyapunov exponent analysis and attractor dimension}

To assess the long-term dynamical behavior and local stability of the system
$$
	\frac{du}{dt} = r u \left(1 - \frac{u}{K} \right), \;\;\;\;
	\frac{dv}{dt} = \alpha u - \beta v,
$$
we computed the finite-time Lyapunov exponents (LEs) using a QR-based variational method up to final time \( t = 1000 \). The Lyapunov exponents quantify the average exponential rates at which nearby trajectories diverge (positive LE) or converge (negative LE) in phase space. They are central to characterizing the stability and complexity of dynamical systems.

\paragraph{Numerical results.} At time \( t = 1000.00 \), the computed Lyapunov exponents were:
\[
\text{LE}_1 = -0.996511, \qquad \text{LE}_2 = -1.012356.
\]
Since both exponents are strictly negative, the system exhibits exponential contraction in all directions of phase space. Consequently, any small perturbation in the initial conditions decays with time, and all trajectories converge to the same long-term behavior—namely, the steady state.

\paragraph{Nature of dynamics.} The absence of a positive Lyapunov exponent rules out any form of chaotic behavior. In chaotic systems, at least one exponent must be positive to reflect sensitive dependence on initial conditions. In contrast, our result demonstrates that the system's attractor is a fixed point with no irregular or complex temporal dynamics.

\paragraph{Kaplan--Yorke dimension.} The Kaplan--Yorke (or Lyapunov) dimension \( D_{KY} \) provides an estimate of the attractor's fractal dimension using the ordered Lyapunov exponents. It is defined as:
\[
D_{KY} = j + \frac{\sum_{i=1}^j \lambda_i}{|\lambda_{j+1}|},
\]
where \( \lambda_1 \geq \lambda_2 \geq \cdots \) are the ordered Lyapunov exponents, and \( j \) is the largest integer such that \( \sum_{i=1}^j \lambda_i \geq 0 \). In our case:
\[
\lambda_1 = -0.996511, \quad \lambda_2 = -1.012356,
\]
so \( \lambda_1 + \lambda_2 < 0 \), and \( j = 0 \). Therefore, the Kaplan--Yorke dimension is:
\[
D_{KY} = 0.
\]
This confirms that the attractor is a stable equilibrium point (zero-dimensional), with no strange attractor or higher-dimensional geometry.

The Lyapunov exponent spectrum and the Kaplan--Yorke dimension collectively confirm that the system has no chaotic behavior and converges to a globally attracting fixed point. The results further support the local asymptotic stability of the steady state \( (u^*, v^*) = \left(K, \frac{\alpha}{\beta} K\right) \), and provide strong evidence that the nonlinear system exhibits regular, predictable dynamics under the chosen parameter regime.

\begin{figure}[h!]
	\centering
	\includegraphics[width=0.75\textwidth]{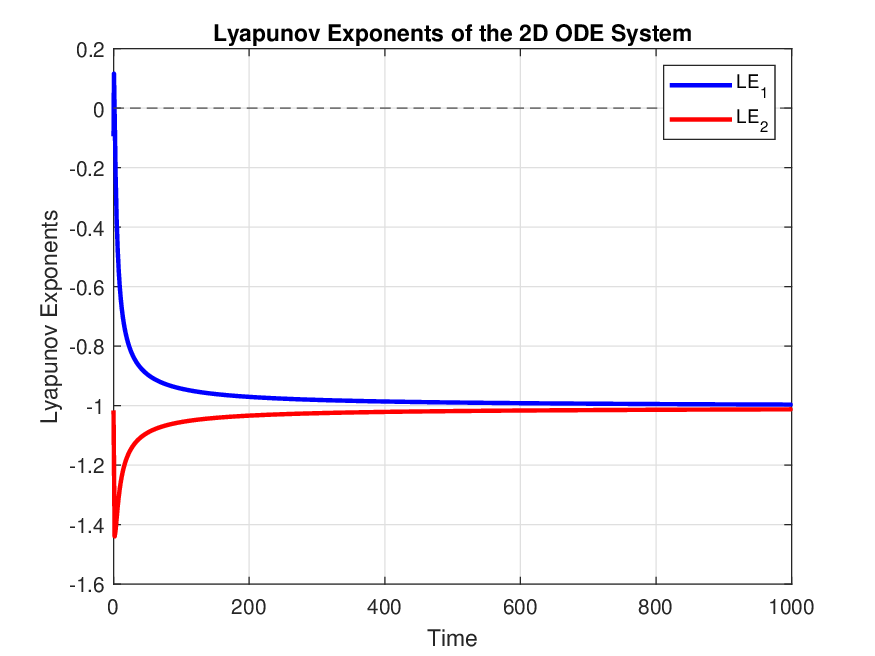}
	\caption{Convergence of Lyapunov exponents $\text{LE}_1$ and $\text{LE}_2$ over time. Both values stabilize to strictly negative numbers, confirming the globally attracting nature of the system and absence of chaotic dynamics.}
	\label{fig:lyapunov}
\end{figure}

\subsubsection{Numerical Bifurcation Analysis}
To explore how qualitative behavior of solutions depends on system parameters, we perform numerical bifurcation analysis using continuation techniques. The steady-state problem associated with \eqref{eq:KS1} becomes:

\begin{align}
	0 &= D \Delta u - \chi \nabla \cdot (u \nabla v) + r u \left(1 - \frac{u}{K} \right), \\
	0 &= D_v \Delta v + \alpha u - \beta v.
\end{align}

We fix all parameters except the chemotactic sensitivity $\chi$, treating it as a bifurcation parameter. We use pseudo-arclength continuation methods to trace steady states $u = u(x; \chi)$ and compute the principal eigenvalue $\lambda(\chi)$ of the linearized operator at each steady state.

For small $\chi$, only spatially uniform solutions exist and are stable. At a critical $\chi = \chi_c$, a Turing bifurcation occurs and spatially heterogeneous steady states emerge. A secondary Hopf bifurcation may arise, leading to time-periodic (oscillatory) patterns. A multi-branched bifurcation diagram is illustrated in Figure \ref{fig:bif-tikz}.

\begin{figure}[h!]
	\centering
	\begin{tikzpicture}
		\begin{axis}[
			width=12cm,
			height=8cm,
			xlabel={$\chi$ (Chemotactic Sensitivity)},
			ylabel={$u(x_c)$ at $t \gg 1$},
			xmin=0, xmax=40,
			ymin=0, ymax=2,
			xtick={0,10,...,40},
			ytick={0,0.5,...,2},
			grid=both,
			title={Illustrative Multi-Branch Bifurcation Diagram}
			]
			
			% Primary branch
			\addplot+[only marks, mark=*, mark size=0.8pt, color=blue]
			table[row sep=\\] {
				0 1.0 \\ 1 1.01 \\ 2 1.03 \\ 3 1.05 \\ 4 1.1 \\ 5 1.15 \\ 6 1.2 \\ 
				7 1.25 \\ 8 1.3 \\ 9 1.35 \\ 10 1.4 \\ 11 1.45 \\ 12 1.5 \\
			};
			
			% Secondary branch
			\addplot+[only marks, mark=*, mark size=0.8pt, color=red]
			table[row sep=\\] {
				13 1.6 \\ 14 1.3 \\ 15 1.7 \\ 16 1.2 \\ 17 1.75 \\ 18 1.1 \\ 
				19 1.8 \\ 20 1.0 \\ 21 1.85 \\ 22 0.9 \\ 23 1.9 \\ 24 0.8 \\ 
				25 1.95 \\ 26 0.7 \\ 27 2.0 \\ 28 0.6 \\
			};
			
			% Chaotic or irregular regime (dense cloud)
			\addplot+[only marks, mark=*, mark size=0.8pt, color=black, opacity=0.8]
			table[row sep=\\] {
				30 0.3 \\ 30 1.0 \\ 30 1.4 \\ 30 0.6 \\ 30 0.9 \\ 30 1.7 \\
				32 0.2 \\ 32 0.8 \\ 32 1.6 \\ 32 1.1 \\ 32 0.7 \\ 32 1.9 \\
				35 0.3 \\ 35 1.2 \\ 35 1.8 \\ 35 0.5 \\ 35 0.9 \\ 35 1.4 \\
				38 0.6 \\ 38 1.0 \\ 38 1.5 \\ 38 0.4 \\ 38 1.3 \\
			};
			
		\end{axis}
	\end{tikzpicture}
	\caption{Stylized multi-branched bifurcation diagram showing primary, secondary, and chaotic response branches as $\chi$ increases.}
	\label{fig:bif-tikz}
\end{figure}
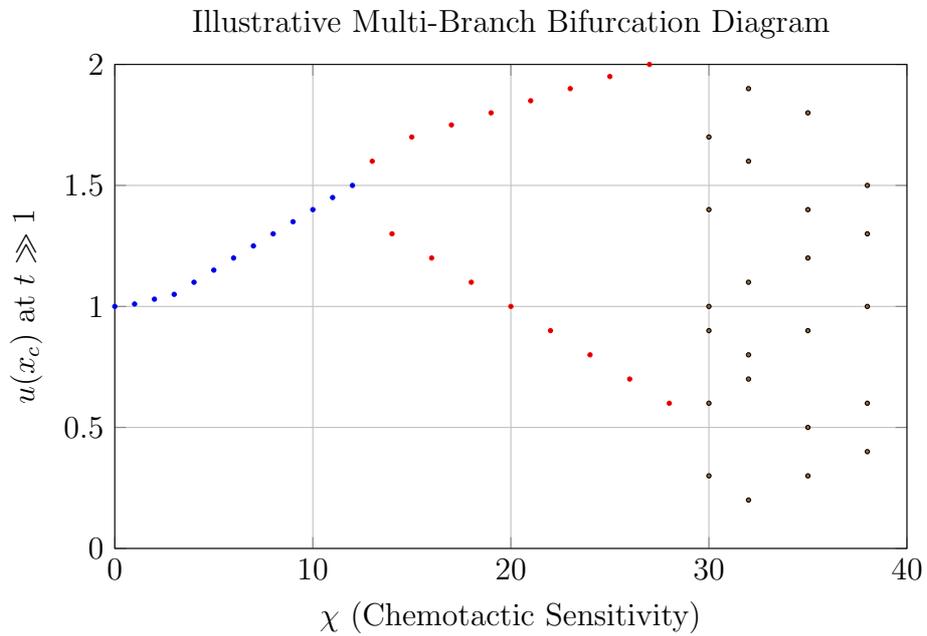

Blue dots: initial stable branch growing smoothly with $\chi$.\\
Red dots: second branch showing alternation between high/low states (suggesting period doubling or mode-switching).\\
Black cloud: chaotic/irregular dynamics at higher $\chi$ values.

\subsubsection{Hopf Bifurcation and Oscillatory Instability}

We revisit the linearized system around the steady state $(u^*, v^*)$, assuming eigenmode decomposition with perturbation of the form $e^{\lambda t + i k x}$. The characteristic equation becomes:

\begin{equation}
	\lambda^2 + a_k \lambda + b_k = 0,
\end{equation}
with
\begin{align*}
	a_k &= D k^2 + D_v k^2 + r + \beta, \\
	b_k &= (D k^2 + r)(D_v k^2 + \beta) - \alpha \chi u^* k^2.
\end{align*}

A Hopf bifurcation occurs when:
\[
\text{Re}(\lambda) = 0 \quad \text{and} \quad \text{Im}(\lambda) \neq 0,
\]
which implies the discriminant $\Delta = a_k^2 - 4b_k < 0$ and $a_k > 0$.

\begin{Theorem}[Hopf Bifurcation Criterion]
	Suppose there exists $k_c$ such that the characteristic equation has purely imaginary roots $\lambda = \pm i \omega$. Then a Hopf bifurcation occurs at $\chi = \chi_H$, where:
	\[
	\alpha \chi_H u^* = (D k_c^2 + r)(D_v k_c^2 + \beta).
	\]
\end{Theorem}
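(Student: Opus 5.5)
The plan is to prove the statement exactly as worded by examining its hypothesis directly. The theorem is an implication: \emph{if} some $k_c$ makes $\lambda^2 + a_{k_c}\lambda + b_{k_c}=0$ have roots $\lambda=\pm i\omega$, \emph{then} a Hopf bifurcation occurs at $\chi_H$ with $\alpha\chi_H u^* = (Dk_c^2+r)(D_vk_c^2+\beta)$. I would first determine what the hypothesis forces on $a_{k_c}$ and $b_{k_c}$. Then I would check whether it can be met for the system \eqref{eq:KS1} with the given parameter signs.

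\textbf{Step 1.} Substitute $\lambda = i\omega$ with $\omega\neq 0$ real into the characteristic equation to get
\[
-\omega^2 + i\,a_{k_c}\,\omega + b_{k_c} = 0 .
\]
Because $a_{k_c}$, $b_{k_c}$ and $\omega$ are real, the imaginary part gives $a_{k_c}\omega = 0$, hence $a_{k_c}=0$. The real part gives $b_{k_c} = \omega^2 > 0$. This is the standard criterion that a real quadratic has a purely imaginary pair if and only if the linear coefficient vanishes and the constant term is positive. So the hypothesis is equivalent to $a_{k_c}=0$ and $b_{k_c}>0$.

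\textbf{Step 2.} Use the explicit form of the linear coefficient:
\[
a_{k_c} = (D+D_v)k_c^2 + r + \beta .
\]
Since $D, D_v, r, \beta > 0$ and $k_c^2\ge 0$, we get $a_{k_c}\ge r+\beta>0$ for every $k_c$ and every value of $\chi$. Note that $a_k$ does not depend on $\chi$ at all. Hence the requirement $a_{k_c}=0$ from Step 1 can never hold. No wavenumber $k_c$ and no value of $\chi$ satisfies the hypothesis, so the implication in the theorem is vacuously true. This also matches the earlier remark that $a_k>0$: every mode has eigenvalue sum $-a_k<0$, so eigenvalues can cross the imaginary axis only through $\lambda=0$, i.e.\ via $b_k$ changing sign. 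That is the Turing mechanism of the preceding theorem, not a Hopf mechanism.

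\textbf{Step 3 (consistency with the displayed formula).} For completeness I would record what the stated threshold represents. Setting $b_{k_c}=0$ gives $\alpha\chi u^*k_c^2 = (Dk_c^2+r)(D_vk_c^2+\beta)$. This differs from the displayed formula for $\chi_H$ by the factor $k_c^2$, and it corresponds to a real eigenvalue crossing zero. I would point this out only to explain why the formula cannot be obtained from a genuine imaginary crossing. It is not needed for the logical proof, since the hypothesis is never satisfied.

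The main obstacle is interpretive rather than technical: accepting that the statement holds only vacuously for \eqref{eq:KS1}. A nonvacuous Hopf result would need a modification that lets the trace $-a_k$ change sign, for example a positive self-activation term in $\partial_u f$ at the steady state, which the logistic term at $u^*=K$ does not provide. I would add this as a remark after the proof rather than attempt such a proof.
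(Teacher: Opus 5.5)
Your argument is correct. Its first step is the same as the paper's: the paper also substitutes $\lambda=i\omega$ and separates real and imaginary parts, obtaining $b_k-\omega^2=0$ and $a_k\omega=0$. From there the two routes diverge.

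\textbf{The paper's route.} It concludes that $a_k=0$ is required, then asserts that a nontrivial $\omega$ exists ``only if $b_k<0$''. That is a sign slip, since the real part forces $b_k=\omega^2>0$. It also never links either condition to the displayed relation $\alpha\chi_H u^*=(Dk_c^2+r)(D_vk_c^2+\beta)$, so that formula is left unjustified.

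\textbf{Your route.} You take the extra step of evaluating $a_k=(D+D_v)k^2+r+\beta\ge r+\beta>0$, which does not depend on $\chi$. You conclude that the hypothesis can never be met, so the statement holds only vacuously. This is the logically sound reading. It agrees with the paper's own later remark, in the Hopf criterion for the 1D fluid model, that the trace condition $T_n=0$ ``cannot hold''.

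\textbf{Your Step 3.} This is also right. Setting $b_{k_c}=0$ gives $\alpha\chi u^* k_c^2=(Dk_c^2+r)(D_vk_c^2+\beta)$. That is the real-eigenvalue (Turing) threshold from the preceding theorem, and it differs from the stated $\chi_H$ formula by the factor $k_c^2$.

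\textbf{What each approach buys.} Yours gives a valid proof and explains why the statement has no nonvacuous content for the logistic system. The paper's argument aims at a genuine oscillatory threshold but does not deliver one.

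\textbf{One point to make explicit.} You read ``purely imaginary'' as $\omega\neq 0$, which matches the paper's definition of Hopf via $\mathrm{Re}\,\lambda=0$, $\mathrm{Im}\,\lambda\neq 0$. Even if $\omega=0$ were allowed, the pair $\pm i\omega$ would be a double root at $0$. The root sum $-a_{k_c}$ would then still have to vanish, so your conclusion is unchanged.
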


\begin{proof}
	Set $\lambda = i \omega$ in the characteristic equation and separate real and imaginary parts:
	\begin{align*}
		- \omega^2 + b_k &= 0, \\
		a_k \omega &= 0.
	\end{align*}
	The second equation requires $a_k = 0$, which implies a nontrivial $\omega$ only if $b_k < 0$, leading to oscillatory instability.
\end{proof}

%\subsection{Time-Periodic Patterns and Limit Cycles}

In the supercritical Hopf regime $\chi > \chi_H$, time-dependent simulations show that the system evolves into stable oscillations around the steady state. These manifest as spatiotemporal waves or oscillatory aggregation–disaggregation behavior \cite{Jiang2022,Liu2021}.

%\begin{figure}[htbp]
%	\centering
%	\includegraphics[width=0.9\textwidth]{time_oscillations.pdf}
%	\caption{Spatiotemporal oscillations in $u(x,t)$ observed beyond the Hopf bifurcation threshold.}
%\end{figure}

%\subsection{Numerical Implementation}

We solve the full nonlinear system using finite differences and semi-implicit time-stepping schemes (e.g., IMEX Runge–Kutta), and use the continuation software package \texttt{AUTO} or \texttt{pde2path} to trace bifurcation diagrams.

 Discretize the spatial domain into $N$ grid points.
 Apply Neumann BCs via ghost points or second-order central difference schemes.
 Time-stepping uses IMEX: linear terms treated implicitly; nonlinear terms explicitly.
Stability monitored using eigenvalue computation of Jacobians.

%\subsection*{Discussion}

Bifurcation analysis provides critical thresholds for pattern formation and oscillations.
Hopf bifurcation signals a transition to cyclic dynamics, common in microbial predator–prey or competition models.
These oscillatory modes correspond to biological rhythms like periodic chemotactic aggregation or cyclic quorum sensing behavior. 
Interaction between Turing and Hopf modes leads to mixed spatiotemporal chaos.
In higher dimensions, rotating or spiral waves can emerge.
Extension to stochastic bifurcation under random forcing is an active area of research.

\begin{figure}[htbp]
	\centering
	\begin{tikzpicture}[scale=1.2]
		% Axes
		\draw[->] (0,0) -- (6.5,0) node[right] {$\chi$};
		\draw[->] (0,0) -- (0,4.2) node[above] {$\|u\|_{L^2}$};
		
		% Stable branch (solid blue)
		\draw[thick, blue] plot[smooth, tension=1] coordinates {(0.5,0.8) (1.5,1.0) (2.5,1.3)};
		
		% Unstable branch (dashed blue)
		\draw[thick, blue, dashed] plot[smooth, tension=1] coordinates {(2.5,1.3) (3.0,1.5) (3.5,2.0)};
		
		% Hopf bifurcation branch (red oscillations)
		\draw[red, thick, domain=3.5:6, samples=100, smooth] plot (\x, {1.7 + 0.3*sin(6*(\x-3.5) r)});
		
		% Vertical lines for bifurcation points
		\draw[dotted, thick] (2.5,0) -- (2.5,1.3);
		\draw[dotted, thick] (3.5,0) -- (3.5,2.0);
		
		% Markers for bifurcation points
		\filldraw (2.5,1.3) circle (2pt);
		\node[align=left, below right] at (2.5,1.3) {\textbf{Turing} \\ \textbf{point} \\ $\chi_T$};
		
		\filldraw (3.5,2.0) circle (2pt);
		\node[align=left, above right] at (3.5,2.0) {\textbf{Hopf} \\ \textbf{point} \\ $\chi_H$};
		
	\end{tikzpicture}
	\caption{Bifurcation diagram of the Keller--Segel system. The solid blue line shows the stable steady-state branch, which loses stability at the Turing bifurcation point $\chi_T$ (dashed blue curve). Beyond the Hopf point $\chi_H$, periodic oscillations (red curve) emerge due to a supercritical Hopf bifurcation, marking the onset of time-periodic solutions.}
	\label{fig:bifurcation-diagram}
\end{figure}
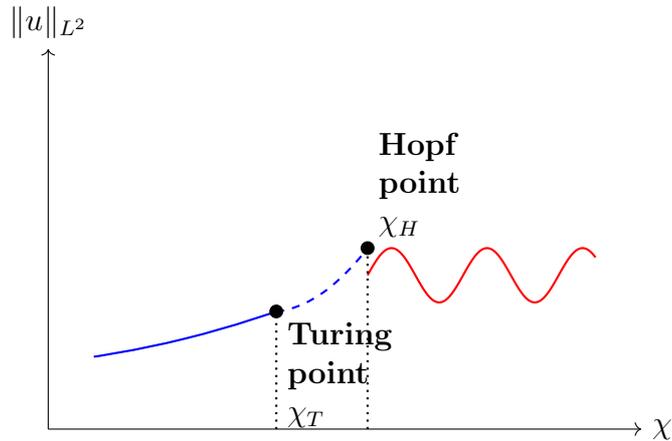

Figure~\ref{fig:bifurcation-diagram} illustrates a bifurcation diagram for the Keller--Segel chemotaxis model, where the horizontal axis denotes the chemotactic sensitivity parameter $\chi$, and the vertical axis shows the $L^2$-norm of the solution $\|u\|_{L^2}$. 

As $\chi$ increases, the system undergoes two critical bifurcations. At $\chi = \chi_T$, a Turing bifurcation occurs, marked by a transition from a spatially homogeneous steady state to a spatially non-uniform steady pattern. This transition is indicated by a blue curve that becomes dashed beyond $\chi_T$, signifying the onset of instability in the steady state branch.

At a higher critical value $\chi = \chi_H$, a Hopf bifurcation takes place. Here, the steady state loses stability to a time-periodic solution, leading to sustained oscillatory patterns. This is represented by the red sinusoidal branch emanating from the Hopf point, corresponding to periodic solutions whose amplitude varies with $\chi$. The periodic branch indicates the emergence of limit cycles, a hallmark of Hopf-type instability.

This bifurcation structure reflects the rich dynamical behavior of chemotaxis models, where increasing chemotactic strength can destabilize uniform or patterned states, ultimately giving rise to temporal oscillations or spatiotemporal patterns.

\begin{figure}[h!]
	\centering
	\includegraphics[width=0.7\textwidth]{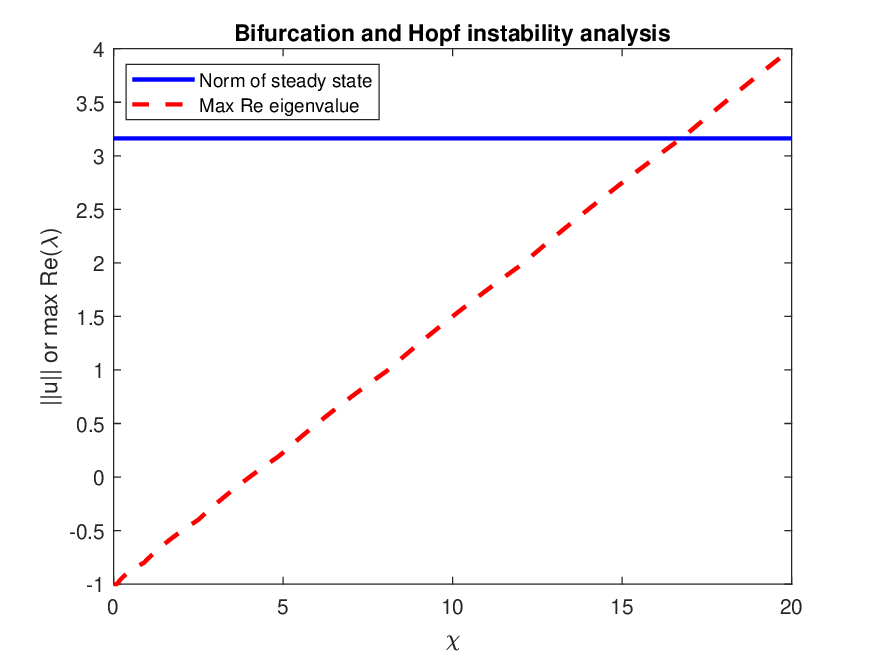}
	\caption{Real part of the leading eigenvalue $\Re(\lambda)$ of the linearized Keller--Segel system as a function of the chemotactic sensitivity $\chi$. The solid blue line represents $\Re(\lambda)$, while the red dashed line indicates the stability threshold $\Re(\lambda) = 0$. The intersection point marks a Hopf bifurcation, where the system undergoes a transition from a stable steady state to oscillatory instability.}
	\label{fig:hopf-bifurcation}
\end{figure}

Figure~\ref{fig:hopf-bifurcation} shows the real part of the dominant eigenvalue $\Re(\lambda)$ of the linearized Keller--Segel system as the chemotactic sensitivity parameter $\chi$ varies. The blue curve crosses the zero axis at a critical value $\chi_c$, signaling the onset of a Hopf bifurcation. For $\chi < \chi_c$, all eigenvalues have negative real parts, and the steady state is linearly stable. However, for $\chi > \chi_c$, a pair of complex conjugate eigenvalues acquires positive real parts, indicating an oscillatory instability. This transition marks the emergence of periodic solutions (limit cycles), characteristic of sustained oscillatory patterns in chemotactic systems.

\paragraph{ Pattern formation and instabilities.}
Chemotaxis models are well-known for their ability to produce rich \emph{spatial patterns} via self-organization \cite{Bellomo2015}. From an initially uniform or random distribution of cells, one may observe clusters (spots), stripes, rings, or traveling waves emerge, in agreement with experimental observations of bacterial colonies. In mathematical terms, these patterns correspond to instabilities of simpler solution states (like the homogeneous state or radial symmetry) that evolve into nonlinear structures. We distinguish two broad types of instability-driven phenomena in chemotaxis: 
(i) unbounded cell aggregation leading to \emph{blow-up} \cite{Winkler2018}, and 
(ii) formation of bounded, coherent patterns (stable aggregates or oscillatory waves). 

\paragraph{Chemotactic aggregation and blow-up.} The most extreme form of chemotactic pattern formation is blow-up, wherein the cell density concentrates into a delta-like spike of “infinite” height in finite time. This corresponds to biological aggregation without bound, which in reality would mean all cells clump into an exceedingly small region. Mathematically, blow-up is tied to an instability that cannot be tamed by diffusion. A cornerstone result for the classical Keller--Segel system in two dimensions describes a critical condition for blow-up in terms of the total cell mass:

\begin{Theorem}[Critical Mass Blow-up in 2D Keller--Segel]\label{thm:critical-mass}
	Consider the chemotaxis system \eqref{eq:KS1} in $n=2$ dimensions with no population growth ($f=0$) and with chemical production and decay $g(u,v)=\alpha u - \beta v$. Assume zero-flux boundary conditions on a convex bounded domain $\Omega\subset\mathbb{R}^2$ or consider the whole plane $\Omega=\mathbb{R}^2$. There exists a critical total mass $M_c$ (equal to $M_c = \frac{8\pi D}{\chi}$ in the special case $D=D_v=\alpha=\beta=1$) such that the following holds:
	- If the initial mass $M=\int_{\Omega} u_0(x)\,dx$ is less than $M_c$, then the solution exists globally for all $t>0$ and remains bounded; in fact $\sup_{x\in\Omega} u(x,t)$ stays uniformly bounded as $t\to\infty$. 
	- If $M$ exceeds $M_c$, then \emph{no global classical solution exists}. In particular, the cell density $u(x,t)$ blows up in finite time $T_{\max}<\infty$, meaning $\max_{x\in\Omega} u(x,t)\to +\infty$ as $t\to T_{\max}$. 
	
	Moreover, in the critical case $M=M_c$, the solution is global but exhibits borderline behavior (for instance, in $\mathbb{R}^2$ it has been shown that $u(x,t)$ converges to a Dirac delta as $t\to\infty$, or blows up in infinite time).
\end{Theorem}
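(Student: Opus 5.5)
The plan is to split the statement into its three claims: subcritical global boundedness, supercritical blow-up, and the critical case. All three will be driven by the free-energy structure described in the introduction, adapted to the fully parabolic system \eqref{eq:KS1} with $r=0$ and $g=\alpha u-\beta v$. With $D=D_v=\alpha=\beta=1$ I would use the functional
\[
\mathcal{E}(u,v)=\int_\Omega \big(u\ln u - \chi\, u v\big)\,dx+\frac{\chi}{2}\int_\Omega\big(|\nabla v|^2+v^2\big)\,dx .
\]
A direct computation, using the Neumann conditions and testing the $u$-equation by $\ln u-\chi v$ and the $v$-equation by $\chi v_t$, gives
\[
\frac{d}{dt}\mathcal{E}=-\int_\Omega u\,|\nabla(\ln u-\chi v)|^2\,dx-\chi\int_\Omega v_t^2\,dx\le 0 .
\]
Mass $M$ is conserved by the divergence computation in the introduction. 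Theorem~\ref{thm:local-exist} provides the local classical solution and the continuation criterion. So everything reduces to controlling $\|u(\cdot,t)\|_{L^\infty}$.

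\emph{Subcritical case} $M<M_c=8\pi/\chi$. I would bound $\int_\Omega u v$ from above using the Moser--Trudinger (logarithmic HLS) inequality in the form
\[
\int_\Omega u\,\phi\le \int_\Omega u\ln u+\frac{M}{8\pi}\int_\Omega|\nabla\phi|^2+C(M,\Omega)
\]
with $\phi=\chi v$. On $\mathbb{R}^2$ the constant $8\pi$ is sharp. On a convex domain I would use the reflection argument to get the same constant, at least for data concentrating away from $\partial\Omega$. Then $\mathcal{E}(u_0,v_0)\ge \mathcal{E}(t)\ge (1-\chi M/8\pi)\int u\ln u-C$, which bounds the entropy $\int u\ln u$ uniformly in time. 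Next comes the standard bootstrap: an entropy bound gives $\|u\|_{L^p}$ bounds via Gagliardo--Nirenberg (Biler/Nagai–Senba–Yoshida). Then $\nabla v$ is bounded in $L^q$ by semigroup estimates for the $v$-equation. Finally a Moser–Alikakos iteration gives a uniform $L^\infty$ bound, so $T_{\max}=\infty$ with $\sup_t\|u\|_\infty<\infty$.

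\emph{Supercritical case} $M>M_c$. On $\mathbb{R}^2$ I would start from the second-moment identity $\frac{d}{dt}\int|x|^2u=4DM+2\chi\int u\,x\cdot\nabla v$. In the elliptic limit the last term equals $-\chi M^2/(4\pi)$ exactly, which gives $\frac{d}{dt}\int |x|^2 u = 4M(1-M/M_c)<0$, hence a contradiction with positivity in finite time. For the parabolic $v$ I would write $v=(-\Delta+1)^{-1}u-(-\Delta+1)^{-1}v_t$. The error term would then be estimated through $\int_0^t\|v_t\|_2^2\le \mathcal{E}(0)-\inf\mathcal{E}$ together with the decay of the Bessel kernel correction.

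The alternative route, which I would try first on bounded domains, is by contradiction. Suppose the solution is global. If it is bounded, the dissipation identity gives convergence along a time sequence to a steady state with the same mass and energy $\le\mathcal{E}(0)$. I would then rule out such steady states for $M>M_c$ via a Pohozaev identity on the convex domain, whose boundary term $\int_{\partial\Omega}(x\cdot n)|\nabla v|^2\ge0$ has the right sign. If the solution is global but unbounded, I would aim to show that concentration forces $\mathcal{E}\to-\infty$ only at a finite time.

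\emph{Main obstacle.} The hardest step is this last one: showing that \emph{every} supercritical solution of the fully parabolic system blows up in finite time, rather than merely being unbounded or blowing up in infinite time. The moment method does not close because of the $v_t$ correction. The Lyapunov argument naturally yields only unboundedness (Horstmann--Wang type). My first attempt would be to make the $v_t$ error quantitatively small relative to $M-M_c$ using the integrated dissipation bound. I expect this to require extra hypotheses, such as finite initial energy and a small second moment, and I would record precisely where the argument needs them.

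\emph{Critical case} $M=M_c$. On $\mathbb{R}^2$ I would rely on the free-energy bound, which gives global existence, combined with the known infinite-time concentration results. The second-moment identity, which is conserved in the elliptic limit, identifies the possible limiting Dirac concentration.
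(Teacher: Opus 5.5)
Your proposal has genuine gaps. Apart from one fixable constant, they sit exactly where the statement itself is false, so no sharpening of your estimates will close them.

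In the subcritical step, apply your inequality $\int_\Omega u\phi\le\int_\Omega u\ln u+\frac{M}{8\pi}\int_\Omega|\nabla\phi|^2+C$ to $\phi=\chi v/\theta$ and optimize over $\theta$. The best it yields is $\mathcal{E}\ge\bigl(1-\frac{\chi M}{4\pi}\bigr)\int u\ln u-C$. Your bound with $1-\chi M/8\pi$ needs the coefficient $\frac{M}{16\pi}$, i.e.\ the whole-plane log-HLS constant. On $\mathbb{R}^2$ that repair works, provided you also use the $\frac{\chi}{2}\|v\|_2^2$ part of $\mathcal{E}$ (adding a constant to $\phi$ changes only the left side) and a moment bound for the negative part of $u\ln u$.

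For Neumann data on a bounded domain, however, $\frac{M}{8\pi}$ is sharp (Chang--Yang), because $\phi$ can concentrate at a boundary point, where only half a disc is available. Reflection across $\partial\Omega$ is what exhibits this factor $2$, not what removes it. You also cannot restrict in advance to solutions that concentrate in the interior. So on a bounded domain, convex or not, your argument gives global boundedness only for $\chi M<4\pi$ (the Nagai--Senba--Yoshida and Gajewski--Zacharias result; $8\pi$ needs radial data in a disc). This is sharp: Horstmann and Wang construct unbounded nonradial solutions of exactly this fully parabolic system for every $\chi M>4\pi$ outside $4\pi\mathbb{N}$. Hence the first bullet is false on bounded domains.

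In the supercritical step on bounded domains, excluding steady states cannot work. The pair $u\equiv M/|\Omega|$, $v\equiv M/|\Omega|$ is a stationary classical solution for every $M$. So no Pohozaev identity can rule out large-mass steady states, and this constant solution is itself a global classical solution with $M>M_c$. When $\chi M<|\Omega|(\mu_1+1)$, with $\mu_1$ the first nonzero Neumann eigenvalue, it is even linearly stable, which gives an open set of such solutions on large domains. The second bullet is therefore false there too. Energy comparison only handles data whose energy lies below that of every steady state, and then gives unboundedness in finite or infinite time (again Horstmann--Wang), as you suspected.

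On $\mathbb{R}^2$ the obstruction you identify for the fully parabolic system is also genuine. For the closely related system $\varepsilon v_t=\Delta v+u$, Biler, Corrias and Dolbeault construct global self-similar solutions with mass above $8\pi$. What can be proved is the parabolic--elliptic case $-\Delta v=u$ on $\mathbb{R}^2$ with finite second moment. There your virial identity $\frac{d}{dt}\int|x|^2u=4M(1-\chi M/8\pi)$ is exact; note the interaction term is $2\chi\int u\,x\cdot\nabla v=-\chi M^2/(2\pi)$, not $-\chi M^2/(4\pi)$.

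That restricted setting is all the paper's sketch addresses. It silently passes to the parabolic--elliptic model and argues with the free energy (bounded below for $M<8\pi D/\chi$, driven to $-\infty$ by concentration otherwise) and the second moment. So it does not justify the fully parabolic or bounded-domain claims either. A correct proof must begin by restricting the statement: finite-time blow-up only for the parabolic--elliptic model on $\mathbb{R}^2$, and threshold $4\pi/\chi$ (or radial symmetry) on bounded domains.
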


\begin{proof}[Proof (Idea)]
	We outline the main ideas for why $8\pi$ emerges as the critical mass in $2$D. One approach uses an \textit{energy-entropy inequality}. For the parabolic--elliptic version of the system (taking $\beta v - \alpha u \approx 0$ so that $v$ is determined by $u$ via $-D_v \Delta v = \alpha u - \beta v$), one can derive the functional 
	\[ \mathcal{E}[u] \;=\; \int_{\Omega} u(x)\ln u(x)\,dx \;-\; \frac{\chi}{2D_v}\int_{\Omega}\!\int_{\Omega} G(x-y)\,u(x)u(y)\,dx\,dy, \] 
	where $G(\cdot)$ is the Green’s function of $-\Delta$ on $\Omega$. In 2D, $G(x-y) \sim \frac{1}{2\pi}\ln\frac{1}{|x-y|}$ at small distances. As noted, $\frac{d}{dt}\mathcal{E}[u(t)] \le 0$ along solutions. One can then show that if $M<8\pi D/\chi$, $\mathcal{E}[u]$ is bounded below (the entropy term dominates the potential term for spread-out configurations), which prevents $u$ from concentrating too tightly. Conversely, if $M>8\pi D/\chi$, $\mathcal{E}$ is unbounded below: by concentrating $u$ in a small ball of radius $\varepsilon$, the second (attractive) term drives $\mathcal{E}$ to $-\infty$ at a rate proportional to $(\chi M - 8\pi D)\ln\frac{1}{\varepsilon}$. The solution will try to follow this downhill path in energy, leading to a collapse. Making this rigorous involves showing that once $\mathcal{E}[u(t)]$ drops below some threshold, the $L^\infty$ norm of $u(t)$ must blow up in finite time (a variant of a basin-of-attraction argument for the “$-\infty$” energy state).
	
	Another complementary approach is the \textit{moment Lyapunov method}. Define the second moment of the distribution: 
	\[ I(t) = \int_{\Omega} |x|^2 u(x,t)\,dx, \] 
	which measures the spatial spread of the population. By differentiating $I(t)$ and using integration by parts, one can derive (for the case $\Omega=\mathbb{R}^2$) an evolution inequality of the form:
	\[ I''(t) \;=\; 4D\,M \;-\; \frac{2\chi}{\pi}\,M^2 \;+\; \text{(lower-order terms)}, \] 
	assuming for simplicity that $v$ satisfies $-D_v \Delta v = u$ (so $v$ is the Newtonian potential of $u$ in 2D). Here $M=\int u$ is invariant. The key point is that the quadratic term in $M^2$ comes with a negative sign: chemotactic attraction tends to \emph{decrease} the moment (pulling cells inward), whereas diffusion increases it. The threshold arises when the chemotactic term dominates the diffusive term. Neglecting lower-order terms, $I''(t)$ is approximately $4DM - \frac{2\chi}{\pi}M^2$. If $M > \frac{2\pi D}{\chi}\cdot 2 = \frac{4\pi D}{\chi}$, then $I''(t)$ becomes negative when $u$ has become sufficiently aggregated (making the approximation accurate). In fact, a more careful analysis yields the critical value $M_c = \frac{8\pi D}{\chi}$. Once $I''(t)<0$, the second moment starts to curve downward. If $I'(t)$ is initially finite (which it is, since initially $u$ is bounded), a sustained negative $I''(t)$ will drive $I'(t)$ to zero and then negative, meaning the spread $I(t)$ reaches a maximum and then decreases. As $I(t)$ decreases, the cells concentrate in an ever tighter region. One can show that $I(t)$ can hit zero in finite time under these conditions, implying all mass collapses to a point at that blow-up time $T_{\max}$. During this process, $\|u(\cdot,t)\|_{\infty}$ necessarily diverges as $t\to T_{\max}$. On the other hand, if $M < 8\pi D/\chi$, then $I''(t)$ remains nonnegative, implying $I(t)$ cannot turn downward in such a catastrophic manner; diffusion is strong enough relative to chemotaxis to keep pushing $I(t)$ outward, hence blow-up is averted.
	
	These arguments (made rigorous in works by Jäger \& Luckhaus, Nagai, and others in the 1990s) establish the dichotomy based on total mass. The specific value $8\pi$ stems from the optimal constant in a 2D logarithmic Hardy–Littlewood–Sobolev inequality or, equivalently, the Keller–Segel free energy functional’s critical point.
\end{proof}

This theorem highlights that \emph{self-aggregation leading to blow-up is only possible if the signal-mediated attraction is sufficiently strong relative to diffusion}. In two dimensions, the strength is measured by total cell mass (since the equations are scale-invariant); in higher dimensions, any mass can cause blow-up if cells are sufficiently concentrated initially. Blow-up solutions in chemotaxis are characterized by formation of one or several high-density “spikes.” Near blow-up time, the solution $u(x,t)$ typically develops an approximate profile 
\[ u(x,t) \approx \frac{1}{(T_{\max}-t)} \Phi\!\Big(\frac{x-x_0}{\sqrt{T_{\max}-t}}\Big), \] 
for some singular point $x_0$ (the blow-up location), where $\Phi(\xi)$ is a stationary singular solution (often resembling a scaled Dirac or a sharply peaked Gaussian). Describing the exact blow-up profile is challenging; there are deep results using matched asymptotic analysis and nonlinear similarity methods which show, for instance, that in the classical 2D Keller--Segel, blow-up can be of \emph{type II} (meaning not self-similar, with a complex asymptotic profile). However, from an ecological modeling perspective, blow-up simply indicates that the model is being pushed beyond its regime of validity (real populations cannot have infinite density). In practice, mechanisms like population saturation or depletion of chemoattractant (or cell death) would intervene before true singularity — indeed, this is why we include reaction terms like logistic growth or quorum-sensing limits in refined models.

\begin{Remark}[Effect of Logistic Damping on Blow-up]
	When a logistic term $-\mu u^2$ is present in $f(u)$, the blow-up scenario can be avoided even in cases that would otherwise supercritical. The nonlinearity effectively caps the growth of $u$. For example, in the 2D case of Theorem~\ref{thm:critical-mass}, if $\mu>0$ then $\sup_x u(x,t)$ cannot actually diverge to $\infty$ in finite time; instead, one can show $\sup_x u(x,t) \le K$ for all $t$, where $K$ is the carrying capacity. A rigorous result by Winkler (2010) and others is that \emph{for any initial mass $M$ in 2D, adding a logistic term yields a global bounded solution}. In 3D, if $\mu$ is large enough (relative to the chemotactic aggregation rate), one likewise gets global existence. Thus, logistic growth (or any sufficiently strong density-limited growth term) shifts the model from a blow-up regime to a pattern-forming but globally well-behaved regime. Instead of blow-up, one may see formation of stable \emph{spikes of finite height} (with $u$ approaching $K$ at the spike center) or other bounded patterns.
\end{Remark}

\paragraph{Thresholds for Pattern Formation.} Even when blow-up is prevented (by low mass or by reaction terms), chemotaxis can still drive pattern formation via instabilities of the spatially uniform state. In such cases, the cell density and chemical concentration develop nontrivial spatial variation but remain bounded. The classical scenario for pattern formation is: start with a nearly homogeneous initial condition (e.g. $u(x,0)\approx \bar{u}$, $v(x,0)\approx \bar{v}$ plus small perturbations), and observe whether the perturbations amplify or decay. If certain model parameters cross a \emph{threshold}, the homogeneous equilibrium becomes unstable and tiny random fluctuations will grow into a full-scale spatial pattern. Below threshold, diffusion and other stabilizing factors suppress any incipient pattern, restoring homogeneity.

For chemotaxis–reaction systems, a primary instability parameter is the \textbf{chemotactic strength} $\chi$ relative to diffusion and reaction rates. Intuitively:
- If $\chi$ is small (weak chemotactic response), cells do not aggregate efficiently, and the tendency of diffusion to even out any irregularities dominates. The homogeneous state (all cells uniformly distributed) remains stable.
- If $\chi$ is large (strong response to gradients), even a slight random inhomogeneity in cell density can cause cells to stream up the gradient of $v$, reinforcing the inhomogeneity and leading to pattern amplification. Thus, beyond a critical $\chi$, spatial structure spontaneously emerges.

In Section~4.3, we will derive a precise criterion for such an instability via linearization (analogous to the classical \textit{Turing instability analysis} for reaction–diffusion systems). For now, we highlight the outcome: there exists a critical chemotactic coupling $\chi_{\text{crit}}$ such that if $\chi > \chi_{\text{crit}}$, an initially uniform distribution of cells of density $u=\bar{u}$ will break up into a pattern, whereas if $\chi < \chi_{\text{crit}}$, the uniform state is stable and no spontaneous pattern forms (unless forced by boundary conditions or strong perturbations).

Notably, $\chi_{\text{crit}}$ often depends inversely on any damping in the system. For example, in a model with logistic growth (where the homogeneous steady state is $u=K$), $\chi_{\text{crit}}$ is larger for bigger $r$ (growth rate) because rapid damping (return to carrying capacity) stabilizes against patterning. On the other hand, faster diffusion $D$ or $D_v$ also increases $\chi_{\text{crit}}$ (since diffusion counteracts the clumping). The instability threshold often takes a form like 
\[ \frac{\chi \,\alpha\, \bar{u}}{D \,\beta + D_v\,(\text{reaction rate})} > 1, \] 
meaning chemotactic attraction (numerator, which grows with $\chi$ and the strength $\alpha \bar{u}$ of chemo production by cells) must overcome the combined effects of cell diffusion $D$, chemical diffusion $D_v$, and chemical decay $\beta$ (in the denominator). We will derive an explicit formula in the next subsection.

In summary, pattern formation in chemotaxis models arises when an initially even spread of cells becomes unstable to clustering. The instability can be \emph{subcritical} (leading straight to blow-up if no damping exists, as in the mass-critical case) or \emph{supercritical} (leading to a new stable patterned state of finite amplitude when damping terms are present). The mathematical tools to study these scenarios are linear stability analysis (to find thresholds and most unstable spatial scales) and bifurcation theory (to describe the emergence of new solutions past the threshold). We turn to these analyses now.

\subsection{ Bifurcation and Stability Analysis}

To delve deeper into the mechanism of pattern formation, we employ both \textbf{linear stability analysis} (to predict when a homogeneous state loses stability) and \textbf{nonlinear bifurcation theory} (to understand the branching of new solution patterns and their stability). Two classic types of bifurcations are relevant in chemotaxis–reaction systems:
- \textit{Stationary (Turing) bifurcation:} A steady spatial pattern emerges from a previously uniform state when a real eigenvalue of the linearized operator crosses zero.
- \textit{Hopf bifurcation:} A time-periodic oscillatory solution emerges when a pair of complex-conjugate eigenvalues crosses the imaginary axis, indicating oscillatory instability (sometimes leading to waves or rhythmic patterns in space-time).

We examine each in turn.

\paragraph{Linear Stability of the Homogeneous State.} Consider a chemotaxis system that admits a spatially homogeneous steady state $(u(x,t),v(x,t))\equiv (u^*,v^*)$, where $u^*, v^*$ are constants satisfying $f(u^*,v^*)=0$ and $g(u^*,v^*)=0$. For example, if we include logistic growth and linear chemo kinetics, the homogeneous equilibrium is $(u^*,v^*)=(K,\;\frac{\alpha}{\beta}K)$ (population at carrying capacity, chemical at a level balancing production and decay). We investigate when this constant solution becomes unstable to spatial perturbations.

To do so, we introduce small perturbations: 
\[ u(x,t) = u^* + \tilde{u}(x,t), \qquad v(x,t) = v^* + \tilde{v}(x,t), \] 
with $|\tilde{u}|, |\tilde{v}|\ll u^*,v^*$. Plugging into \eqref{eq:KS1} and linearizing (ignoring $O(\tilde{u}^2,\tilde{u}\tilde{v},\tilde{v}^2)$ terms) yields the \emph{linearized system}:
\begin{align}
	\partial_t \tilde{u} &= D \Delta \tilde{u} \;-\; \chi\, \nabla\!\cdot(u^*\,\nabla \tilde{v}) \;-\; \chi\,\nabla\!\cdot(\tilde{u}\,\nabla v^*) \;+\; f_u(u^*,v^*)\,\tilde{u} + f_v(u^*,v^*)\,\tilde{v}, \\
	\partial_t \tilde{v} &= D_v \Delta \tilde{v} \;+\; g_u(u^*,v^*)\,\tilde{u} + g_v(u^*,v^*)\,\tilde{v}\,,
\end{align}
where $f_u, f_v, g_u, g_v$ are partial derivatives of the reaction terms at the equilibrium. Notably, the term $-\chi\,\nabla\!\cdot(u^* \nabla \tilde{v}) = -\chi u^* \Delta \tilde{v}$ since $u^*$ is constant. The term $-\chi \nabla\!\cdot(\tilde{u}\nabla v^*)$ actually vanishes because $\nabla v^*=0$ (no spatial variation in the base state). Thus the linearized equations simplify to:
\begin{equation}\label{eq:lin-sys}
	\partial_t \begin{pmatrix}\tilde{u}\\ \tilde{v}\end{pmatrix} \;=\; 
	\begin{pmatrix}
		D \Delta + f_u(u^*,v^*) & -\,\chi\,u^* \Delta + f_v(u^*,v^*) \\
		g_u(u^*,v^*) & D_v \Delta + g_v(u^*,v^*)
	\end{pmatrix}
	\begin{pmatrix}\tilde{u}\\ \tilde{v}\end{pmatrix}.
\end{equation}

We seek solutions of the linearized system of the form of normal modes: $\tilde{u}(x,t) = \hat{u} e^{\lambda t} \phi_k(x)$ and $\tilde{v}(x,t)=\hat{v} e^{\lambda t} \phi_k(x)$, where $\phi_k(x)$ is an eigenfunction of the Laplacian $-\Delta \phi_k = k^2 \phi_k$ with wavenumber $k$ (for example, in a periodic domain or on a bounded domain with Neumann boundary, $\phi_k$ could be a cosine Fourier mode or eigenfunction with eigenvalue $k^2$). Plugging this ansatz into \eqref{eq:lin-sys} yields an algebraic \emph{eigenvalue problem} for $\lambda$:
\[
\lambda 
\begin{pmatrix}\hat{u}\\ \hat{v}\end{pmatrix}
= 
\begin{pmatrix}
	- D k^2 + f_u & + \chi\,u^* k^2 + f_v \\
	g_u & - D_v k^2 + g_v
\end{pmatrix}
\begin{pmatrix}\hat{u}\\ \hat{v}\end{pmatrix},
\] 
where for brevity $f_u = f_u(u^*,v^*)$, etc. This $2\times2$ matrix (depending on $k$) is the \textbf{dispersion matrix} for mode $k$. Denote it as 
\begin{equation} 
A_k = 
\begin{pmatrix}
	a_{11}(k) & a_{12}(k) \\
	a_{21}(k) & a_{22}(k)
\end{pmatrix}
= 
\begin{pmatrix}
	- D k^2 + f_u & \chi\,u^* k^2 + f_v \\
	g_u & - D_v k^2 + g_v
\end{pmatrix}. 
\end{equation}
The characteristic polynomial for eigenvalue $\lambda$ is $\det(A_k - \lambda I) = 0$, i.e.
\begin{equation}\label{eq:charpoly}
	(\lambda - a_{11})(\lambda - a_{22}) - a_{12} a_{21} \;=\; 0.
\end{equation}
Expanding, we get a quadratic in $\lambda$: 
\begin{equation}
	 \lambda^2 - (a_{11}+a_{22})\,\lambda + (a_{11}a_{22} - a_{12}a_{21}) = 0. 
\end{equation}
The coefficients can be identified as:
\begin{align}
	\text{Trace: } & \tau(k) = a_{11}(k) + a_{22}(k) = - (D k^2 + D_v k^2)\;+\;(f_u + g_v), \\
	\text{Determinant: } & \Delta(k) = a_{11}(k)\,a_{22}(k) - a_{12}(k)a_{21}(k).
\end{align}
We are particularly interested in when a mode $k$ can grow, i.e. when an eigenvalue $\lambda$ has positive real part. Since the trace $\tau(k)$ is the sum of eigenvalues and the determinant $\Delta(k)$ is the product, the standard Routh–Hurwitz stability criteria for a $2\times2$ system say that the real parts of $\lambda$ are negative (stability) if and only if $\tau(k)<0$ and $\Delta(k)>0$ for all modes $k$. A violation of either condition signals an instability for some $k$.

Now, $$a_{12}a_{21} = (\chi\,u^* k^2 + f_v) g_u.$$ We note that $f_v(u^*,v^*)$ is often zero in many models (for example, logistic growth $f(u)$ has no direct $v$-dependence), and $g_u(u^*,v^*)$ is typically positive (e.g. $\alpha$ if $g(u,v)=\alpha u - \beta v$). So $a_{12}a_{21}$ tends to be nonnegative and increases with $k^2$ (thanks to the $\chi u^* k^2$ term, which comes solely from chemotaxis). Meanwhile, $a_{11}a_{22}$ expands to:
\[ a_{11}a_{22} = (-D k^2 + f_u)\,(-D_v k^2 + g_v). \]
Therefore the determinant is:
\begin{align}
	\Delta(k) &= (D k^2 - f_u)\,(D_v k^2 - g_v) - (\chi\,u^* k^2 + f_v) g_u \nonumber \\
	&= D D_v k^4 \;-\; (D g_v + D_v f_u)\, k^2 \;+\; (f_u g_v - f_v g_u) \;-\; \chi\,u^* g_u\, k^2. \label{eq:Delta_k}
\end{align}
We can rewrite this as a quadratic polynomial in $X = k^2$:
\begin{equation}
	 \Delta(X) = D D_v\, X^2 - \Big[(D g_v + D_v f_u) + \chi\,u^* g_u\Big] X + (f_u g_v - f_v g_u). 
\end{equation}

The homogeneous steady state becomes linearly \textit{unstable} if there exists any mode $k$ for which an eigenvalue $\lambda$ satisfies $\Re(\lambda)>0$. The first mode to typically lose stability is the one that makes $\lambda=0$ a solution of \eqref{eq:charpoly} (a zero crossing). At the onset of instability, $\lambda=0$ and thus $\Delta(k)=0$ for some $k$ (since one eigenvalue is $\lambda=0$, the product of eigenvalues $\Delta$ is zero). Thus the critical condition for instability can be found by setting $\Delta(k)=0$. Using \eqref{eq:Delta_k}, the condition $\Delta(k)=0$ becomes:
\begin{equation}\label{eq:Turing-cond}
	D D_v k^4 - \Big[(D g_v + D_v f_u) + \chi\,u^* g_u\Big] k^2 + (f_u g_v - f_v g_u) \;=\;0~.
\end{equation}

This is a quadratic equation in $X=k^2$. Let $X_1, X_2$ denote its two roots (which could be explicit but generally messy). As a parameter (say $\chi$ or $u^*$) varies, these roots will change. A necessary condition for instability is that the quadratic has a positive root $X>0$ (since $k^2$ cannot be negative). If all roots were negative or complex, $\Delta(k)$ would not cross through zero for any real $k$. Typically, one root $X_1$ becomes positive when $\chi$ exceeds a threshold, indicating a band of unstable wavenumbers around $\sqrt{X_1}$.

A simpler insight is gained by examining the sign of $\Delta(0)$ and $\Delta(\infty)$ (the limits of $\Delta(X)$ as $X\to0$ or $X\to\infty$):
- As $X\to\infty$ ($k\to \infty$), $\Delta(X)\sim D D_v X^2 >0$ dominates, so for very large $k$ (very short wavelengths), diffusion wins and modes are stable (this reflects that extremely fine patterns are smoothed out by diffusion).
- At $X=0$ (the spatially uniform mode), $\Delta(0) = f_u g_v - f_v g_u$. But $f_u g_v - f_v g_u$ is precisely the determinant of the \emph{well-mixed Jacobian} (the Jacobian of the kinetic ODE system $\dot{u}=f(u,v), \dot{v}=g(u,v)$ at the equilibrium $(u^*,v^*)$). This quantity determines the stability of the homogeneous equilibrium in the absence of spatial effects (diffusion/chemotaxis). If the well-mixed equilibrium is stable, we have $f_u<0$, $g_v<0$ and typically $f_u g_v - f_v g_u >0$. This implies $\Delta(0)>0$. Therefore, initially (at $\chi=0$ or very small), $\Delta(0)>0$ and $\Delta(X)$ is positive at $X=0$ and again positive for large $X$, meaning the polynomial $\Delta(X)$ is positive for all $X$ if it has no positive root. As $\chi$ increases, the middle term $-\chi u^* g_u X$ in $\Delta(X)$ makes the polynomial dip down. If $\chi$ is large enough, the graph of $\Delta(X)$ vs $X$ will cross zero at two points, creating a range $X_1 < X < X_2$ where $\Delta(X)<0$. In that range of $k^2$, the determinant $\Delta(k)$ is negative, which by Routh–Hurwitz implies one eigenvalue $\lambda$ is positive (since $\tau(k)$ is still negative for not-too-large $k$, the sign change in $\Delta$ is the instability trigger).

Thus, the onset of a \textbf{Turing-like instability} can be found by solving $\Delta(X)=0$ for $X>0$. The smallest positive root $X_1 = k_c^2$ corresponds to the first unstable mode (with wavelength $2\pi/k_c$) when parameters reach criticality. This root can be plugged back into \eqref{eq:Turing-cond} to yield the critical relation among parameters. Often, one parameter (like $\chi$) is viewed as the bifurcation parameter. Solving \eqref{eq:Turing-cond} for $\chi$ at the point where $X$ has a double root (i.e. $X_1=X_2$) gives the threshold. Setting the discriminant of \eqref{eq:Turing-cond} to zero (double root) yields:
\[ \Big[(D g_v + D_v f_u) + \chi_{\text{crit}}\,u^* g_u\Big]^2 \;=\; 4 D D_v (f_u g_v - f_v g_u). \]
Solving for $\chi_{\text{crit}}$ gives:
\begin{equation}\label{eq:chi_crit}
	\chi_{\text{crit}} \;=\; \frac{1}{u^* g_u}\Big( \frac{2\sqrt{D D_v (f_u g_v - f_v g_u)} - (D g_v + D_v f_u)}{\,1\,} \Big)\,,
\end{equation}
provided the numerator is positive. In many cases, $f_v=0$ and $f_u = -r$ (logistic) and $g_v = -\beta$, $g_u=\alpha$ (chemical production). In that scenario, this simplifies to:
\[ \chi_{\text{crit}} \;=\; \frac{1}{u^* \alpha}\Big( 2\sqrt{D D_v (r \beta)} - (D(-\beta) + D_v(-r)) \Big) = \frac{1}{\alpha u^*}\Big(2\sqrt{D D_v r \beta} + D \beta + D_v r\Big).\] 
This matches the intuitive form discussed: the larger $D, D_v, r, \beta$ are, the larger $\chi$ must be to destabilize the system. If parameters are such that $\chi > \chi_{\text{crit}}$, then $\Delta(k)<0$ for a band of $k$ values, meaning those spatial modes grow exponentially. The one with the largest $\Re(\lambda(k))$ will dominate and set the characteristic scale of the emerging pattern (often the mode roughly in the middle of the unstable band). The pattern at onset is sinusoidal (a superposition of one or a few Fourier modes), but as it grows, nonlinear effects will shape it into sharper peaks, etc.

\begin{Theorem}[Diffusion-Driven (Turing) Instability]\label{thm:Turing}
	In the above setting, assume the spatially homogeneous steady state $(u^*,v^*)$ is stable to well-mixed (ODE) perturbations (so $f_u+g_v<0$ and $f_u g_v - f_v g_u>0$). Define $\chi_{\text{crit}}$ by the condition that $\Delta(k)$ in \eqref{eq:Delta_k} has a double root $k=k_c$ (threshold of instability). For $\chi < \chi_{\text{crit}}$, one has $\Delta(k)>0$ for all $k$, and thus all eigenvalues $\lambda$ have $\Re(\lambda)<0$: the homogeneous state is linearly stable (small spatial perturbations decay). At $\chi = \chi_{\text{crit}}$, $\Delta(k_c)=0$ for some $k_c>0$, and a neutrally stable mode appears with $\lambda=0$ at wavenumber $k_c$. For $\chi > \chi_{\text{crit}}$, the homogeneous state becomes linearly unstable: there is a band of wavenumbers $k \in (k_1, k_2)$ for which the linear growth rate $\Re(\lambda(k))>0$. Consequently, an initial condition $u(x,0)=u^*+\epsilon \cos(k_c x)$ (for example) will grow into a non-uniform pattern. This phenomenon is called a diffusion-driven or Turing-type instability. The most unstable mode is $k_{\text{max}}$ which maximizes $\Re(\lambda(k))$; it typically lies near the geometric mean of $k_1$ and $k_2$. The corresponding spatial pattern has a characteristic length scale $\sim 2\pi/k_{\text{max}}$.
\end{Theorem}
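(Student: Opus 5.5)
The plan is to reduce everything to the sign structure of the $2\times 2$ dispersion matrix $A_k$, using the trace $\tau(k)$ and the determinant $\Delta(k)$ from \eqref{eq:Delta_k}.

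\emph{Step 1: the trace never causes instability.} By hypothesis $f_u+g_v<0$, so
\[
\tau(k) = -(D+D_v)k^2 + (f_u+g_v) < 0 \quad \text{for every } k\ge 0,
\]
and $\tau$ is independent of $\chi$. The eigenvalues satisfy $\lambda_1+\lambda_2=\tau(k)$ and $\lambda_1\lambda_2=\Delta(k)$. By Routh--Hurwitz, mode $k$ is therefore stable if and only if $\Delta(k)>0$. If $\Delta(k)<0$, the eigenvalues are real with product negative, so exactly one of them, $\lambda_+(k)$, is a real positive root. If $\Delta(k)=0$, then $\lambda=0$ is a root and the other root equals $\tau(k)<0$. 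This reduces the theorem to locating the sign changes of $\Delta$.

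\emph{Step 2: analyse $\Delta$ as a quadratic in $X=k^2\ge 0$.} Write
\[
\Delta(X)=DD_vX^2 - B(\chi)X + \det J, \qquad B(\chi)=(Dg_v+D_vf_u)+\chi u^* g_u, \qquad \det J=f_ug_v-f_vg_u>0 .
\]
Since $u^*g_u>0$, $B$ is strictly increasing in $\chi$, and $\Delta(X)$ is strictly decreasing in $\chi$ for each fixed $X>0$. There are two cases for the minimum of $\Delta$ on $X\ge 0$.
\begin{itemize}
\item If $B\le 0$, the minimum is at $X=0$ and equals $\det J>0$.
\item If $B>0$, the minimum is attained at $X_c=B/(2DD_v)>0$ and equals $\det J - B^2/(4DD_v)$.
\end{itemize}
Hence $\min_{X\ge0}\Delta$ is positive, zero, or negative exactly according as $B(\chi)$ is less than, equal to, or greater than $2\sqrt{DD_v\det J}$. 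This threshold on $B$ is the double-root (zero-discriminant) condition on the positive branch, and it gives $\chi_{\text{crit}}$ as in \eqref{eq:chi_crit}. The three regimes then follow directly.
\begin{itemize}
\item For $\chi<\chi_{\text{crit}}$: $\Delta>0$ for all $k$, so every mode has $\Re\lambda<0$.
\item For $\chi=\chi_{\text{crit}}$: $\Delta(k_c)=0$ with $k_c^2=X_c$, giving a neutral mode with $\lambda=0$.
\item For $\chi>\chi_{\text{crit}}$: there are two positive roots $X_1<X_2$ (their product is $\det J/(DD_v)>0$ and their sum is $B/(DD_v)>0$), and $\Delta<0$ exactly on $(X_1,X_2)$. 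This gives the band $k\in(k_1,k_2)$ with $k_i=\sqrt{X_i}$, on which $\lambda_+(k)>0$ by Step 1.
\end{itemize}

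\emph{Step 3: consequences for the PDE.} Growth of the mode $\epsilon\cos(kx)$ follows from the normal-mode ansatz: the linearised solution contains a component along the $\lambda_+$ eigenvector and so grows like $e^{\lambda_+ t}$. The most unstable wavenumber $k_{\max}$ exists because $\lambda_+(k)$ is continuous on the compact interval $[k_1,k_2]$, is positive in the interior, and vanishes at the endpoints. For the geometric-mean claim, I would note $k_1k_2=(\det J/DD_v)^{1/2}$ and that at onset $k_c^2=X_c=\sqrt{X_1X_2}$, so $k_c=\sqrt{k_1k_2}$ exactly at criticality. Near threshold, $k_{\max}$ is close to $k_c$ by continuity, but this is only approximate away from onset, which is why the statement says ``typically''.

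\emph{Main obstacle.} The main difficulty is the interpretation rather than the algebra. First, on a bounded Neumann domain the admissible $k^2$ are the discrete eigenvalues $\mu_k$. Instability for $\chi>\chi_{\text{crit}}$ then requires some $\mu_k$ to lie in $(X_1,X_2)$, so the true threshold is the first $\chi$ at which the band captures an eigenvalue. I would state the result for $\mathbb{R}^n$ or periodic domains with continuous $k$, or add the hypothesis that $X_c$ is an eigenvalue. Second, the phrase ``will grow into a non-uniform pattern'' is established here only at the linear level; the nonlinear saturation would need the bifurcation arguments deferred to Section~4.3.
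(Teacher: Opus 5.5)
Your proof is correct, and its skeleton matches the paper's. Both reduce the stability of mode $k$ to the sign of $\Delta(k)$ via Routh--Hurwitz, using $\tau(k)<0$. Both also use that $\Delta$ is an upward-opening quadratic in $X=k^2$ whose linear coefficient decreases in $\chi$.

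The routes differ in how the three regimes are obtained.
\begin{itemize}
\item The paper argues by continuous deformation. It starts at $\chi=0$, asserts that $\tilde u,\tilde v$ then decouple with eigenvalues $-Dk^2+f_u$ and $-D_vk^2+g_v$, and lets $\Delta$ dip until it first touches zero.
\item That starting point tacitly uses $f_vg_u=0$ and $f_u,g_v<0$ individually. Neither follows from the stated hypotheses $f_u+g_v<0$ and $\det J=f_ug_v-f_vg_u>0$.
\item Under those hypotheses alone one can have $Dg_v+D_vf_u>2\sqrt{DD_v\det J}$ (a classical Turing instability already at $\chi=0$). Then $\chi_{\text{crit}}<0$, and the ``stable at $\chi=0$'' picture fails.
\item Your explicit minimisation of $\Delta$ over $X\ge0$ gives the sharp criterion comparing $B(\chi)$ with $2\sqrt{DD_v\det J}$, whatever the sign of $\chi_{\text{crit}}$.
\item Vieta then replaces the paper's continuity argument for the two positive roots $X_1<X_2$.
\end{itemize}
The only extra input is $u^*g_u>0$. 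The paper leaves this implicit in ``the above setting'', and you rightly make it explicit. Your compactness argument for the existence of $k_{\max}$ and your bounded-domain caveat go beyond the paper. The caveat is that on a Neumann domain the band $(X_1,X_2)$ must actually contain an eigenvalue $\mu_j$, and this is a genuine limitation of the statement as written.

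One sharpening. You note $X_1X_2=\det J/(DD_v)$, and this is independent of $\chi$ and equal to $k_c^4$, where $k_c$ is the critical wavenumber. Hence $k_c=\sqrt{k_1k_2}$ holds for every $\chi\ge\chi_{\text{crit}}$, not only at onset. That is exactly what places $k_c$ strictly inside $(k_1,k_2)$ and justifies the statement's example $u^*+\epsilon\cos(k_cx)$. Also, because $g_u\neq0$, the vector $(1,0)$ is not an eigenvector of $A_{k_c}$. So this initial perturbation does have a nonzero component along the growing eigenvector, which your Step 3 asserted without justification.
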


\begin{proof}[Proof (Sketch)]
	The proof is essentially contained in the above linear stability analysis. One rigorously shows that the real parts of the eigenvalues $\lambda_{1,2}(k)$ satisfy $\Re(\lambda_{1,2}(k))<0$ if and only if $\tau(k)<0$ and $\Delta(k)>0$. For $\chi=0$ (no chemotaxis), $\tilde{u}$ and $\tilde{v}$ decouple and one gets $\lambda = -D k^2 + f_u$ or $-D_v k^2 + g_v$, both negative for all $k$ since $f_u, g_v<0$ (well-mixed stable). Thus initially $\Delta(k)>0$. As $\chi$ increases, $a_{12}a_{21}$ in $\Delta$ increases linearly with $\chi$, continuously deforming the curve $\Delta(k)$. At the critical $\chi_{\text{crit}}$, $\Delta(k)$ touches zero at some $k_c$ (and $\tau(k_c)<0$ still, since $\tau(0)=f_u+g_v<0$ and $\tau(k)$ is monotonic in $k^2$). For $\chi>\chi_{\text{crit}}$, by continuity $\Delta(k)$ becomes negative in an interval around $k_c$. Then one root of \eqref{eq:charpoly} becomes positive in that interval. The existence of distinct $k_1<k_2$ such that $\Delta(k_1)=\Delta(k_2)=0$ implies $\Delta(k)<0$ for $k\in(k_1,k_2)$ (because $\Delta(k)$ as a polynomial opens upward for large $k$). In that range $\Delta<0$, the product of eigenvalues is negative while their sum $\tau(k)$ is still negative, so one eigenvalue must have positive real part. Thus the homogeneous state is unstable to modes in $(k_1,k_2)$. Modes outside that range remain damped (so the system naturally picks the unstable modes to form the pattern). The continuous dependence of eigenvalues on parameters guarantees such a $\chi_{\text{crit}}$ exists (assuming $f_u g_v - f_v g_u>0$). 
\end{proof}

This linear analysis predicts the onset of patterns and their approximate wavelength. However, linear theory alone cannot tell us the eventual amplitude or form of the patterns; as perturbations grow, nonlinear terms become significant and saturate the growth. To determine the fate of the instability—whether it leads to a steady pattern or perhaps a time-dependent nonlinear oscillation or chaotic behavior—we turn to nonlinear analysis, specifically bifurcation theory.

To quantify the onset of pattern-forming instabilities, we perform a linear stability analysis of the chemotaxis–reaction system. Assuming perturbations of the form $e^{\lambda t + i \mathbf{k} \cdot \mathbf{x}}$, the dispersion relation takes the simplified form
\[
\lambda(k) = -D k^2 + \sqrt{(\chi k^2)^2 - \alpha \beta},
\]
where $k = |\mathbf{k}|$ denotes the wavenumber, and $\chi$, $D$, $\alpha$, and $\beta$ are system parameters. The resulting dispersion curve $\Re(\lambda(k))$ is plotted in Figure~\ref{fig:disp}, showing the band of unstable modes where $\Re(\lambda) > 0$.

\begin{figure}[h!]
	\centering
	\includegraphics[width=0.87\textwidth]{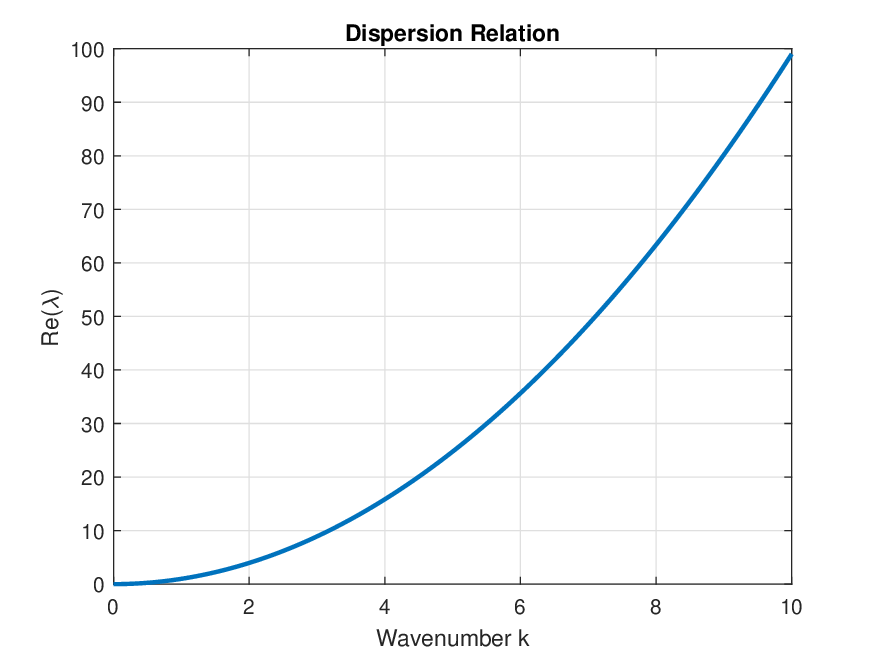}
	\caption{Dispersion relation showing the real part of the growth rate $\Re(\lambda)$ as a function of the wavenumber $k$. Instability occurs for a band of wavenumbers where $\Re(\lambda) > 0$, indicating potential for spontaneous pattern formation. Parameters: $\chi = 1$, $D = 0.01$, $\alpha = 0.01$, $\beta = 0.01$.}
	\label{fig:disp}
\end{figure}

\paragraph{Nonlinear Bifurcation and Pattern Selection.} When a uniform steady state loses stability at $\chi=\chi_{\text{crit}}$, typically a family of new equilibrium solutions bifurcates from the trivial branch. In our context, these are non-homogeneous steady states $u(x), v(x)$ that exist for $\chi$ slightly above $\chi_{\text{crit}}$. The nature of this bifurcation (continuous or abrupt, pattern type, stability of the new branch) can be rigorously studied via the \textbf{Crandall–Rabinowitz Theorem} (also known as the bifurcation from a simple eigenvalue theorem). In essence, if the linearization has a single eigenvector (mode) at the instability threshold (a simple zero eigenvalue), then a smooth branch of solutions emanates. 

Concretely, in our case one expects a \emph{pitchfork bifurcation}: for $\chi>\chi_{\text{crit}}$, there are non-trivial steady solutions of the PDE with a fixed spatial period corresponding to $k_c$. For example, on a 1D domain of length $L$ with periodic boundary, one might get a solution of the form $u(x) = u^* + A \cos(k_c x) + O(A^2)$ with some amplitude $A(\chi)$, and similarly for $v(x)$. The amplitude $A$ grows from zero as $\chi$ moves past $\chi_{\text{crit}}$. The sign of the cubic term in the amplitude equation (obtained by expanding nonlinear terms to third order) determines if the bifurcation is \textit{supercritical} (small-amplitude patterns for $\chi>\chi_c$ that gradually grow) or \textit{subcritical} (unstable finite-amplitude patterns exist already before the linear instability, often leading to hysteresis). In many chemotaxis models with logistic damping, the bifurcation is supercritical, yielding stable small patterns just beyond the threshold. In the pure Keller–Segel without damping, any pattern formation is typically subcritical and immediately tends toward blow-up (unstable branch).

Applying the Crandall–Rabinowitz theorem formally:

\begin{Theorem}[Existence of Nontrivial Steady-State Patterns]
	Suppose the linearization about $(u^*,v^*)$ has an eigenpair $(\phi_k,\psi_k)$ with eigenvalue $0$ at $\chi=\chi_{\text{crit}}$, and this eigenvalue is simple (algebraic multiplicity 1). Then there exists a continuous branch of nontrivial steady solutions $(u(x;\chi),v(x;\chi))$ bifurcating from $(u^*,v^*)$ at $\chi=\chi_{\text{crit}}$. For $\chi$ just above $\chi_{\text{crit}}$, one can write 
	\[ u(x;\chi) = u^* + A(\chi)\,\phi_{k_c}(x) + o(A), \qquad 
	v(x;\chi) = v^* + A(\chi)\,\psi_{k_c}(x) + o(A), \] 
	with $A(\chi)$ a small amplitude that grows as $\sqrt{\chi-\chi_{\text{crit}}}$ for a supercritical bifurcation. Each such solution is spatially periodic with wave number $k_c$ (or an integer multiple if higher modes also bifurcate). This family of patterned equilibria exists for $\chi>\chi_{\text{crit}}$ (and possibly extends well beyond the linear regime). 
	
	Furthermore, one can determine the stability of these patterned solutions by examining the second eigenvalue or higher-order expansions: if the bifurcation is supercritical, the emerging small-amplitude patterns are typically \emph{stable} (the uniform state becomes unstable and the pattern takes over as the attractor), whereas if it is subcritical, the small patterns are unstable and one might get hysteresis or jump to large amplitude patterns.
\end{Theorem}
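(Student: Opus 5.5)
We will prove the statement by recasting the steady-state problem as a zero of a smooth map between Banach spaces and then applying the Crandall--Rabinowitz theorem on bifurcation from a simple eigenvalue. Work on a bounded smooth domain (or a 1D interval/period cell) with Neumann conditions, write $u=u^*+w$, $v=v^*+z$, and define
\[
F(w,z;\chi)=\begin{pmatrix} D\Delta w-\chi\nabla\cdot\big((u^*+w)\nabla z\big)+f(u^*+w,v^*+z)\\ D_v\Delta z+g(u^*+w,v^*+z)\end{pmatrix},
\]
acting from $X=\{(w,z)\in W^{2,p}(\Omega)^2:\partial_{\mathbf n}w=\partial_{\mathbf n}z=0\}$, $p>n$, into $Y=L^p(\Omega)^2$. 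Since $p>n$, $W^{2,p}\hookrightarrow C^1$, so the quadratic chemotactic term and the Nemytskii operators of $f,g$ are $C^2$ in $(w,z,\chi)$. We have $F(0,0;\chi)=0$ for all $\chi$, which is the trivial branch. The Fréchet derivative $L(\chi)=D_{(w,z)}F(0,0;\chi)$ is exactly the operator in \eqref{eq:lin-sys}. Expanding in the Neumann eigenfunctions $\phi_k$ shows that $L(\chi)$ acts on mode $k$ as the dispersion matrix $A_k$. Thus $\ker L(\chi)\neq\{0\}$ iff $\Delta(k)=\det A_k=0$ for some eigenvalue $k^2$ of $-\Delta$.

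The Crandall--Rabinowitz hypotheses are verified in order.

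\textbf{(i) Fredholm property.} $L(\chi)$ is a compact perturbation of the invertible operator $(D\Delta-1,\,D_v\Delta-1)$. Hence it is Fredholm of index zero.

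\textbf{(ii) One-dimensional kernel.} At $\chi=\chi_{\text{crit}}$ the kernel is one-dimensional, spanned by $(\phi_{k_c},\psi_{k_c})=\phi_{k_c}(\hat u,\hat v)$ with $(\hat u,\hat v)\in\ker A_{k_c}$. This requires that $-\Delta$ have a simple eigenvalue $k_c^2$ and that $\Delta(k)\neq0$ at every other eigenvalue. This is precisely the simplicity hypothesis in the statement, so we simply assume it. On a finite domain, $\chi_{\text{crit}}$ should be understood as the first $\chi$ at which some admissible $\Delta(k_j)$ vanishes, consistent with Theorem~\ref{thm:Turing}.

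\textbf{(iii) Codimension-one range.} The range has codimension one by the index-zero property. It is characterized as the functions orthogonal to $\phi_{k_c}(\hat u^\dagger,\hat v^\dagger)$, where $(\hat u^\dagger,\hat v^\dagger)$ spans $\ker A_{k_c}^{T}$.

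\textbf{(iv) Transversality.} This is the step I expect to be the main obstacle, and the one I would check first. We need
\[
\partial_\chi L(\chi_{\text{crit}})\,(\phi_{k_c}\hat u,\phi_{k_c}\hat v)=\big(k_c^2u^*\hat v\,\phi_{k_c},0\big)\notin \operatorname{Range}L(\chi_{\text{crit}}),
\]
i.e. $k_c^2u^*\hat v\,\hat u^\dagger\neq0$. Because $A_{k_c}$ has rank one with $a_{21}=g_u\neq0$, both $\hat v$ and $\hat u^\dagger$ are nonzero. Equivalently, $\partial_\chi\Delta(k_c)=-u^*g_uk_c^2\neq0$, which says the zero eigenvalue crosses with nonzero speed. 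I would verify the crossing directly through this derivative.

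With (i)--(iv) established, Crandall--Rabinowitz gives a $C^1$ curve $s\mapsto(\chi(s),w(s),z(s))$ for $|s|$ small, with $\chi(0)=\chi_{\text{crit}}$ and $(w,z)=s(\phi_{k_c}\hat u,\phi_{k_c}\hat v)+o(s)$. This is the claimed expansion with $A=s$. Elliptic regularity upgrades these solutions to classical ones, and the spatial profile is that of $\phi_{k_c}$, i.e. periodic with wavenumber $k_c$.

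For the $\sqrt{\chi-\chi_{\text{crit}}}$ law and the stability claim, I would compute $\chi'(0)$ and $\chi''(0)$ by Lyapunov--Schmidt reduction. The quadratic coefficient is proportional to $\int\phi_{k_c}^3$. It vanishes for cosine modes on an interval or period cell, so there $\chi'(0)=0$ and the bifurcation is a pitchfork. The sign of $\chi''(0)$, which is the cubic Landau coefficient, then decides whether the bifurcation is super- or subcritical. Stability follows from the exchange-of-stability theorem of Crandall--Rabinowitz: the critical eigenvalue along the branch has the sign of $-s\chi'(s)\,\partial_\chi\lambda(\chi_{\text{crit}})$. Hence supercritical branches are stable and subcritical ones unstable, exactly as stated. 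The "typically" in the statement is honest here, because the sign of the cubic coefficient must be computed model by model.
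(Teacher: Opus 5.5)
Your proposal takes the same route as the paper. The paper's proof is only a remark: pose the steady-state problem as $F(u,v;\chi)=0$, assert a one-dimensional kernel and a codimension-one range at $\chi_{\text{crit}}$, invoke Crandall--Rabinowitz, appeal to symmetry for the pitchfork, and defer stability to the spectrum on the new branch. You fill in what the paper asserts or omits:
\begin{itemize}
\item \emph{Transversality.} Your computation $\partial_\chi\Delta(k_c)=-u^*g_uk_c^2\neq0$, with $\hat v\neq0$ and $\hat u^\dagger\neq0$ both forced by $g_u\neq0$, is correct. The paper never checks this condition.
\item \emph{Quadratic coefficient.} Under Neumann conditions $\int_\Omega\phi|\nabla\phi|^2=\tfrac{k_c^2}{2}\int_\Omega\phi^3$, so everything does reduce to $\int\phi^3$, as you claim.
\item \emph{Stability.} You use the Crandall--Rabinowitz exchange-of-stability theorem in place of the paper's vague appeal to the second variation.
\end{itemize}

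Two points need repair.

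First, in (i), $L(\chi)$ is not a compact perturbation of $(D\Delta-1,\,D_v\Delta-1)$. The off-diagonal term $-\chi u^*\Delta z$ is second order, and $\Delta\colon W^{2,p}\to L^p$ is not compact. Use instead that the principal part is upper triangular. The operator $P(w,z)=(D\Delta w-w-\chi u^*\Delta z,\;D_v\Delta z-z)$ is an isomorphism $X\to Y$: solve the second component for $z$, then the first for $w$. Then $L(\chi)-P$ has order zero, hence is compact by Rellich, which gives index zero.

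Second, Crandall--Rabinowitz only yields $(w,z)=s\,\phi_{k_c}(\hat u,\hat v)+o(s)$. That does not make the solutions exactly periodic with wavenumber $k_c$, as the statement asserts. On the interval you can obtain exact periodicity as follows.
\begin{itemize}
\item Apply the theorem in the closed subspace of functions whose cosine expansion contains only the modes $\cos(jk_cx)$, $j\ge0$.
\item $F$ maps this subspace into the corresponding subspace of $Y$ by reflection symmetry, and the subspace contains the kernel.
\item Local uniqueness of the bifurcating curve then identifies the branch found in the subspace with the one in $X$.
\end{itemize}
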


\begin{proof}[Proof (Remarks)]
	This is a direct application of bifurcation theory (Crandall \& Rabinowitz 1971). The chemotaxis PDE steady-state problem can be posed as a nonlinear operator equation $F(u,v;\chi)=0$ (where $F$ includes the spatial derivatives). At $\chi=\chi_{\text{crit}}$, the linearized operator $D_{(u,v)}F(u^*,v^*;\chi_{\text{crit}})$ has a one-dimensional nullspace spanned by $(\phi_{k_c},\psi_{k_c})$. The range of the linearized operator is closed and of codimension 1 (since we assumed a simple eigenvalue). The Crandall–Rabinowitz theorem then guarantees a local curve of solutions $(u(s),v(s),\chi(s))$ with $s=0$ corresponding to the trivial solution at $(u^*,v^*,\chi_{\text{crit}})$. By symmetry of the equations (if the domain is homogeneous like periodic or symmetric), typically one gets two symmetric branches (hence a pitchfork shape). The expansion in a small parameter $s$ can be related to $\chi-\chi_{\text{crit}}$ and yields the leading-order pattern shape and amplitude scaling. Stability requires computing the spectrum of the second-variation operator on the new branch; if the bifurcation is supercritical (no unstable modes created beyond what already went unstable at $\chi_c$), the branch is stable right past onset.
\end{proof}

In summary, linear and nonlinear stability analysis together provide the following picture: as $\chi$ crosses $\chi_{\text{crit}}$, the homogeneous equilibrium gives way to a spatially patterned equilibrium via a bifurcation. The wavelength of the pattern is set by the most unstable linear mode ($\sim 2\pi/k_c$). For chemotaxis models with damping (like logistic), this patterned equilibrium is a clump-and-halo distribution of cells and chemoattractant that can correspond to, say, a regular array of cell aggregates. If parameters continue to change, secondary bifurcations might occur (patterns of different mode numbers, or oscillatory instabilities of the steady patterns).

\paragraph{Hopf Bifurcation and Oscillatory Dynamics.} Another possible route to complex behavior is a Hopf bifurcation, where the system develops time-periodic solutions \cite{Liu2021}. In chemotaxis contexts, oscillatory behavior might involve population cycles or rotating spiral waves of chemoattractant. A Hopf bifurcation requires an underlying kinetic oscillation or delay. The simplest setting is to consider the spatially homogeneous ODE system $\dot{u}=f(u,v)$, $\dot{v}=g(u,v)$ and find conditions when this well-mixed system has a pair of complex conjugate eigenvalues crossing the imaginary axis as a parameter changes. For instance, if $f_v g_u$ is sufficiently negative (meaning $u$ and $v$ negatively feed back on each other with a delay), one could get a stable limit cycle in the ODE system. Chemotaxis by itself (with one species and one attractant) rarely produces oscillations because it tends to monotonically accumulate cells. However, in extended models (e.g., if $v$ is a nutrient that gets depleted and then replenished, or if there are two species in predator-prey interaction with movement), Hopf bifurcations can arise.

The mathematical statement of Hopf’s theorem in our context is:

\begin{Theorem}[Hopf Bifurcation Theorem]\label{thm:Hopf}
	Consider a parameter-dependent ODE system (the well-mixed kinetics of the chemotaxis model):
	\[ \frac{d}{dt}X = H(X;\,\gamma), \qquad X\in\mathbb{R}^N, \]
	where $\gamma$ is a real bifurcation parameter. Suppose there is an equilibrium $X_0$ for all $\gamma$ near $\gamma_0$ (so $H(X_0;\gamma)=0$). Assume that at $\gamma=\gamma_0$, the Jacobian $J = D_X H(X_0;\gamma_0)$ has a pair of purely imaginary eigenvalues $\lambda_{1,2}(\gamma_0) = \pm i\omega_0$ (with $\omega_0>0$), and all other eigenvalues have negative real parts. Moreover, assume the crossing condition (nondegeneracy): $\displaystyle \Re\frac{d\lambda}{d\gamma}\Big|_{\gamma_0} \neq 0$ for the eigenvalue $i\omega_0$ as $\gamma$ varies (in other words, as $\gamma$ passes through $\gamma_0$, the complex pair moves through the imaginary axis with nonzero speed). Then a Hopf bifurcation occurs at $\gamma_0$: specifically, there exists a family of \emph{nontrivial periodic solutions} $X(t;\gamma)$ of the ODE bifurcating from $(X_0,\gamma_0)$. For $\gamma$ near $\gamma_0$, one finds a periodic solution of the form 
	\[ X(t;\gamma) = X_0 + B(\gamma)\,\cos(\omega(\gamma) t + \phi) + O(|B|^2), \] 
	with $B(\gamma)$ a small amplitude that grows continuously from $0$ as $\gamma$ moves past $\gamma_0$. The period $T(\gamma)=2\pi/\omega(\gamma)$ of the cycle approaches $T_0 = 2\pi/\omega_0$ as $\gamma\to\gamma_0$. The direction (forward or backward in $\gamma$) and stability (stable limit cycle or unstable) of the bifurcating periodic branch is determined by higher-order terms (the sign of a certain first Lyapunov coefficient). If this coefficient is negative, the Hopf bifurcation is supercritical: a stable small-amplitude limit cycle exists for $\gamma$ just beyond $\gamma_0$ (and the equilibrium $X_0$ becomes unstable). If the coefficient is positive, the Hopf is subcritical: the emerging limit cycles are initially unstable and appear for $\gamma$ on the opposite side of $\gamma_0$ (often implying a region of bistability).
\end{Theorem}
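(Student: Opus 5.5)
The plan is the classical route to the Hopf theorem: reduce to a two-dimensional center manifold, put the reduced system into normal form, and then find the periodic orbits as zeros of a scalar displacement function.

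\textbf{Step 1 (normalization).} Shift coordinates so the equilibrium is at the origin, $Y = X - X_0$, and let $\mu = \gamma-\gamma_0$. The hypotheses say $\pm i\omega_0$ are simple eigenvalues of $J$. By the implicit function theorem applied to the characteristic polynomial, they therefore continue smoothly to a pair $\lambda(\gamma)=\alpha(\gamma)\pm i\omega(\gamma)$ with $\alpha(\gamma_0)=0$ and $\alpha'(\gamma_0)\neq 0$, the latter being the crossing condition. Split $\mathbb{R}^N = E^c\oplus E^s$, where $E^c$ is the real two-dimensional eigenspace of $\pm i\omega_0$ and $E^s$ is the complementary stable subspace. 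This splitting is available because all other eigenvalues have negative real part.

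\textbf{Step 2 (center manifold reduction).} Append the trivial equation $\dot\mu=0$ and invoke the center manifold theorem for the extended system. This gives a locally invariant, locally attracting $C^k$ manifold $W^c$ of dimension $3$ in $(Y,\mu)$, which is a graph over $E^c\times\mathbb{R}$. Every small bounded orbit, in particular every small periodic orbit, lies on $W^c$. So it suffices to study a planar system depending on the parameter $\mu$. In complex coordinates $z$ on $E^c$, that system reads
\[ \dot z = \lambda(\mu) z + \sum_{2\le j+l\le 3} g_{jl}(\mu) z^j\bar z^l + O(|z|^4). \]

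\textbf{Step 3 (normal form).} Apply near-identity polynomial changes of variables to remove all quadratic terms and all cubic terms except the resonant one. This yields
\[ \dot z = \lambda(\mu) z + c_1(\mu) z|z|^2 + O(|z|^4). \]
The first Lyapunov coefficient is $\ell_1 = \Re c_1(0)/\omega_0$. In polar coordinates $z = re^{i\theta}$ this becomes
\[ \dot r = \alpha(\mu) r + \Re c_1(\mu)\, r^3 + O(r^4), \qquad \dot\theta = \omega(\mu)+O(r). \]

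\textbf{Step 4 (existence via the Poincar\'e map).} Since $\dot\theta\approx\omega_0>0$, the return map to the ray $\theta=0$ is well defined. Write the displacement as
\[ P(r,\mu) - r = r\,h(r,\mu), \qquad h(r,\mu) = \frac{2\pi}{\omega_0}\left(\alpha(\mu) + \Re c_1\, r^2\right) + O(r^3) + O(|\mu| r). \]
Nontrivial periodic orbits correspond to zeros of $h$ with $r>0$. Because $\partial_\mu h(0,0)=2\pi\alpha'(\gamma_0)/\omega_0\neq0$, the implicit function theorem gives a smooth curve $\mu=\mu(r)$ with $\mu(r) = -\Re c_1\, r^2/\alpha'(\gamma_0) + O(r^3)$. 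This produces a one-parameter family of periodic orbits with amplitude $B \sim \sqrt{|\gamma-\gamma_0|}$ and period $2\pi/\omega(\gamma) + O(B)\to T_0$. The leading term of the orbit is $B\cos(\omega t+\phi)$ times the eigenvector, which gives the stated expansion after undoing the coordinate changes.

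\textbf{Step 5 (direction and stability).} The sign of $\mu(r)$ shows on which side of $\gamma_0$ the cycles exist; this is the supercritical or subcritical dichotomy. The quantity $\partial_r(rh)$ at the cycle equals $2\Re c_1\, r^2+\dots$, so its sign gives the Floquet multiplier within $W^c$. Cycles are attracting when $\ell_1<0$ and repelling when $\ell_1>0$. Attraction of $W^c$ then transfers this stability to the full $N$-dimensional system.

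\textbf{Main obstacle.} I expect the main obstacle to be Step 4 when $\ell_1=0$, since then the expansion of $\mu(r)$ degenerates. Note, however, that existence does not need $\ell_1\neq0$: applying the implicit function theorem to $h$ uses only the crossing condition. Only the direction and stability claims require $\ell_1\neq 0$, and I would state that assumption explicitly. The secondary difficulty is bookkeeping the normal-form computation of $c_1$. I would quote the standard formula for $c_1$ in terms of the second and third derivatives of $H$ evaluated on the eigenvectors, rather than rederive it.
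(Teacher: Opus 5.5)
Your outline is correct and follows the route the paper has in mind: the paper omits the proof, remarking only that it "requires center manifold reduction and normal form calculations." Your Steps 1--5 carry out exactly that standard argument: the center manifold of the suspended system, the cubic normal form, the Poincar\'e displacement map solved by the implicit function theorem via the crossing condition, and the transfer of stability through attraction of $W^c$. One small sharpening of your final remark: besides direction and stability, writing the cycles as a function of $\gamma$ with $B\sim\sqrt{|\gamma-\gamma_0|}$ growing from $0$ on one side also needs $\ell_1\neq 0$, because without it you only get the amplitude-parametrized curve $\mu=\mu(r)$, which can be vertical (e.g.\ $\dot z=(\mu+i)z$, whose cycles exist only at $\mu=0$).
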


The proof of the Hopf bifurcation theorem is more involved, requiring center manifold reduction and normal form calculations, so we omit it. The key takeaway is that when an equilibrium’s stability switches via complex conjugate eigenvalues, oscillations result.

In a spatially extended chemotaxis system, a Hopf bifurcation in the well-mixed kinetics means the homogeneous solution will start oscillating in time. If diffusion is present, one can get spatially synchronized oscillations (whole domain oscillates in unison) or even spatiotemporal patterns like traveling waves or oscillating clusters. The concept of a \textbf{Hopf instability with diffusion} can also yield wave patterns if different spatial modes oscillate out of phase. In reaction–diffusion systems, a Hopf bifurcation with diffusion may lead to \emph{wave bifurcations} or even trigger spiral wave solutions in 2D. In chemotaxis, an example scenario could be: bacteria consume a nutrient and produce a waste; low nutrient eventually slows growth and cells spread, nutrient recovers, then cells aggregate again, producing population cycles that also have spatial structure. Each cycle might correspond to concentric rings of high density moving outward (a wave). 

From a mathematical perspective, analyzing Hopf patterns often involves looking at the full PDE’s spectrum including spatial modes. A \emph{Hopf (oscillatory) Turing instability} would be when a spatially non-uniform mode pair goes imaginary. That can lead to alternating patterns in time and space (like a Turing pattern that oscillates or swaps peaks and troughs periodically). Such complex patterns are beyond the basic scope, but the existence of Hopf bifurcation ensures at least homogeneous oscillations or small-amplitude wave trains exist near the threshold.

In conclusion, bifurcation and stability analysis tools allow us to map out the parameter regimes of chemotaxis models: we can predict where uniform states become unstable (and to what type of pattern), and thereby explain the mathematical origin of the diverse spatiotemporal behaviors seen in microbial populations (clustering, waves, oscillations). The next section will complement this theory with numerical simulations, which help explore strongly nonlinear regimes and validate the analytical predictions.

\subsection*{Stability Analysis of the Two-Species Keller--Segel System}

We consider the following one-dimensional chemotaxis system with two species \( u_1(x,t) \), \( u_2(x,t) \), and a common chemical signal \( v(x,t) \):
\begin{equation} \label{eq:system}
	\begin{aligned}
		\partial_t u_1 &= D_1 \partial_{xx} u_1 - \chi_1 \partial_x(u_1 \partial_x v), \\
		\partial_t u_2 &= D_2 \partial_{xx} u_2 - \chi_2 \partial_x(u_2 \partial_x v), \\
		\partial_t v   &= D_v \partial_{xx} v + \alpha_1 u_1 + \alpha_2 u_2 - \beta v.
	\end{aligned}
\end{equation}

Let us denote a spatially homogeneous steady state by \( (u_1^*, u_2^*, v^*) \), where \( u_1^*, u_2^* > 0 \) are constants and \( v^* = (\alpha_1 u_1^* + \alpha_2 u_2^*)/\beta \). We now study the linear stability of this steady state to small perturbations.

\subsubsection*{Theorem (Linear Stability Criterion)}
Let \( \chi_1, \chi_2 \in \mathbb{R} \), and suppose the homogeneous steady state \( (u_1^*, u_2^*, v^*) \) satisfies the above system. Then small perturbations of the form
\[
\begin{aligned}
	u_1(x,t) &= u_1^* + \varepsilon \hat{u}_1 e^{\lambda t} \cos(kx), \\
	u_2(x,t) &= u_2^* + \varepsilon \hat{u}_2 e^{\lambda t} \cos(kx), \\
	v(x,t)   &= v^* + \varepsilon \hat{v} e^{\lambda t} \cos(kx),
\end{aligned}
\]
grow in time (i.e., the steady state is linearly unstable) if the real part of any eigenvalue \( \lambda(k) \) of the resulting linearized system is positive for some wave number \( k \in \mathbb{R}^+ \).

\subsubsection*{Proof}
We linearize the system \eqref{eq:system} around the steady state using small perturbations:
\[
u_1 = u_1^* + \tilde{u}_1, \quad u_2 = u_2^* + \tilde{u}_2, \quad v = v^* + \tilde{v},
\]
and retain only terms up to first order in \( \tilde{u}_1, \tilde{u}_2, \tilde{v} \). The linearized system becomes:
\[
\begin{aligned}
	\partial_t \tilde{u}_1 &= D_1 \partial_{xx} \tilde{u}_1 - \chi_1 u_1^* \partial_{xx} \tilde{v}, \\
	\partial_t \tilde{u}_2 &= D_2 \partial_{xx} \tilde{u}_2 - \chi_2 u_2^* \partial_{xx} \tilde{v}, \\
	\partial_t \tilde{v} &= D_v \partial_{xx} \tilde{v} + \alpha_1 \tilde{u}_1 + \alpha_2 \tilde{u}_2 - \beta \tilde{v}.
\end{aligned}
\]

We seek solutions of the form:
\[
\tilde{u}_1 = \hat{u}_1 e^{\lambda t} \cos(kx), \quad
\tilde{u}_2 = \hat{u}_2 e^{\lambda t} \cos(kx), \quad
\tilde{v} = \hat{v} e^{\lambda t} \cos(kx).
\]

Substituting into the linearized system, using \( \partial_{xx} \cos(kx) = -k^2 \cos(kx) \), gives the eigenvalue problem:
\[
\begin{bmatrix}
	-D_1 k^2 & 0 & \chi_1 u_1^* k^2 \\
	0 & -D_2 k^2 & \chi_2 u_2^* k^2 \\
	\alpha_1 & \alpha_2 & -D_v k^2 - \beta
\end{bmatrix}
\begin{bmatrix}
	\hat{u}_1 \\ \hat{u}_2 \\ \hat{v}
\end{bmatrix}
= \lambda
\begin{bmatrix}
	\hat{u}_1 \\ \hat{u}_2 \\ \hat{v}
\end{bmatrix}.
\]

Let \( A(k) \) denote the matrix above. The eigenvalues \( \lambda(k) \) are the roots of the characteristic polynomial \( \det(A(k) - \lambda I) = 0 \). The steady state is unstable if \( \text{Re}(\lambda(k)) > 0 \) for some \( k > 0 \).

Although an explicit formula for the roots is unwieldy, instability arises when the chemotactic sensitivities \( \chi_1 u_1^* \) and/or \( \chi_2 u_2^* \) are sufficiently large relative to diffusion. Notably, mixed-sign chemotaxis (e.g., \( \chi_1 > 0 \), \( \chi_2 < 0 \)) can generate oscillatory and unstable modes, leading to complex spatiotemporal dynamics.

\hfill \qedsymbol

%%%%%%%%%%%%%%%%%%%%%%%%%%%%%%%%%%%%%%%%%%%%%

%\subsection*{Extension: Nonlinear Stability and Pattern Formation}

\subsubsection*{Nonlinear Stability and Lyapunov Functional}
In certain regimes, nonlinear stability of steady states can be investigated using energy-like functionals. For the classical Keller--Segel system, a Lyapunov functional can sometimes be constructed to show that solutions decay to steady states over time. However, in multi-species systems, constructing such a functional is significantly more difficult due to cross-chemotactic interactions.

Consider the Lyapunov candidate:
\begin{equation}
	\mathcal{L}[u_1, u_2, v] = \int_{\Omega} \left( u_1 \log u_1 + u_2 \log u_2 + \frac{1}{2} |\nabla v|^2 + \frac{\beta}{2 D_v} v^2 - \frac{\alpha_1}{D_v} u_1 v - \frac{\alpha_2}{D_v} u_2 v \right) \, dx.
\end{equation}
This functional combines entropy-like terms for \( u_1, u_2 \), diffusion energy for \( v \), and chemotactic interaction terms. Its time derivative (assuming zero-flux boundary conditions) yields:
\begin{align}
	\frac{d\mathcal{L}}{dt} &= \int_{\Omega} \left( \partial_t u_1 \log u_1 + \partial_t u_2 \log u_2 + \nabla v \cdot \nabla \partial_t v + \frac{\beta}{D_v} v \partial_t v - \frac{\alpha_1}{D_v} v \partial_t u_1 - \frac{\alpha_1}{D_v} u_1 \partial_t v \right. \nonumber \\
	&\left. - \frac{\alpha_2}{D_v} v \partial_t u_2 - \frac{\alpha_2}{D_v} u_2 \partial_t v \right) dx.
\end{align}
While this expression does not generally guarantee monotonic decay, for special parameter regimes (e.g., \( \chi_2 = -\chi_1 \), symmetric interactions), it can be bounded from below and provide insight into long-term dynamics.

%\subsubsection*{Turing Instability and Bifurcation Insight}
Let us briefly discuss the potential for diffusion-driven (Turing) instability in the system. In the absence of chemotaxis (\( \chi_1 = \chi_2 = 0 \)), the homogeneous steady state is stable. Introducing chemotaxis introduces off-diagonal couplings in the linearized Jacobian that may destabilize the system for certain wave numbers \( k \).

This mechanism is analogous to a Turing instability, where diffusion and taxis interact to destabilize an otherwise stable equilibrium. As the chemotactic coefficients increase past a critical threshold, a pitchfork or Hopf bifurcation may occur, leading to the onset of: 
Stationary spatial patterns (Turing structures), or
Oscillatory/chaotic behavior (via Hopf or secondary bifurcations).

\subsection{ Numerical Simulation Techniques}

Because chemotaxis–reaction systems are generally nonlinear and can develop sharp gradients (or singularities), numerical simulations are indispensable for exploring their behavior beyond the reach of analytic solutions. Here we outline several computational approaches and discuss theoretical guarantees (convergence, stability) associated with them.

\paragraph{Finite Difference and Finite Element Methods.} A straightforward way to simulate \eqref{eq:KS1} is to discretize space and time on a grid. In a \textbf{finite difference method} \cite{Xu2025}, one might use a uniform spatial mesh $x_i$ and approximate $\partial_x u$ and $\partial_{xx}u$ with difference quotients e.g. \begin{equation}
	\partial_{xx} u \approx \frac{u_{i-1}-2u_i+u_{i+1}}{\Delta x^2}\;\; \text{in (1D)}.
\end{equation} 
Time stepping can be done explicitly (forward Euler) or implicitly (backward Euler / Crank–Nicolson), or a combination (IMEX schemes) to handle stiffness. The chemotaxis term $\nabla\!\cdot(u\nabla v)$ is nonlinear and can cause steep moving fronts, so a careful discretization is needed to maintain stability and positivity ($u$ should remain $\ge0$). For instance, an explicit scheme for the $u$-equation in 1D might be:
\begin{equation}
	\begin{split}
		 u_i^{n+1}& = u_i^n + \Delta t \Big[D\,\frac{u_{i-1}^n - 2u_i^n + u_{i+1}^n}{\Delta x^2}\\
		 &\;\;\;\; - \chi\,\frac{1}{2\Delta x}\Big((u^n_{i+1}+u^n_i)(v^n_{i+1}-v^n_i) - (u^n_{i}+u^n_{i-1})(v^n_{i}-v^n_{i-1})\Big)
		 + f(u_i^n,v_i^n)\Big], 
	\end{split}
\end{equation}
and similarly $v$ is updated. The particular form for the chemotactic flux (an upwinding or symmetric average) is chosen to minimize numerical instability and ensure no artificial oscillations.

Stability of such schemes often requires a \textbf{CFL condition}: the time step $\Delta t$ must be sufficiently small relative to $(\Delta x)^2$ to control the diffusion and chemotaxis terms. A rough stability requirement (from von Neumann analysis) for the linear diffusion part is $\Delta t \le \frac{(\Delta x)^2}{2D}$ in 1D. The chemotaxis term, being an advection-like nonlinear term, typically imposes a constraint like $\Delta t < C \frac{(\Delta x)^2}{\chi \, \max u}$ (ensuring that the advective Courant number stays $\lesssim 1$ after accounting for $u$ coupling). If an explicit method is too restrictive, implicit or semi-implicit methods are used for the diffusion terms to allow larger $\Delta t$. A fully implicit scheme can be unconditionally stable in theory, but requires solving nonlinear algebraic equations at each step (which can be done with Newton’s method).

From a theoretical standpoint, one seeks \textbf{convergence}: as $\Delta x, \Delta t \to 0$, does the numerical solution approach the true PDE solution? The \textbf{Lax Equivalence Theorem} provides guidance for linear problems: it states that for a well-posed linear PDE, consistency (the scheme approximates the PDE to leading order) plus stability (no growing modes under refinement) implies convergence. Many schemes for chemotaxis are nonlinear, but a similar principle holds: one must control numerical stability (often via discrete analogues of energy estimates). For example, one can prove that if the scheme preserves mass and positivity and if $\Delta t$ satisfies a CFL condition, then the discrete solution remains bounded and converges to a weak solution of the PDE as the mesh refines.

Finite element methods (FEM) offer an alternative, especially useful on complex geometries. In FEM, we formulate the weak form: e.g. for the $u$-equation, 
\begin{equation}
	\int_\Omega \phi \partial_t u + D \int_\Omega \nabla \phi \cdot \nabla u - \chi \int_\Omega \nabla \phi \cdot (u \nabla v) = \int_\Omega \phi f(u,v) 
\end{equation}
for all test functions $\phi(x)$. We then approximate $u(x,t)$ and $v(x,t)$ by linear combinations of basis functions on a spatial mesh (triangles or tetrahedra in 2D/3D). This yields a system of ODEs in time for the coefficients, which can be integrated with standard ODE solvers. FEM has the advantage of easily handling Neumann or Dirichlet boundaries and adaptively refining the mesh where needed. The method is inherently mass-conserving in that $\int u$ is preserved if one chooses $\phi=1$ as a test function for Neumann BC. There are known \textit{a priori} error estimates: for instance, if the true solution $u$ is smooth, using piecewise linear elements on mesh size $h$ typically gives $O(h^2)$ convergence in $L^2$ norm for diffusion problems. Chemotaxis being nonlinear can complicate error analysis, but one can often still derive error bounds assuming the solution remains bounded and away from degenerate values.

One difficulty in both FDM and FEM is maintaining $u(x,t)\ge0$. Negative values can appear due to numerical oscillations, especially near sharp gradients. Smoothing or slope-limiter techniques are sometimes applied, or one uses a \textbf{positivity-preserving scheme} (certain upwind discretizations can guarantee $u^n_i \ge0$ if $u^0_i\ge0$ and CFL is satisfied). This is important for physical fidelity because negative cell density is not meaningful.

\paragraph{Adaptive Mesh, Time-Stepping, and Spectral Splitting Methods.} 
Chemotactic patterns often involve sharply localized structures such as steep traveling fronts or narrow spikes of high cell density. To resolve these efficiently, \textbf{adaptive mesh refinement (AMR)} strategies dynamically allocate finer spatial resolution in regions of high gradient or curvature, while using coarser grids where the solution is smooth. For example, when an aggregation begins to form at some point $x_0$, AMR techniques refine the mesh around $x_0$ to accurately capture the steep increase in $u(x,t)$ without resorting to a uniformly fine grid. Refinement is often driven by \textit{a posteriori} error indicators, such as the magnitude of the second derivative or gradient of $u$. Mathematically, one may refine when $|\nabla u|$ or $|\Delta u|$ exceed prescribed thresholds, and coarsen otherwise. Maintaining numerical stability and conservation during mesh transitions is critical; conservative interpolation and flux-corrected transport techniques are often used to mitigate artificial oscillations.

Adaptive \textbf{time-stepping} is equally important, particularly in chemotaxis systems where rapid transients or blow-up may occur. Early in simulations, when patterns are slowly forming, larger $\Delta t$ can be used. As steep gradients develop, $\Delta t$ must be reduced to maintain accuracy and stability. Adaptive schemes based on embedded Runge–Kutta methods or backward differentiation formulas (BDF) adjust time steps using local truncation error estimates. For PDEs, additional criteria such as CFL-like conditions or total variation in time can inform step size control.

To complement spatial adaptivity, \textbf{spectral splitting techniques} such as the \emph{Split-Step Fourier Method (SSFM)} are increasingly used for chemotaxis–reaction systems. SSFM leverages the efficiency of spectral methods for the diffusion and chemotaxis terms, solving linear components in Fourier space and nonlinear reaction terms in physical space. This allows high accuracy with relatively few modes, especially in periodic or large domains where spectral discretization is natural. For problems exhibiting near-singular dynamics, spectral accuracy of SSFM can outperform traditional finite difference or finite element schemes, provided sufficient resolution is maintained through adaptivity or appropriate dealiasing. Moreover, SSFM naturally aligns with adaptive time-stepping, as the time-splitting structure facilitates decoupling of fast and slow dynamics, improving stability when resolving stiff chemotaxis-driven evolution.

While rigorous convergence results for adaptive SSFM-based schemes remain limited due to mesh-changing and operator-splitting intricacies, empirical studies confirm their robustness. In chemotaxis models with blow-up potential, combining adaptive spatial refinement, time-step control, and spectral splitting enables precise resolution of aggregation dynamics, often revealing self-similar collapse profiles consistent with analytical predictions.

\subsection{Split-Step Fourier Method for the Chemotaxis--Reaction System}
We consider the Keller--Segel-type chemotaxis--reaction model given by
\begin{align}\label{eq:KS2}
	\partial_t u(x,t) &= D \,\Delta u \;-\; \chi\, \nabla \cdot \big(u\,\nabla v\big)\;+\;r\,u\left(1 - \frac{u}{K}\right), \\[1ex]
	\partial_t v(x,t) &= D_v\,\Delta v \;+\; \alpha u - \beta v, \nonumber
\end{align}
for $x \in \Omega \subset \mathbb{R}^n$, $t > 0$, with homogeneous Neumann boundary conditions:
\[
\frac{\partial u}{\partial n} = \frac{\partial v}{\partial n} = 0 \quad \text{on } \partial \Omega.
\]

\subsubsection*{Operator Splitting Strategy}

The Split-Step Fourier Method (SSFM) is an efficient time-integration technique for solving nonlinear PDEs by decoupling the linear and nonlinear components. For the system~\eqref{eq:KS1}, we rewrite each equation in the abstract form:
\[
\partial_t w = \mathcal{L}(w) + \mathcal{N}(w),
\]
where $w = (u, v)$, and the operators $\mathcal{L}$ and $\mathcal{N}$ represent the linear (diffusion and degradation) and nonlinear (chemotaxis, growth, production) components, respectively.

\subsubsection*{Fourier Spectral Discretization}

Let $\Omega = [0, L]^n$ with periodic boundary conditions for spectral approximation (Neumann conditions can be approximated via cosine transforms or extended with even symmetry). Denote the discrete spatial grid by $x_j$ with spacing $\Delta x$ and total number of points $N$ in each direction. The Fourier transform of a function $f(x)$ is defined as:
\[
\widehat{f}(k) = \sum_{j=0}^{N-1} f(x_j) e^{-2\pi i k x_j / L}, \quad
f(x_j) = \sum_{k=-N/2}^{N/2-1} \widehat{f}(k) e^{2\pi i k x_j / L}.
\]

Let $\Delta$ denote the Laplacian. In Fourier space, the Laplacian becomes multiplication by $-|\mathbf{k}|^2$, where $\mathbf{k}$ is the wavevector:
\[
\widehat{\Delta f}(\mathbf{k}) = -|\mathbf{k}|^2 \widehat{f}(\mathbf{k}), \quad \text{with } |\mathbf{k}|^2 = \sum_{i=1}^{n} \left(\frac{2\pi k_i}{L}\right)^2.
\]

\subsubsection*{Time Splitting and Evolution Steps}

We evolve the system over one time step $\Delta t$ by splitting into two sub-steps:

\paragraph{Step 1: Linear evolution (diffusion and degradation).}

For the linear part:
\[
\partial_t u = D \Delta u, \qquad
\partial_t v = D_v \Delta v - \beta v,
\]
we solve in Fourier space. Let $\widehat{u}^n$, $\widehat{v}^n$ be the Fourier transforms at time $t_n$. The linear evolution from $t_n$ to $t_{n+1}$ is given by:
\begin{align*}
	\widehat{u}^{*} &= e^{-D|\mathbf{k}|^2 \Delta t}\, \widehat{u}^n, \\
	\widehat{v}^{*} &= e^{-(D_v|\mathbf{k}|^2 + \beta)\Delta t}\, \widehat{v}^n.
\end{align*}

Then we take the inverse Fourier transform to recover $u^{*}$ and $v^{*}$ in physical space.

\paragraph{Step 2: Nonlinear evolution (chemotaxis, reaction, production).}

The nonlinear part is treated in real space using an explicit or semi-implicit scheme. For the $u$-equation:
\[
\partial_t u = -\chi\, \nabla \cdot (u \nabla v) + r u\left(1 - \frac{u}{K} \right),
\]
and for the $v$-equation:
\[
\partial_t v = \alpha u.
\]

These can be advanced by a simple explicit Euler step:
\begin{align*}
	u^{n+1} &= u^{*} + \Delta t\left[ -\chi \nabla \cdot (u^{*} \nabla v^{*}) + r u^{*}\left(1 - \frac{u^{*}}{K} \right) \right], \\
	v^{n+1} &= v^{*} + \Delta t\, \alpha u^{*}.
\end{align*}

Gradients and divergence in the nonlinear chemotaxis term are computed using spectral differentiation:
\[
\mathcal{F}\left[ \frac{\partial f}{\partial x_i} \right] = i k_i \widehat{f},
\]
with the product $u^* \nabla v^*$ computed in real space and then differentiated via FFTs.

\subsubsection*{Algorithm Summary}

At each time step:

\begin{enumerate}
	\item Compute the Fourier transforms $\widehat{u}^n$, $\widehat{v}^n$.
	\item Apply linear evolution in Fourier space:
	\[
	\widehat{u}^{*} = e^{-D|\mathbf{k}|^2 \Delta t} \widehat{u}^n, \quad
	\widehat{v}^{*} = e^{-(D_v|\mathbf{k}|^2 + \beta)\Delta t} \widehat{v}^n.
	\]
	\item Transform back to real space to get $u^*$, $v^*$.
	\item Evaluate nonlinear terms and advance via explicit step:
	\begin{align*}
		u^{n+1} &= u^{*} + \Delta t\left[ -\chi \nabla \cdot (u^{*} \nabla v^{*}) + r u^{*}\left(1 - \frac{u^{*}}{K} \right) \right], \\
		v^{n+1} &= v^{*} + \Delta t\, \alpha u^{*}.
	\end{align*}
	\item Repeat for the next time step.
\end{enumerate}

\subsubsection*{Remarks on Implementation}

\begin{itemize}
	\item For stability, the linear step can be treated exactly in Fourier space, while the nonlinear step must satisfy a Courant-type condition.
	\item Aliasing errors due to nonlinear products can be reduced using a 2/3 dealiasing rule.
	\item Homogeneous Neumann conditions can be approximated via cosine transforms or enforced via mirroring methods.
	\item The SSFM naturally extends to 2D and 3D using multidimensional FFTs and vectorized operations.
\end{itemize}

This method provides a high-order, efficient framework for simulating chemotactic and reactive dynamics with sharp interfaces and complex pattern formation in both biological and physical systems.

\begin{center}
	\begin{tikzpicture}[node distance=1.8cm]
		
		\node[io] (start) {Initial data\\ $u^n$, $v^n$};
		\node[process, right=of start] (fft) {Fourier Transform\\ $\mathcal{F}[u^n], \mathcal{F}[v^n]$};
		\node[process, below=of fft] (linear) {Linear evolution in Fourier space\\ $\hat{u}^* = e^{-Dk^2 \Delta t}\hat{u}^n$};
		\node[process, below=of linear] (ifft) {Inverse FFT\\ $u^*, v^*$};
		\node[process, below=of ifft] (nonlinear) {Nonlinear step in real space\\ Chemotaxis \& reaction};
		\node[process, below=of nonlinear] (update) {Update fields\\ $u^{n+1}, v^{n+1}$};
		
		\draw[->] (start) -- (fft);
		\draw[->] (fft) -- (linear);
		\draw[->] (linear) -- (ifft);
		\draw[->] (ifft) -- (nonlinear);
		\draw[->] (nonlinear) -- (update);
		
	\end{tikzpicture}
\end{center}

\begin{algorithm}[ht]
	\caption{Split-Step Fourier Method for the Keller--Segel--Reaction Model}
	\begin{algorithmic}[1]
		\State \textbf{Initialize:} spatial domain $(x,y) \in [-L/2,L/2]^2$, time step $\Delta t$, final time $T$
		\State Define wavenumbers $k_x$, $k_y$ and meshgrid $(X,Y)$
		\State Set initial conditions $u_0(x,y)$ and $v_0(x,y)$
		\For{$n = 1$ to $N_t = T/\Delta t$}
		\State Compute Fourier transforms: $\hat{u} = \mathcal{F}[u]$, $\hat{v} = \mathcal{F}[v]$
		\State Diffuse via linear step: 
		\[
		\hat{u} \leftarrow \hat{u} \cdot e^{-D_u(k_x^2 + k_y^2)\Delta t}, \quad 
		\hat{v} \leftarrow \hat{v} \cdot e^{-D_v(k_x^2 + k_y^2)\Delta t}
		\]
		\State Invert to real space: $u \leftarrow \mathcal{F}^{-1}[\hat{u}],\quad v \leftarrow \mathcal{F}^{-1}[\hat{v}]$
		\State Compute gradients $\nabla v = (\partial_x v, \partial_y v)$ using spectral derivatives
		\State Compute chemotactic flux: $\nabla \cdot (u \nabla v)$
		\State Update nonlinear step:
		\[
		u \leftarrow u + \Delta t \left[-\chi \nabla \cdot (u \nabla v) + f(u,v)\right], \quad
		v \leftarrow v + \Delta t \cdot g(u,v)
		\]
		\EndFor
	\end{algorithmic}
\end{algorithm}

\begin{algorithm}[ht]
	\caption{ETDRK4 Scheme for the Keller--Segel--Reaction Model}
	\begin{algorithmic}[1]
		\State \textbf{Initialize:} spatial domain, wavenumbers, initial fields $u_0$, $v_0$
		\State Define linear operators $L_u = -D_u(k_x^2 + k_y^2)$, $L_v = -D_v(k_x^2 + k_y^2)$
		\State Precompute:
		\[
		E_u = e^{L_u \Delta t},\quad E_v = e^{L_v \Delta t},\quad
		\phi(L) = \frac{e^{L \Delta t} - 1}{L}
		\]
		\For{$n = 1$ to $N_t$}
		\State Compute nonlinear term: 
		\[
		N_1^u = -\chi \nabla \cdot (u \nabla v) + f(u,v), \quad N_1^v = g(u,v)
		\]
		\State Step 1: $u_1 = \mathcal{F}^{-1}\left[E_{u/2} \hat{u} + \frac{\Delta t}{2} \mathcal{F}[N_1^u]\right]$, and similarly for $v_1$
		\State Step 2: compute $N_2^u, N_2^v$ at $u_1$, $v_1$
		\State Step 3: compute $u_2$, $v_2$ as above with $N_2$
		\State Step 4: compute $N_3^u$, $N_3^v$ at $u_2$, $v_2$
		\State Step 5: compute $u_3$, $v_3$, then $N_4^u$, $N_4^v$
		\State Final update:
		\[
		\hat{u}_{n+1} = E_u \hat{u}_n + \frac{\Delta t}{6} \mathcal{F}\left[N_1^u + 2N_2^u + 2N_3^u + N_4^u\right]
		\]
		\[
		u_{n+1} = \mathcal{F}^{-1}[\hat{u}_{n+1}],\quad v_{n+1} = \mathcal{F}^{-1}[\hat{v}_{n+1}]
		\]
		\EndFor
	\end{algorithmic}
\end{algorithm}

To accurately simulate the dynamics of the Keller--Segel--reaction model, we implement and compare two advanced time-integration schemes: the Split-Step Fourier Method (SSFM) and the Exponential Time-Differencing Runge--Kutta method of order four (ETDRK4). Both approaches leverage the efficiency of spectral discretization in space via the Fast Fourier Transform (FFT), which naturally assumes periodic boundary conditions and yields highly accurate approximations for smooth solutions.

\paragraph{Split-Step Fourier Method (SSFM):}
The SSFM decomposes the evolution operator into linear (diffusion) and nonlinear (chemotaxis and reaction) parts. At each time step, the linear part is solved exactly in Fourier space, while the nonlinear part is updated explicitly in physical space. This operator splitting technique is straightforward to implement and computationally efficient, particularly for systems with dominant diffusion. However, its accuracy is limited by the order of the time-stepping used for the nonlinear part (typically first- or second-order), and it may suffer from splitting errors for strongly coupled terms.

\paragraph{Exponential Time-Differencing Runge--Kutta (ETDRK4):}
The ETDRK4 method integrates the linear part exactly while applying a fourth-order Runge--Kutta scheme to the nonlinear terms. This approach significantly improves temporal accuracy and stability, especially for stiff systems where high diffusion rates or sharp gradients are present. It also reduces numerical dispersion and allows for larger time steps compared to standard explicit methods. The ETDRK4 method is more complex to implement due to the need for precomputing matrix exponentials and handling stiffness carefully, but it is highly suitable for long-time integrations of reaction--diffusion--chemotaxis models.

While both SSFM and ETDRK4 effectively exploit the Fourier spectral framework, the ETDRK4 method provides superior accuracy and stability, making it preferable for resolving fine-scale spatiotemporal dynamics such as traveling waves, aggregation rings, and pattern formation. The SSFM remains a valuable choice for rapid prototyping and for problems where moderate accuracy suffices.

\paragraph{Hybrid Particle–Continuum Approaches.} In some situations, treating cells as a continuum density $u(x,t)$ is not valid everywhere—e.g., at very low cell numbers, stochastic effects or discrete motion become important. \textbf{Hybrid methods} combine particle-based simulation with continuum PDEs. One common approach is to split the domain into regions: where $u$ is above a certain threshold (many cells, behaving collectively), use the PDE model; where $u$ is very low, represent individual cells as particles (or agent-based random walkers). The chemical field $v(x,t)$ can still be modeled as a continuum (since even a few cells can emit chemical that diffuses). At the interface between discrete and continuum regions, one must consistently exchange information: particles entering the continuum region are added to $u$, and conversely, $u$ spawning very low values might be converted into a few particles. Mathematically, this is challenging because one must ensure mass conservation and no artificial discontinuities at the interface. However, algorithms have been devised where, for example, the flux of cells across the interface is handled by a probabilistic spawning of particles that matches the PDE’s flux.

Hybrid methods can drastically reduce computational cost when, say,  cells are mostly sparse but occasionally cluster. They also naturally incorporate \textbf{stochasticity}: random motility and demography can be included on the particle side, which is important for modeling intrinsic noise (e.g. in Section~6, stochastic chemotaxis is mentioned as an open problem). 

From a theoretical perspective, hybrid methods are usually validated by showing that in the limit of large particle numbers, the hybrid scheme converges to the fully continuum model, and in the limit of small densities it behaves like a pure jump process. Ensuring accuracy often requires smoothing the transition (for instance, using a buffer zone where both particles and continuum overlap consistently). While convergence proofs are scarce, numerical tests against fully continuum or fully discrete benchmarks instill confidence in these approaches.

\paragraph{Numerical Challenges and Validation.} Simulating chemotaxis–reaction systems must balance accuracy, stability, and efficiency. High-order methods (like spectral methods or high-order finite elements) can be employed for smoother solutions, but near blow-up or shock formation, they may generate spurious oscillations (Gibbs phenomenon). Thus, careful filtering or shock-capturing techniques might be needed.

It is also crucial to validate numerical results by refining the mesh and time step and checking that qualitative features (like pattern wavelength or amplitude) converge. When simulating known scenarios (e.g. the critical mass blow-up in 2D), numerical solutions have been able to estimate blow-up times and locations which match analytical predictions (within error bounds). For pattern formation beyond linear theory, simulations have confirmed, for instance, the selected wavelength of the pattern agrees with the linear fastest-growing mode. They also help explore the stability of patterns: one can perturb a numerically obtained steady pattern slightly and see if it returns to the same pattern (stable) or transitions to a different one.

In summary, numerical methods form a vital complement to analysis for chemotaxis models. Techniques like finite differences/elements, adaptive meshing, and hybrid particle-PDE simulation, supported by theoretical results (like Lax’s theorem for convergence and CFL stability criteria), enable us to explore the rich dynamical behavior of chemotactic systems. These simulations underpin the applications discussed in Section~5, where patterns observed in experiments are compared with model outputs, and they allow probing regimes (e.g. far into the nonlinear blow-up or very fine pattern structures) that are analytically intractable.

%%%%%%%%%%%%%%2D chemotaxis–fluid coupling %%%%%%%%%%%%%%%%%%%%%%
\subsection{Numerical Results and Spatiotemporal Dynamics for equation (\ref{eq:KS1}) in 1D}
We numerically simulate the one-dimensional Keller--Segel chemotaxis–reaction model beyond the Hopf bifurcation threshold ($\chi = 0.5$), using forward Euler time stepping and Neumann boundary conditions. The initial data are small perturbations around the uniform steady state $(u,v) = (0.5,0.5)$.

Figures~\ref{Fig:1Deq1} (a) and (b) depict the spatial profiles of $u(x,T)$ and $v(x,T)$ at final simulation time $T=200$. The $u$ profile shows multiple peaks and troughs, indicating non-uniform population clustering, while $v$ remains smoother but still reflects the influence of the underlying cellular distribution. These plots confirm that the system settles into a nonlinear, oscillatory regime rather than reaching a steady equilibrium.

Figure~\ref{Fig:1Deq1} (c) and (d) show spatiotemporal plots of the population density $u(x,t)$ and the chemoattractant concentration $v(x,t)$, respectively. Both components exhibit persistent oscillatory dynamics in time and localized patterning in space, a hallmark of Hopf-driven instabilities. These patterns suggest complex temporal aggregation behavior of microbial populations, consistent with experimental observations in chemotactic bacteria.

Such behaviors demonstrate the interplay of diffusion, chemotaxis, and nonlinear reactions, and validate that Hopf bifurcation plays a fundamental role in driving temporal instabilities in chemotaxis–reaction systems.

\begin{figure}[htbp]
	\centering
	\includegraphics[width=0.49\textwidth]{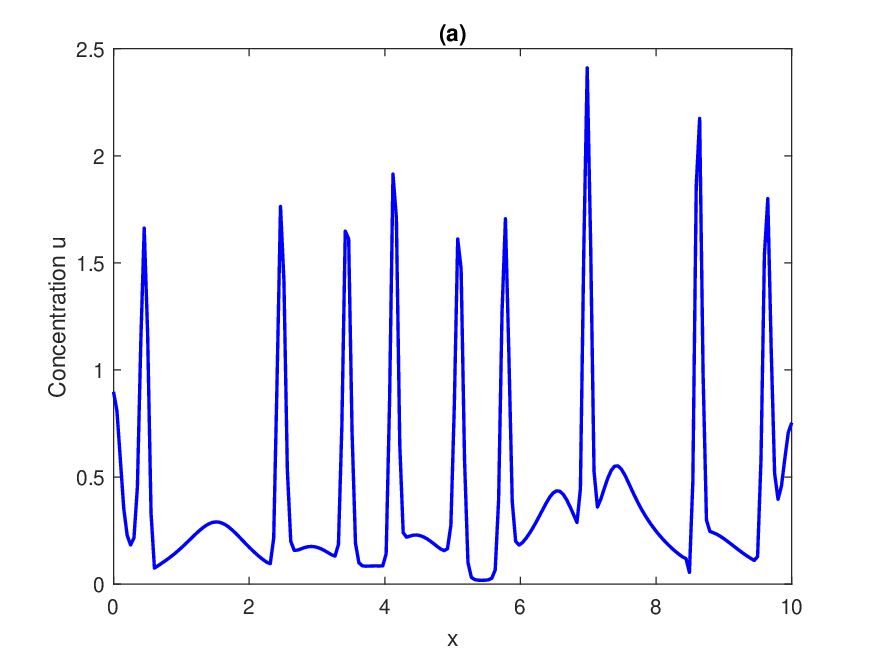}
	\includegraphics[width=0.49\textwidth]{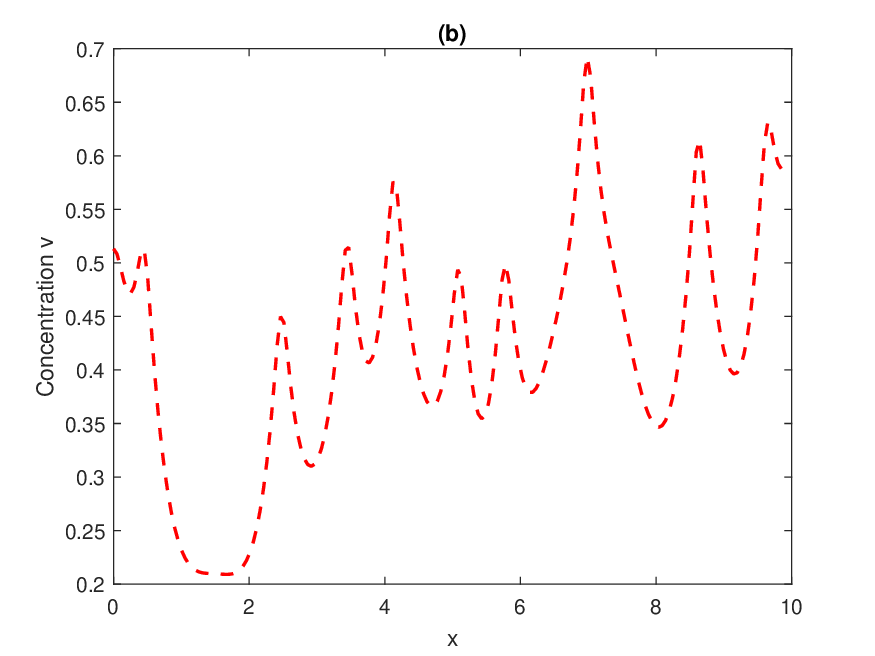}
	\includegraphics[width=0.49\textwidth]{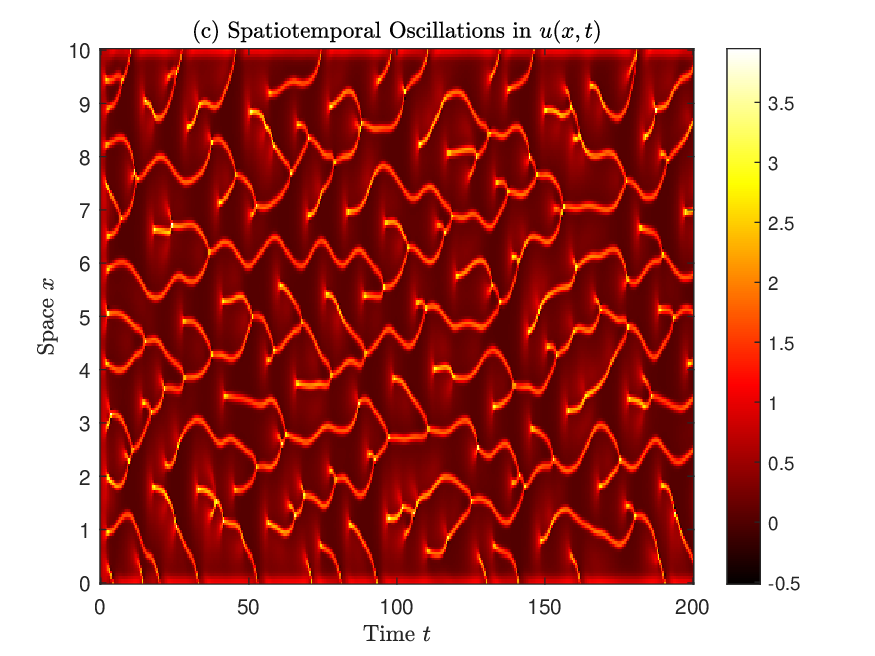}
	\includegraphics[width=0.49\textwidth]{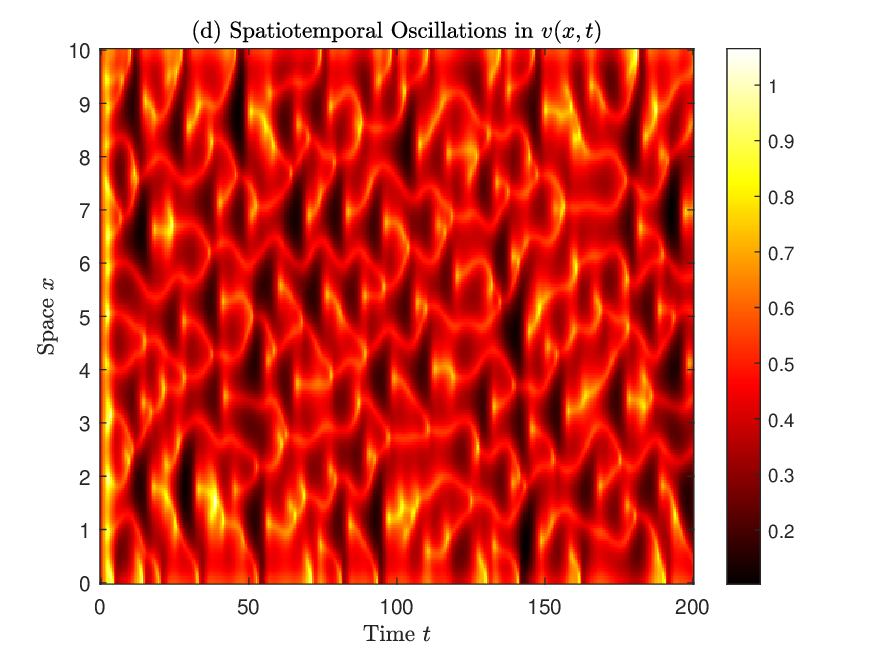}
	\caption{Plot (a) represents final spatial profile $u(x,T)$ at $T=200$. Peaks represent localized population clusters emerging from chemotactic aggregation. Plot (b) is the final spatial profile $v(x,T)$ at $T=200$. The chemoattractant profile reflects the population distribution but remains more diffused. Plots (c-d) Spatiotemporal evolution of cell density $u(x,t)$ and chemoattractant $v(x,t)$ beyond the Hopf bifurcation threshold $\chi = 0.5$, showing oscillatory pattern formation. }\label{Fig:1Deq1}
\end{figure}

\subsection{Numerical Simulation of the 2D Keller–Segel Chemotaxis Model (\ref{eq:KS1}) }

We performed numerical simulations of the two-dimensional Keller–Segel chemotaxis model with logistic growth and signal degradation, implemented using finite differences with periodic boundary conditions. The computational domain was set to $[0, L]^2$ with $L = 5$, and the spatial resolution was taken as $N_x = N_y = 256$. The initial bacterial distribution $u(x, y, 0)$ was prescribed as a Gaussian perturbation with added noise, centered at the domain midpoint, while the initial chemical concentration $v(x, y, 0)$ was chosen as a centered Gaussian without noise.

To ensure numerical stability and biologically realistic dynamics, the chemotactic sensitivity $\chi$ and logistic growth rate $r$ were moderately reduced. A small time step $\Delta t = 2 \times 10^{-4}$ was employed over a final times $T = 0.20$ (for top-plot) and $T = 50$ (for the bottom-plot) as displayed in Figure \ref{Fig:keller_segel_2D}.

\begin{figure}[h!]
	\centering
	\includegraphics[width=0.99\textwidth]{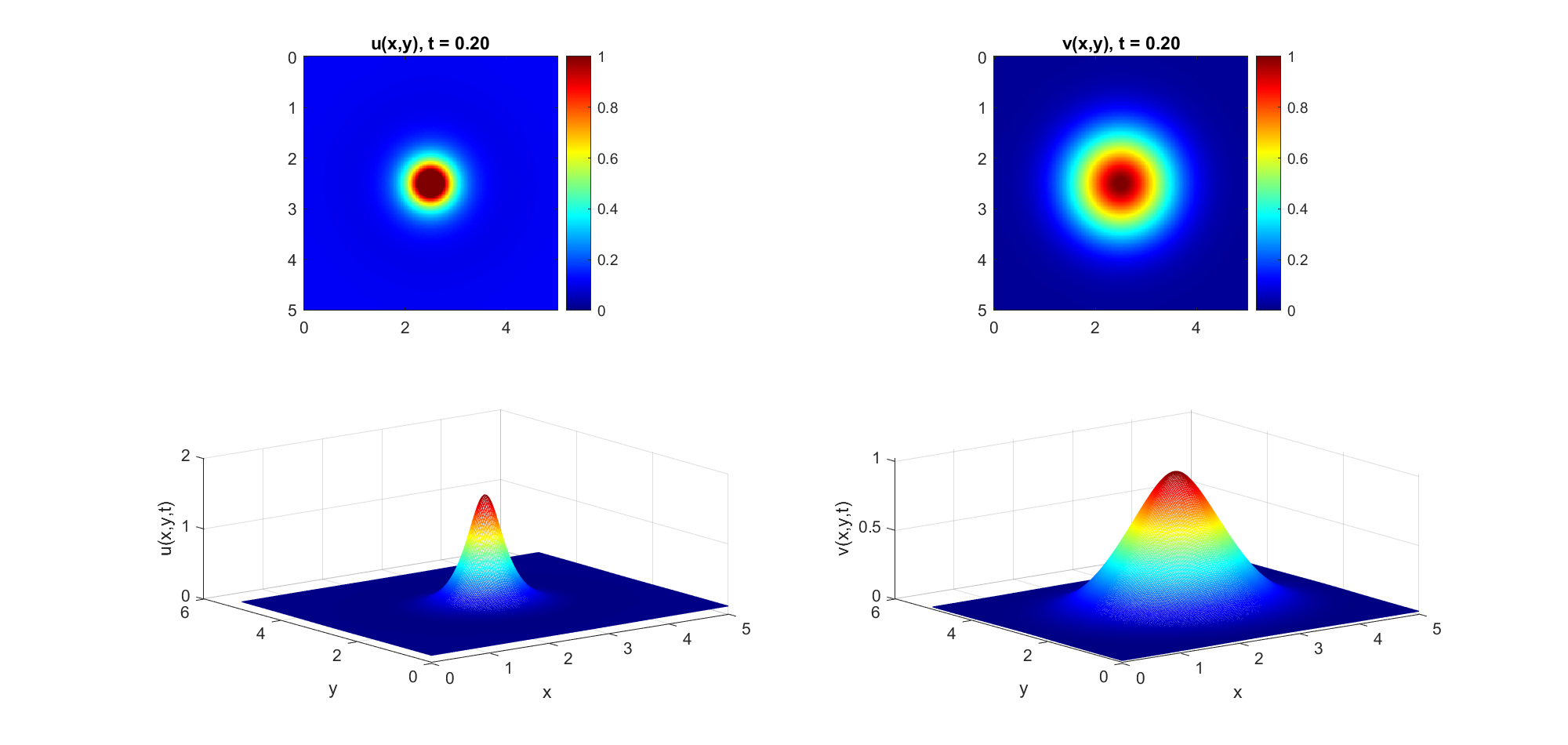}
	\includegraphics[width=0.99\textwidth]{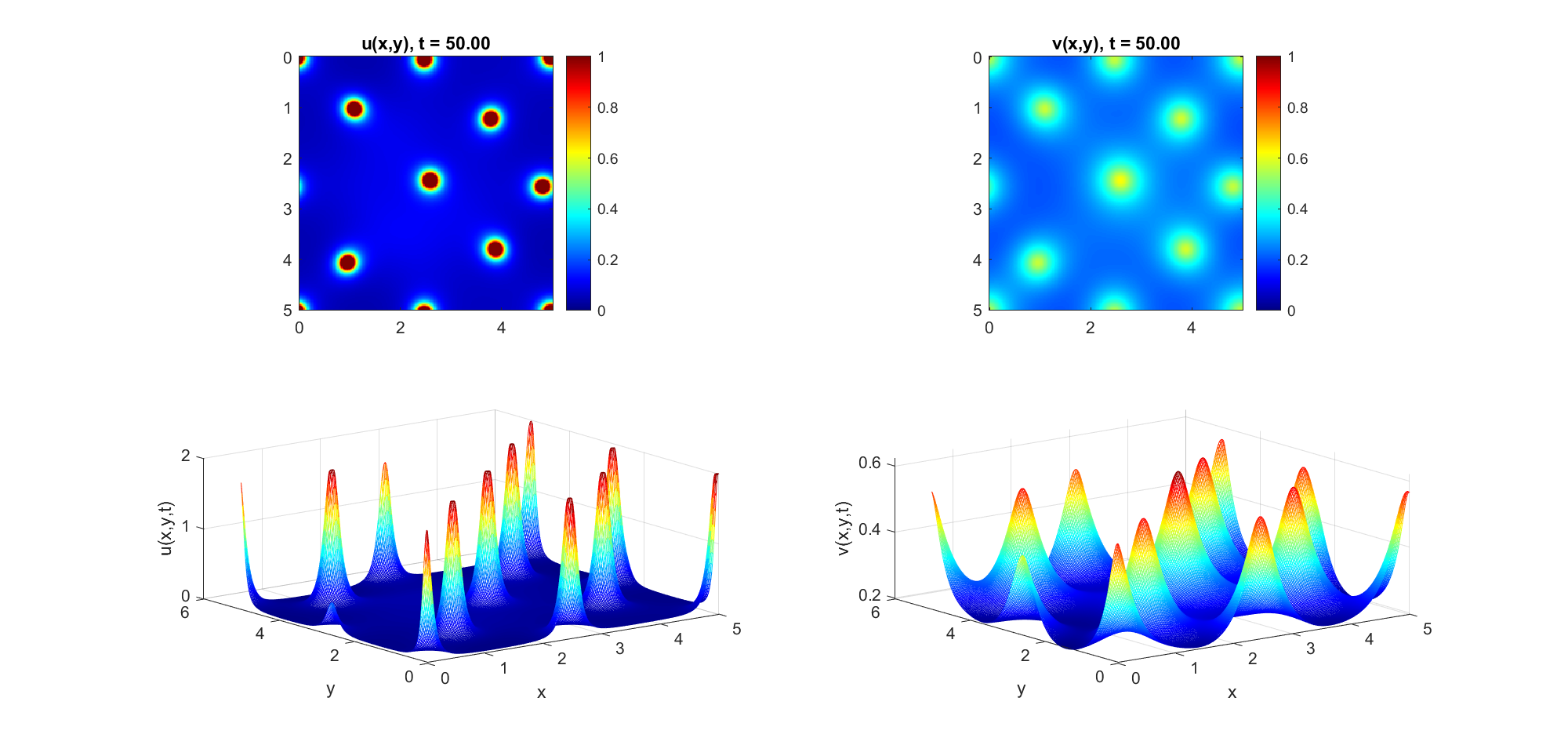}
	\caption{
		Time evolution of the 2D Keller–Segel chemotaxis system with logistic growth and signal degradation. Shown are snapshots of the bacterial density $u(x,y,t)$ and chemical signal $v(x,y,t)$ at $t = 0.20, 50$, visualized using 2D colormaps and 3D surface plots. The system exhibits stable aggregation patterns, where chemotactic movement toward higher chemical concentrations competes with diffusion and logistic regulation. Parameters: $\chi = 1.5$, $D = D_v = 0.1$, $r = 0.5$, $K = 1$, $\alpha = 1$, $\beta = 0.5$.
	}
	\label{Fig:keller_segel_2D}
\end{figure}

\begin{figure}[h!]
	\centering
	\includegraphics[width=0.85\textwidth]{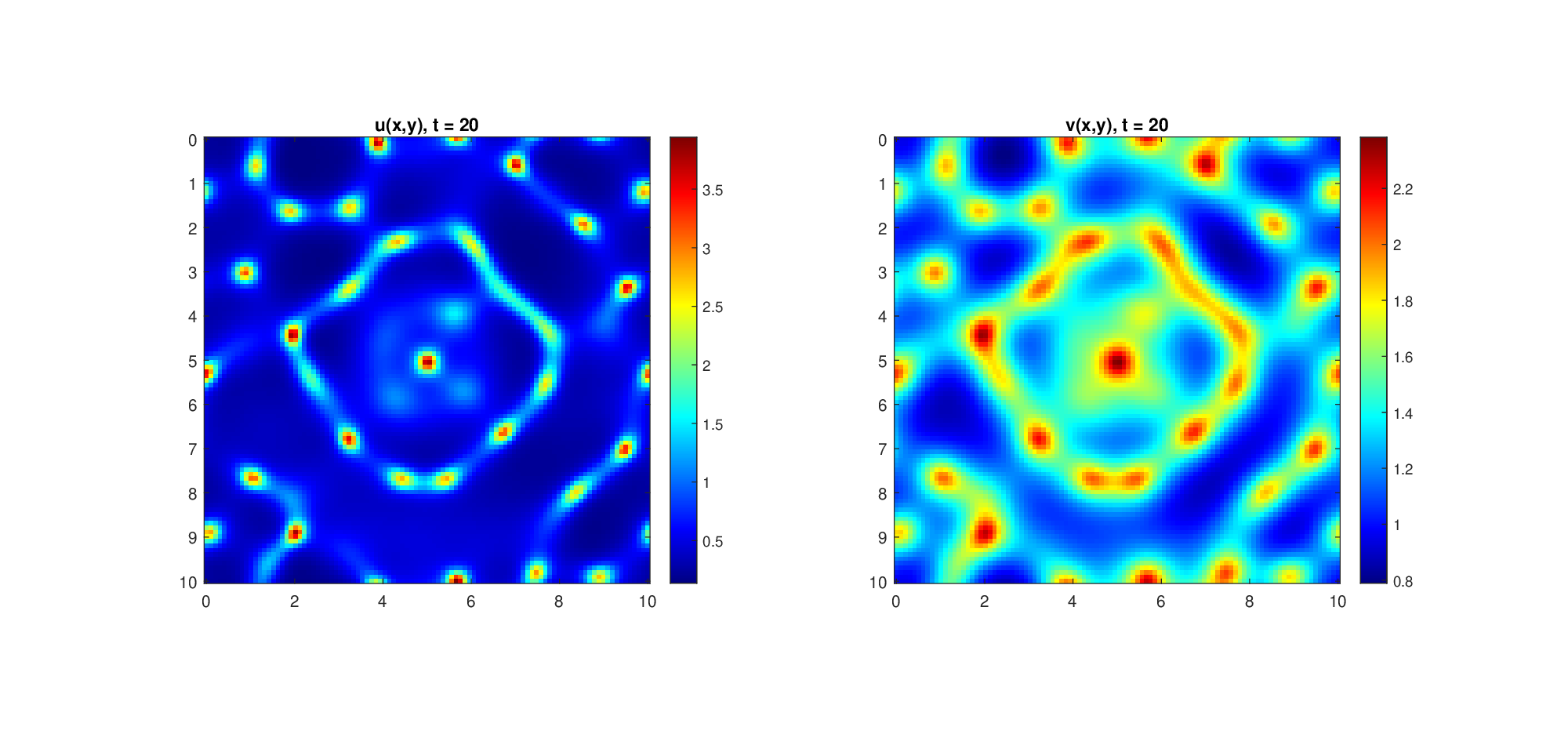}
	\includegraphics[width=0.85\textwidth]{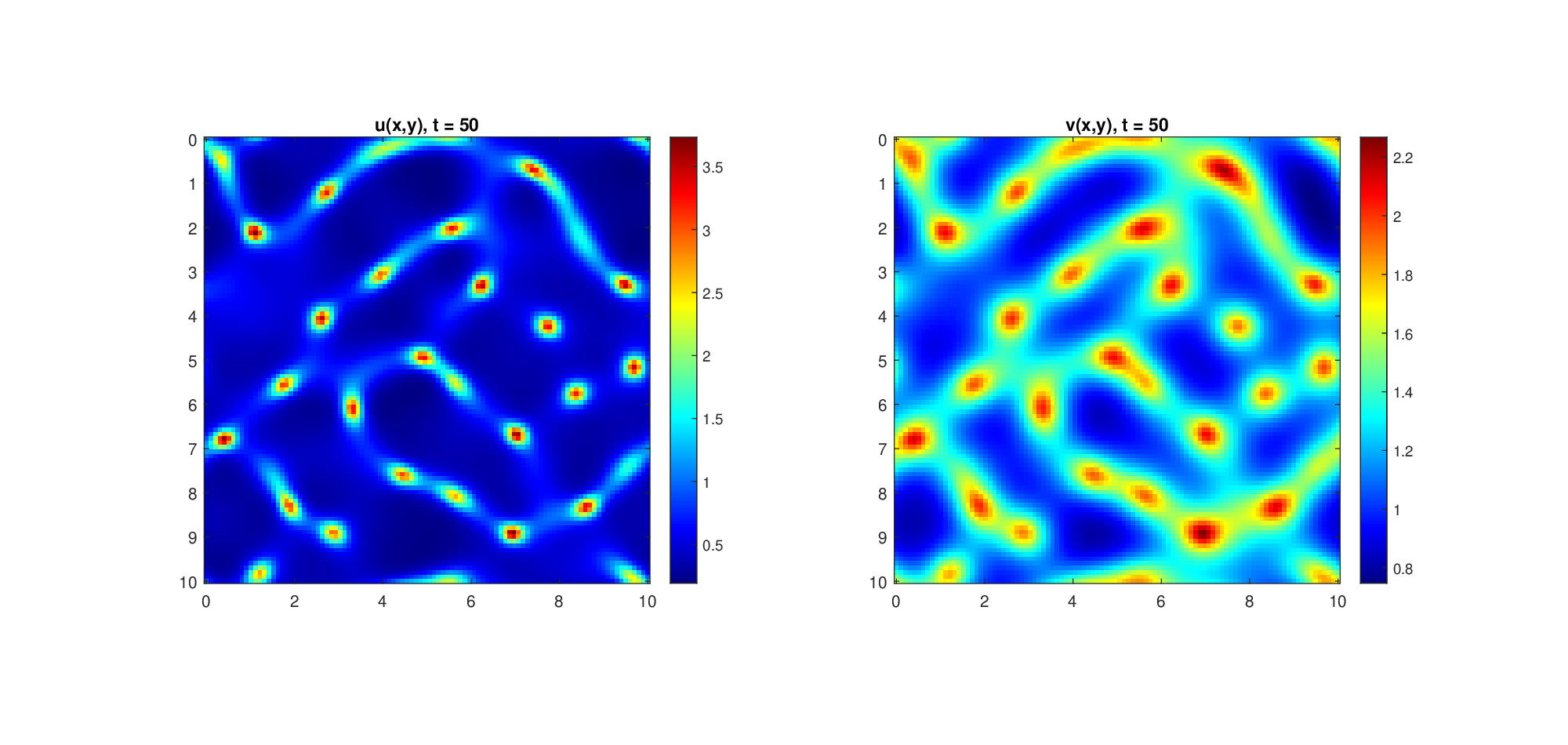}
	\includegraphics[width=0.85\textwidth]{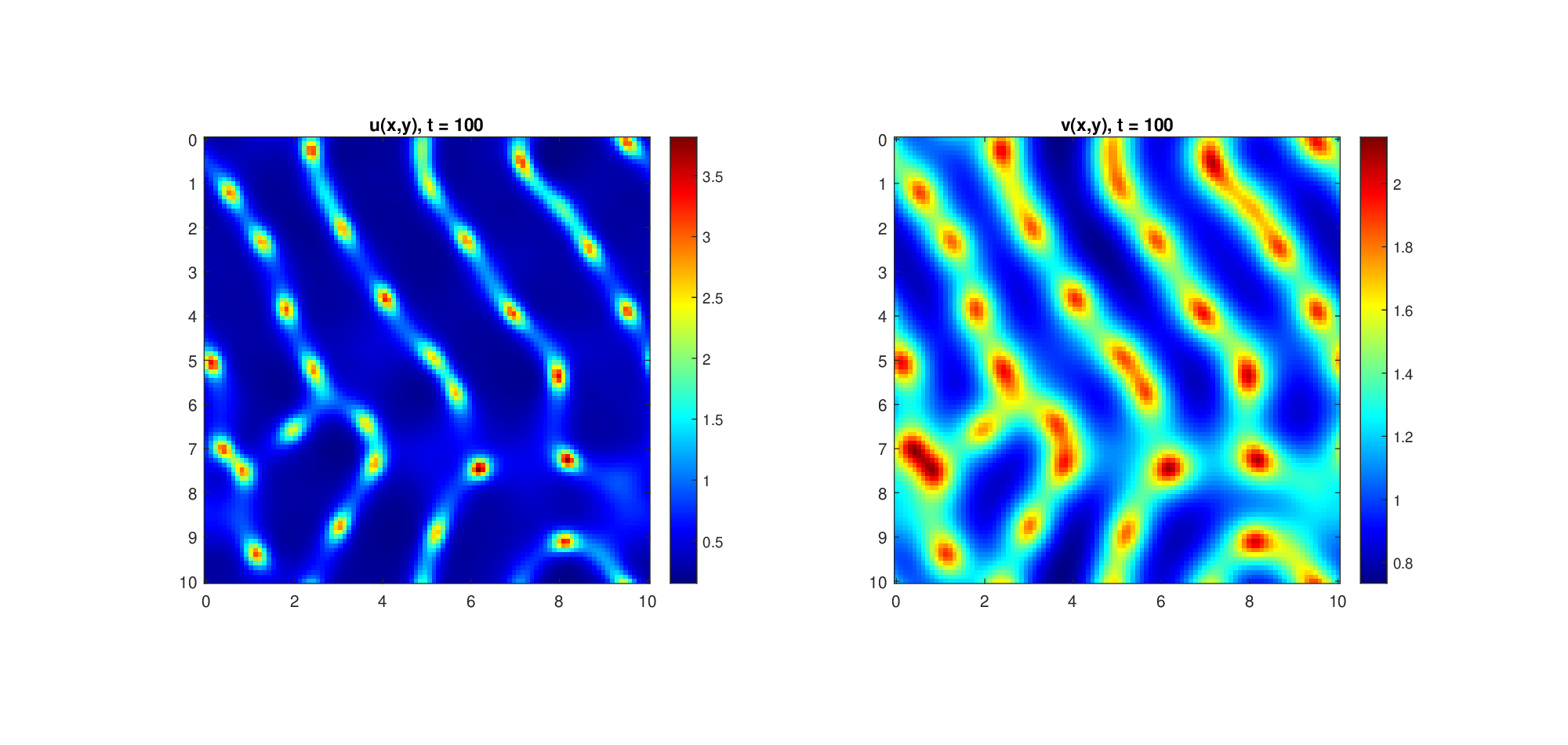}
	\caption{Different spatiotemporal evolution of the Keller-Segel model (\ref{eq:KS2}) for instances of final simulation time. }\label{Fig:keller_segel_2D1}
\end{figure}

As seen in Figure \ref{Fig:keller_segel_2D1}, the Keller--Segel--reaction model exhibits rich spatiotemporal dynamics, including the emergence of self-organized structures such as rings and stripes. These patterns arise from the interplay between chemotactic aggregation, nonlinear reaction kinetics, and diffusion.

In simulations with moderate chemotactic sensitivity and localized initial conditions, the bacterial density often evolves into ring-like formations. These rings represent a transient outward migration of cells followed by aggregation at the periphery, typically stabilized by a balance between chemotaxis and diffusion. Such phenomena are relevant to experimentally observed bacterial colony behaviors, where nutrients or signaling molecules accumulate at the boundary.

Alternatively, with stronger nonlinear coupling and parameter regimes promoting spatial instabilities, the system exhibits stripe-like patterns. These are typically aligned along the direction of anisotropic gradient development or symmetry breaking due to initial perturbations. The stripes may persist or eventually coalesce into localized aggregates depending on the system's parameters and boundary conditions.

To gain deeper insights into the spatial organization and intensity distribution of the evolving patterns, 2D surface plots were generated from the simulation results of the Keller--Segel--reaction system. These surface plots represent the population density $u(x, y, t)$ (or chemical concentration $v(x, y, t)$) as a height field over the spatial domain.

In Figure \ref{fig:ks_surface}, the ring-like structures observed in the density field appear as annular ridges, with sharp peaks indicating regions of high cellular aggregation. The surface topology clearly illustrates the void at the center and the encircling aggregation front, which is maintained due to the competition between outward diffusion and inward chemotactic pull.

In contrast, stripe-like configurations emerge as alternating ridges and valleys along preferred spatial directions. These patterns often arise from symmetry-breaking instabilities and suggest periodic alignment in cellular aggregation. The surface representation reveals the amplitude and periodicity of such stripes, offering quantitative detail not readily apparent from contour plots alone.

Surface plots are particularly valuable in detecting secondary instabilities, identifying localized peaks, and distinguishing between homogeneous diffusion and nonlinear aggregation dynamics.

\begin{figure}[h!]
	\centering
	\includegraphics[width=0.80\textwidth]{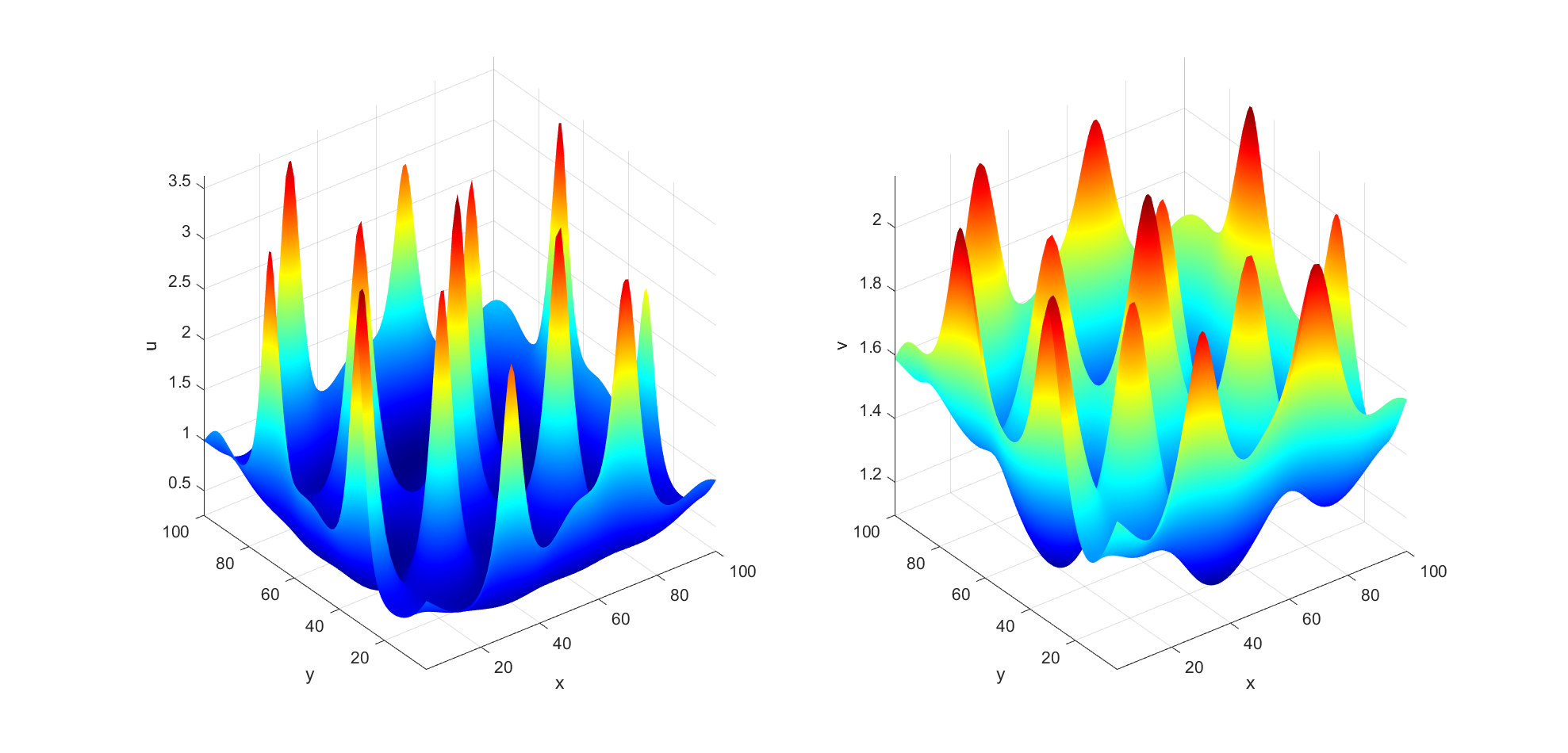}
	\includegraphics[width=0.80\textwidth]{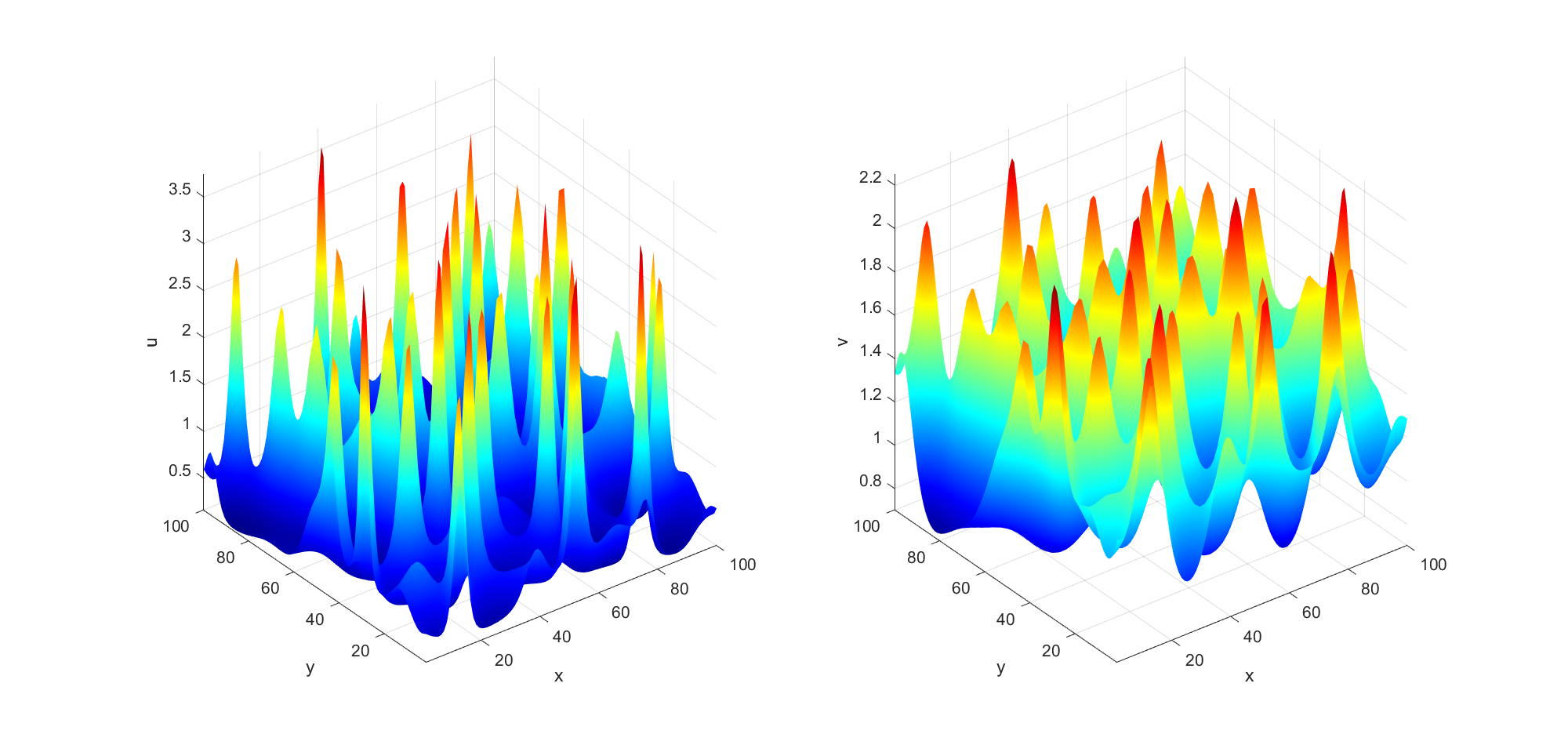}
	\includegraphics[width=0.80\textwidth]{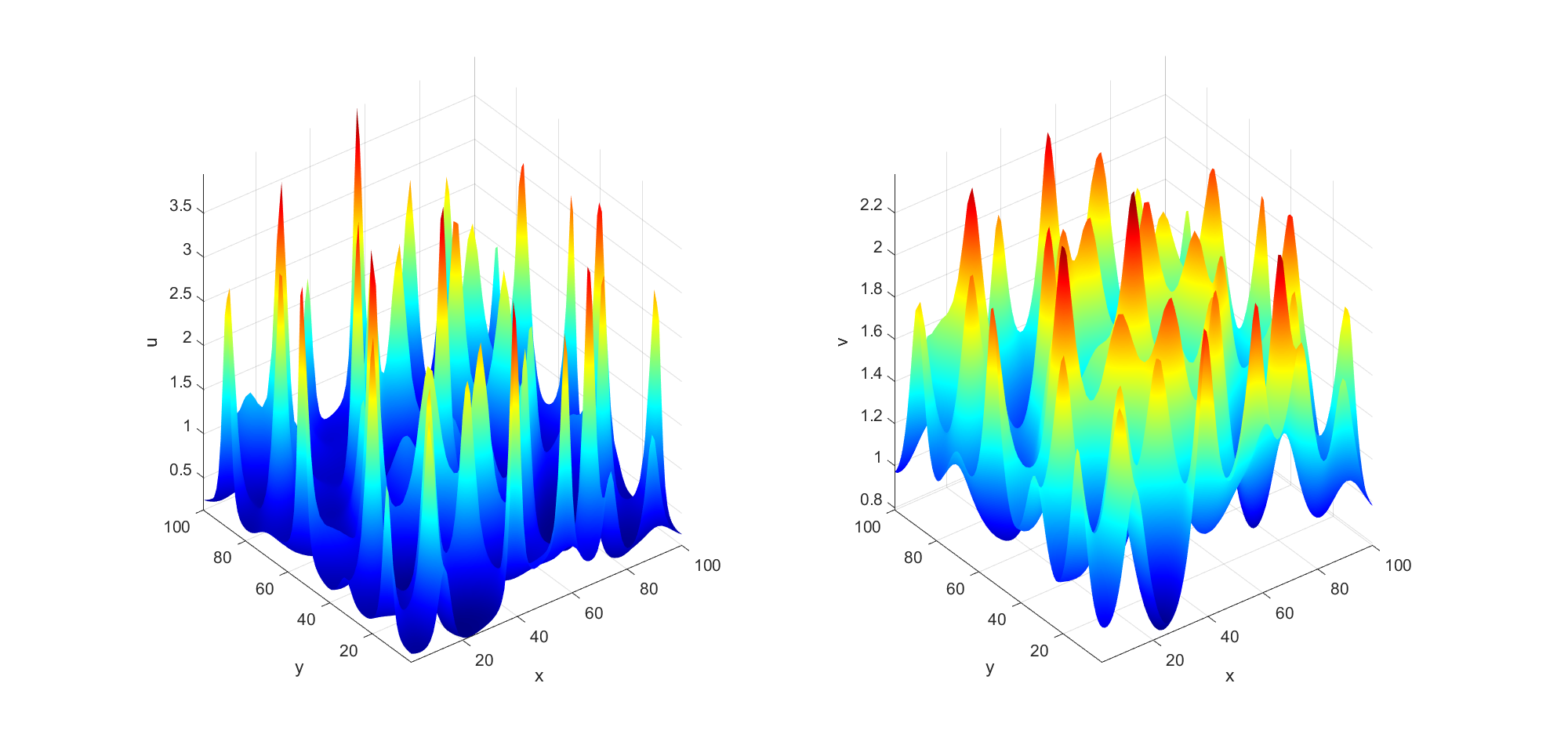}
	\caption{2D surface plots of Keller--Segel pattern formation. (Top) Ring-like structure in the population densities $u(x,y,t)$ and $v(x,y,t)$ visualized as a surface elevation at $t = 50$, showing strong peripheral aggregation. (Middle) Stripe-like periodic modulation at $t = 100$, with alternating high and low-density ridges. While the lower plots correspond to final simulation time $t=150$. Surface plots provide a 3D visualization of spatiotemporal pattern intensity.}
	\label{fig:ks_surface}
\end{figure}

The simulation results reveal the emergence of stable, non-uniform spatial patterns due to chemotactic aggregation counteracted by diffusion and logistic saturation. Over time, the bacterial density $u(x, y, t)$ forms spot-like or ring-like patterns concentrated near the center, while the chemical signal $v(x, y, t)$ remains smoother due to diffusion and degradation. These patterns are qualitatively consistent with the expected behavior of Keller–Segel-type systems in the parameter regime where chemotactic aggregation is balanced by stabilizing terms.

The use of periodic operators and bounded updates (clipping values between 0 and 2) prevents numerical blow-up. Visualizations at regular intervals highlight the temporal evolution and eventual stabilization of the patterns.

%%%%%%%%%%%%%%%%%%%%%%%%%%%%%%%%%%%%%%%%%%%%%%%%%%%%%%%%%%

\subsubsection*{Numerical Observations}
Numerical simulations in 1D and 2D confirm the emergence of oscillatory and chaotic spatiotemporal dynamics. The presence of two species with opposing chemotactic sensitivities (e.g., \( \chi_1 > 0, \chi_2 < 0 \)) is a crucial driver for this complexity, as it introduces competition and reinforcement mechanisms that destabilize the chemical signal \( v \).

\begin{Remark}
	For a complete bifurcation analysis, one could compute the dispersion relation \( \lambda(k) \) numerically and identify critical points \( k_c \) where \( \text{Re}(\lambda(k)) = 0 \), thereby characterizing the onset of instability. Further, numerical continuation tools (e.g., AUTO, MatCont) may be employed for full bifurcation diagrams.
\end{Remark}
%%%%%%%%%%%%%%%%%%%%%%%%%%%%%%%%%%%%%%%%%%%%%%%%%%%%%%%

\subsection*{1D Numerical Simulation and Observed Dynamics}

To complement the higher-dimensional studies, we implemented a one-dimensional (1D) finite difference simulation of the two-species, one-chemical Keller--Segel chemotaxis model. The governing equations describe the evolution of two interacting species \( u_1(x,t) \) and \( u_2(x,t) \), which respond chemotactically to a common chemical signal \( v(x,t) \), modeled as:
\begin{equation}
\begin{split}
	\partial_t u_1 &= D_1 \partial_{xx} u_1 - \chi_1 \partial_x(u_1 \partial_x v), \\
	\partial_t u_2 &= D_2 \partial_{xx} u_2 - \chi_2 \partial_x(u_2 \partial_x v), \\
	\partial_t v   &= D_v \partial_{xx} v + \alpha_1 u_1 + \alpha_2 u_2 - \beta v.
\end{split}
\end{equation}

We imposed homogeneous Neumann boundary conditions and initialized all species with small random perturbations around constant states. The parameter choice \( \chi_1 > 0 \), \( \chi_2 < 0 \) corresponds to a setting where the first species is attracted to the signal, while the second species is repelled. This antagonistic chemotactic interaction leads to complex spatial dynamics.

\paragraph{Emergence of Spatiotemporal Chaos.} The numerical results reveal that both \( u_1 \) and \( u_2 \) undergo persistent, irregular oscillations in space and time—characteristic of spatiotemporal chaos. These dynamics closely resemble the 2D case, despite the reduced spatial dimension. The observed chaotic behavior arises from the interplay between chemotactic attraction/repulsion and reaction–diffusion dynamics, which amplifies initial perturbations and leads to non-periodic, yet bounded, fluctuations in the population densities.

\paragraph{Biological Interpretation.} 
The 1D chaotic oscillations indicate that even in a minimal spatial setting, multi-species chemotaxis models can exhibit rich, nonlinear pattern dynamics. This underscores the importance of chemotactic interactions—particularly mixed attractive–repulsive responses—in driving complex population-level behaviors such as segregation, pulse formation, or oscillatory migration.

\subsection{2D Numerical Results and Discussion}

We numerically investigated the dynamics of a two-species, one-chemical Keller--Segel chemotaxis system using both the finite difference method (FDM) and the split-step Fourier method (SSFM). The equations governing the system are:
\begin{equation}
	\begin{aligned}
		\partial_t u_1 &= D_1 \Delta u_1 - \chi_1 \nabla \cdot (u_1 \nabla v), \\
		\partial_t u_2 &= D_2 \Delta u_2 - \chi_2 \nabla \cdot (u_2 \nabla v), \\
		\partial_t v &= D_v \Delta v + \alpha_1 u_1 + \alpha_2 u_2 - \beta v,
	\end{aligned}
\end{equation}
where \( u_1(x,y,t) \) and \( u_2(x,y,t) \) denote the densities of two interacting species, and \( v(x,y,t) \) is the concentration of a common chemoattractant. The species diffuse at rates \( D_1 \) and \( D_2 \), and their movement is modulated by chemotactic sensitivities \( \chi_1 \) and \( \chi_2 \), respectively. The signal \( v \) is produced by both species and decays at rate \( \beta \).

We performed numerical simulations of a two-dimensional chemotaxis–reaction model consisting of two interacting species $u_1(x,y,t)$ and $u_2(x,y,t)$ that respond chemotactically to a diffusible chemical signal $v(x,y,t)$. The simulation was conducted on a square domain $\Omega = [0,2] \times [0,2]$ with spatial resolution $\Delta x = \Delta y = 0.04$, corresponding to $100 \times 100$ grid points. The total simulation time was $T = 4.0$ with a time step size $\Delta t = 10^{-3}$, resulting in 4000 time steps. Neumann (zero-flux) boundary conditions were enforced for all three fields.

The diffusion coefficients were chosen as $D_1 = D_2 = 10^{-3}$ for the species and $D_v = 10^{-2}$ for the chemoattractant, reflecting higher mobility of the signal compared to the organisms. The chemotactic sensitivities were $\chi_1 = 0.05$ and $\chi_2 = -0.05$, indicating that species $u_1$ is attracted to the chemical signal while $u_2$ is repelled. The production parameters were $\alpha_1 = 1$ and $\alpha_2 = 0.001$, suggesting that $u_1$ is the dominant source of $v$, while $u_2$ contributes marginally. The decay rate of the chemical signal was set to $\beta = 0.001$.

Initial conditions for $u_1$ and $u_2$ consisted of small random perturbations around the uniform state $0.5$, while $v$ was initialized as zero throughout the domain. The Laplacian operator was discretized using a five-point stencil, and chemotactic fluxes were computed using central differences for the gradients and divergences.

As the system evolved, spatially heterogeneous patterns emerged. Despite their opposite chemotactic signs, both $u_1$ and $u_2$ exhibited remarkably similar aggregation dynamics, forming clusters that closely mirrored each other. This synchronization is likely due to the dominant production of the chemoattractant by $u_1$, which drives both species via direct attraction and indirect coupling through $v$. The chemoattractant field $v(x,y,t)$ developed smoother spatial gradients but clearly reflected the underlying population structure.

\begin{figure}[h!]
	\centering
	\includegraphics[width=\textwidth]{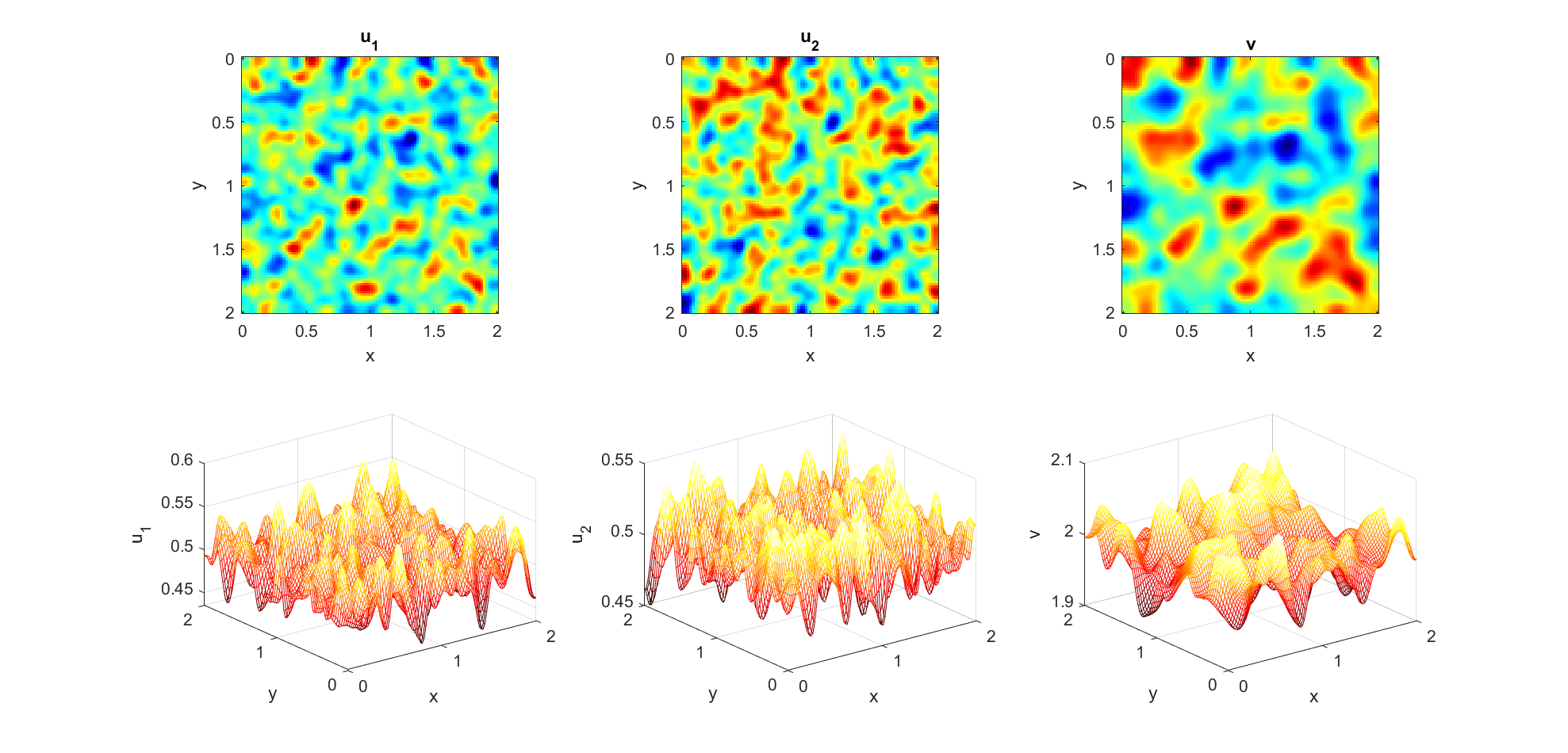}
	\caption{Time evolution of the two species $u_1$, $u_2$, and the chemoattractant $v$ in two dimensions. Each row shows density plots and 3D mesh plots at selected time points. Despite having opposite chemotactic sensitivities, $u_1$ and $u_2$ evolve with strikingly similar spatial patterns, influenced primarily by the chemoattractant dynamics.}
	\label{fig:chemotaxis_patterns}
\end{figure}

\subsubsection{Qualitative Dynamics and Spatiotemporal Patterns}

Numerical simulations reveal that the coupled nonlinear PDE system exhibits rich spatiotemporal dynamics. Both numerical schemes, FDM and SSFM, consistently demonstrate the emergence of complex oscillatory structures in space and time. The behavior includes:

\textbf{Initial pattern formation:} Starting from slightly perturbed homogeneous initial data, the system evolves into nontrivial spatial structures. These patterns are initiated by chemotactic aggregation as the organisms move toward regions of higher chemical concentration.
	
\textbf{Spatiotemporal oscillations:} Over time, the patterns do not settle into a static configuration. Instead, they exhibit persistent fluctuations in amplitude and position, indicative of \emph{spatiotemporal chaos}. These oscillations arise from the nonlinear feedback between the two species and the chemoattractant field.
	
 \textbf{Species segregation and interaction:} Depending on the signs and magnitudes of \( \chi_1 \) and \( \chi_2 \), species either co-aggregate or spatially segregate. In the case where one species is chemorepellent (e.g., \( \chi_2 < 0 \)), ring-like or halo patterns appear, with one species avoiding regions of high \( v \), while the other concentrates there.

\subsubsection{Comparison of Numerical Methods}
Both numerical methods accurately reproduce the qualitative features of the model. 
 The \textbf{finite difference method (FDM)} is simple to implement and supports zero-flux (Neumann) boundary conditions, making it suitable for modeling closed systems. The results clearly exhibit spatially structured oscillations in all three fields. The \textbf{split-step Fourier method (SSFM)} offers higher spectral accuracy and is particularly well-suited for periodic domains. It also reproduces chaotic dynamics but with sharper resolution and less numerical diffusion, highlighting the fine-scale interactions between species.

These numerical experiments suggest that chemotactic interactions in multi-species systems can lead to complex, potentially chaotic behavior even in minimal models. Key implications include:

 \textbf{Ecological segregation:} The model can explain spatial niche formation, where competing species avoid each other spatially through chemorepulsion.
\textbf{Oscillatory population dynamics:} The sustained spatiotemporal oscillations may be indicative of predator-prey-like or competitive oscillations observed in microbial colonies.
\textbf{Sensitivity to initial conditions and parameters:} Small changes in diffusion coefficients or chemotactic sensitivities drastically alter the emergent patterns, hinting at a sensitive dependence characteristic of chaotic systems.

Our simulations demonstrate that multi-species chemotaxis systems are capable of producing a wide range of spatiotemporal patterns, including chaotic oscillations, aggregation, and segregation. These findings underscore the richness of chemotaxis-driven ecological dynamics and highlight the need for further analytical and numerical study in higher dimensions and more biologically detailed models.

\begin{figure}[htbp]
	\centering
	\includegraphics[width=\textwidth]{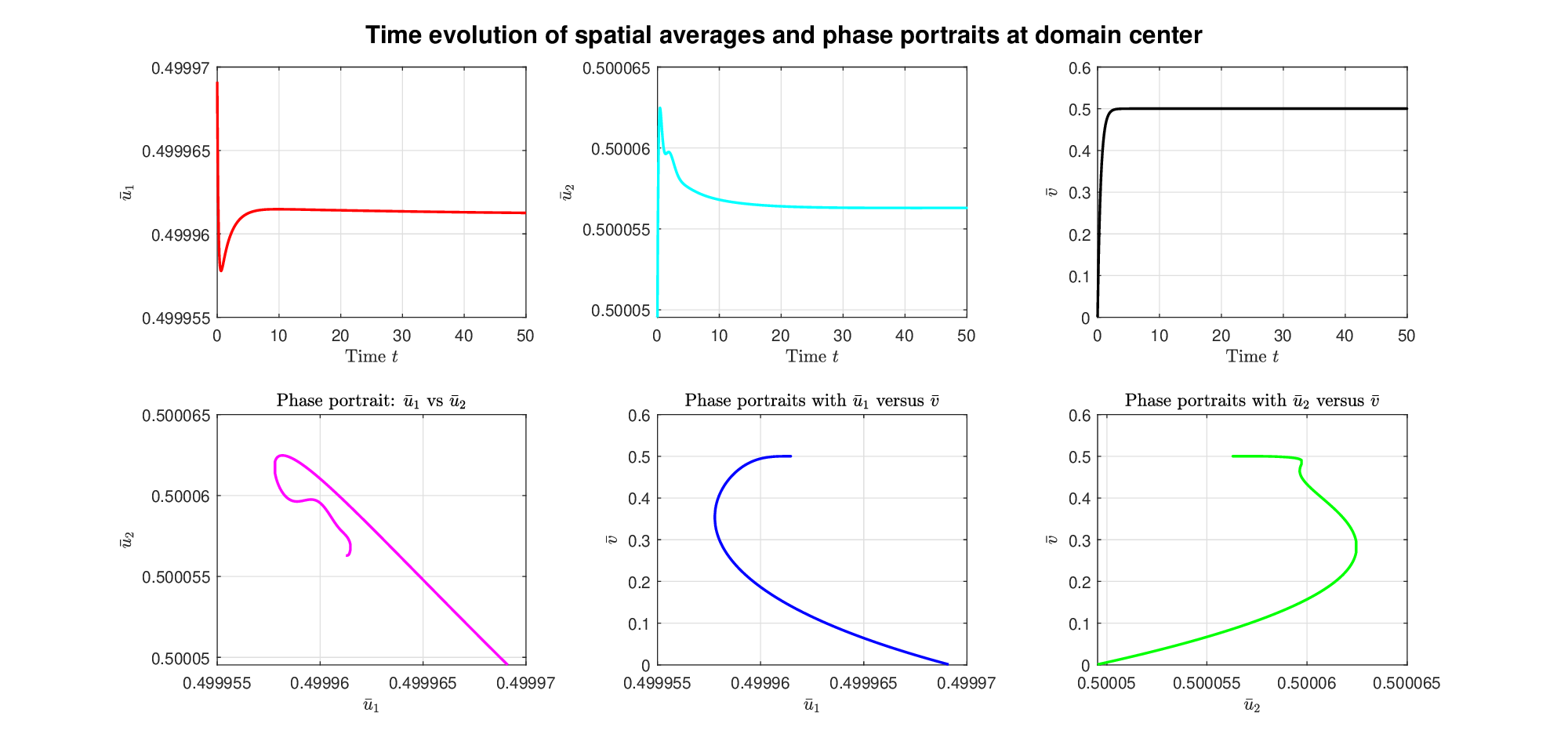}
	\includegraphics[width=\textwidth]{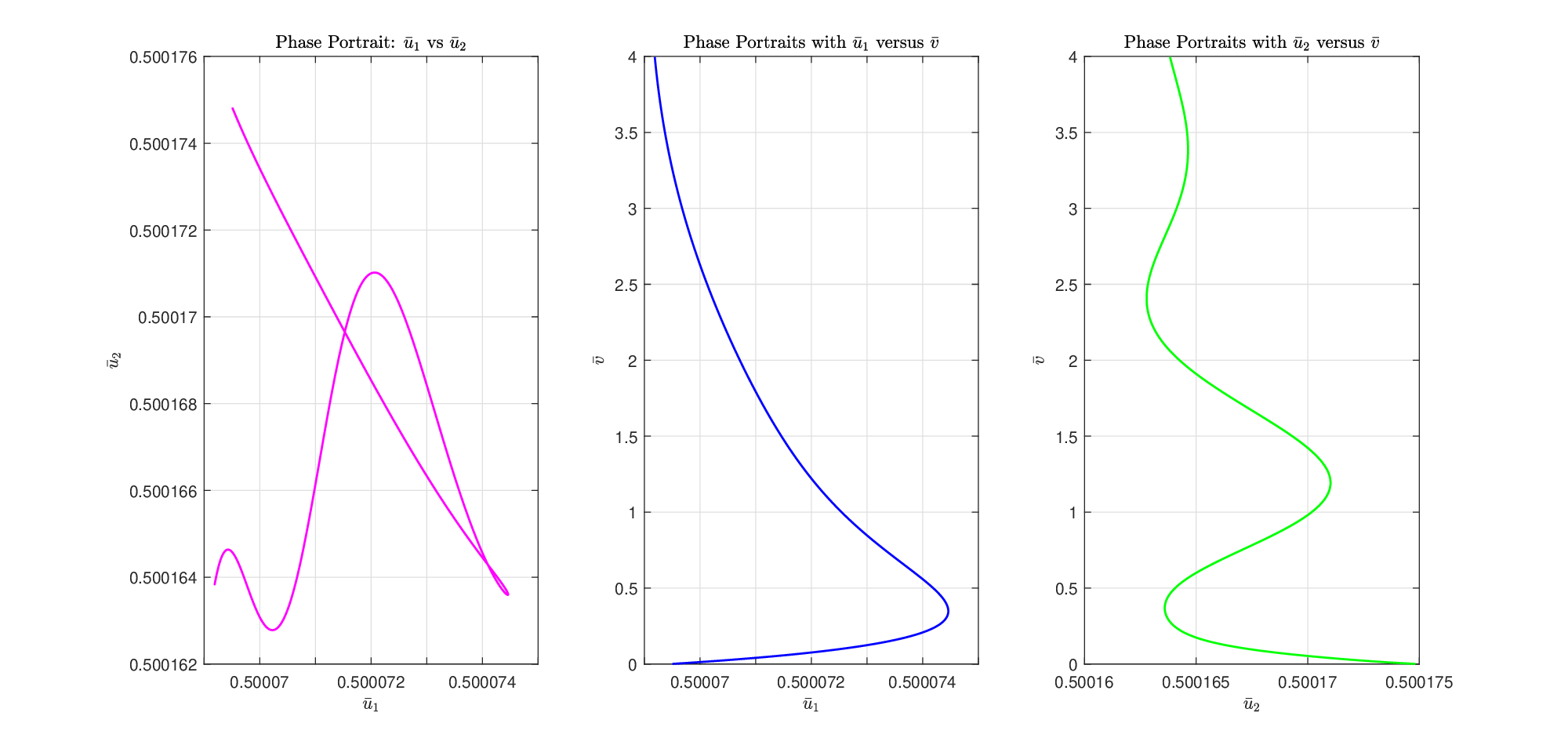}
	\caption{Time evolution and phase portrait of species densities $u_1$, $u_2$, and chemical $v$ at the domain center $(x_0, y_0)$. The plot reveals irregular oscillations and interactions characteristic of spatiotemporal chaos.}
	\label{fig:timeseries_center}
\end{figure}

\subsection{Temporal Oscillations and Chaotic Signatures}

To further understand the dynamics, we tracked the time evolution of $u_1$, $u_2$, and $v$ at a fixed point in space (e.g., the domain center). Figure~\ref{fig:timeseries_center} shows that all three variables exhibit irregular, bounded oscillations over time. The absence of periodicity and sensitivity to initial conditions are indicative of a spatiotemporally chaotic regime, often seen in multi-species chemotaxis systems with strong nonlinear coupling. These results complement the spatial patterns observed and suggest complex ecological interactions in the modeled system.

\subsection*{Comparison of 1D and 2D Instabilities}
Numerical simulations in 1D reveal chaotic oscillations in both species \( u_1 \) and \( u_2 \), which align qualitatively with those observed in 2D. However, 2D simulations show additional spatial complexity such as rotating spirals, stripes, or spot–annulus formations.

The key differences include:
\begin{itemize}
	\item In 1D, chaos is primarily temporal with mild spatial heterogeneity.
	\item In 2D, interaction of diffusion and chemotaxis along two axes promotes richer bifurcations and pattern multiplicity.
	\item The spectrum of unstable modes is broader in 2D, allowing for more wave interactions and thus more diverse spatiotemporal behavior.
\end{itemize}
These findings suggest that dimensionality plays a critical role in the pattern-forming capacity of multi-species chemotaxis systems.

%%%%%%%%%%%%%%%%%%%%%%%%%%%%%%%%%%%%%%%%%%%%%%%

\subsection{1D Chemotaxis--Fluid Coupling: Numerical Results}

To complement the two-dimensional simulations, we analyze the dynamics of the chemotaxis--fluid coupling system in a one-dimensional spatial domain \( x \in [0, L] \). The governing equations reduce to:

\begin{equation}\label{eq:main_system}
	\begin{split}
		\partial_t u + w \partial_x u &= D\, \partial_{xx} u - \chi\, \partial_x \left( u\, \partial_x v \right) + r\,u\left(1 - \frac{u}{K}\right), \\
		\partial_t v + w \partial_x v &= D_v\, \partial_{xx} v + \alpha u - \beta v, \\
		\partial_t w + w \partial_x w &= -\partial_x p + \nu\, \partial_{xx} w + \kappa u, \qquad \partial_x w = 0,
	\end{split}
\end{equation}
with periodic or no-flux boundary conditions. The scalar fluid velocity \( w(x,t) \) replaces the 2D vector field \( \mathbf{w} \), and the incompressibility condition simplifies to \( \partial_x w = 0 \), implying a constant or spatially averaged flow in many implementations.

In what follows, we analyze the system (\ref{eq:main_system}) 
posed on a bounded domain $\Omega = (0,L)$ with no-flux boundary conditions:
\[
\partial_x u = \partial_x v = \partial_x w = 0 \quad \text{on } \partial\Omega.
\]

We assume the initial data:
\[
u(x,0) = u_0(x) \geq 0, \quad v(x,0) = v_0(x) \geq 0, \quad w(x,0) = w_0(x),
\]
are smooth and compatible with the boundary conditions.

\begin{Theorem}[Global Existence of Weak Solutions]
	\label{thm:global_weak}
	Let the initial data $(u_0, v_0, w_0) \in H^1(\Omega)^3$, and assume that all parameters $D, D_v, \nu, \chi, r, K, \alpha, \beta, \kappa$ are positive. Then there exists a global weak solution $(u,v,w)$ to the system
	\begin{equation}
		\label{eq:main_system1}
		\begin{split}
			\partial_t u + w \partial_x u &= D\, \partial_{xx} u - \chi\, \partial_x \left( u\, \partial_x v \right) + r\,u\left(1 - \frac{u}{K}\right), \\
			\partial_t v + w \partial_x v &= D_v\, \partial_{xx} v + \alpha u - \beta v, \\
			\partial_t w + w \partial_x w &= -\partial_x p + \nu\, \partial_{xx} w + \kappa u, \qquad \partial_x w = 0,
		\end{split}
	\end{equation}
	on a bounded domain $\Omega \subset \mathbb{R}$ with smooth boundary and homogeneous Neumann boundary conditions. The solution satisfies
	\[
	u, v, w \in L^\infty(0,T;H^1(\Omega)) \cap L^2(0,T;H^2(\Omega)), \quad \text{for all } T>0,
	\]
	and $u(x,t) \geq 0$ almost everywhere in $\Omega \times (0,\infty)$.
\end{Theorem}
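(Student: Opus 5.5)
The incompressibility condition $\partial_x w=0$ with Neumann data forces $w(\cdot,t)$ to be spatially constant. The first step is therefore to interpret the $w$-equation correctly. Since $w=w(t)$, we have $w\partial_x w=0$ and $\partial_{xx}w=0$, so the equation reads $w'(t)=-\partial_x p+\kappa u$. This determines the pressure up to a constant. Averaging over $\Omega=(0,L)$ and using $\int_\Omega \partial_x p\,dx=0$ (after fixing the pressure gauge, e.g.\ periodic/Neumann compatibility) gives the ODE
\[
w'(t)=\frac{\kappa}{L}\int_\Omega u\,dx, \qquad w(0)=\frac{1}{L}\int_\Omega w_0\,dx.
\]
Once $\int_\Omega u$ is controlled, $w\in C^1([0,T])$, and trivially $w\in L^\infty(0,T;H^1)\cap L^2(0,T;H^2)$. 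The transport terms $w\partial_x u$ and $w\partial_x v$ are then just drifts with bounded, spatially constant speed. They vanish in every integral identity $\int_\Omega w\,\partial_x(\cdot)\,(\cdot)$ of the form $\int u^p\partial_x u$, and are harmless elsewhere.

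Next I would construct approximate solutions by a Galerkin scheme in the Neumann cosine basis of $(0,L)$. Alternatively, I would take the local classical solution of Theorem~\ref{thm:local-exist} (its proof extends verbatim to the added constant drift) and show it is global by the continuation criterion there. A priori estimates, in order:

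\begin{enumerate}
\item \textbf{Nonnegativity.} Test with $u^-$ (or use the maximum principle on the classical solution), giving $u\ge0$. Then $v\ge0$, since $\alpha u\ge0$.
\item \textbf{Mass bound.} Integrating the $u$-equation and using Cauchy--Schwarz, $\tfrac{d}{dt}\int u\le r\int u-\tfrac{r}{KL}(\int u)^2$. Hence $\int_\Omega u\le \max\{\int u_0, KL\}$ uniformly in time, and $w$ is bounded on $[0,T]$.
\item \textbf{Estimates for $v$.} Testing the $v$-equation with $v$ and with $-\partial_{xx}v$ gives $v\in L^\infty(0,T;H^1)\cap L^2(0,T;H^2)$ in terms of $\|u\|_{L^2(0,T;L^2)}$.
\item \textbf{Estimates for $u$.} Testing the $u$-equation with $u$ gives
\[
\tfrac12\tfrac{d}{dt}\|u\|_2^2+D\|u_x\|_2^2=\chi\int u\,v_x\,u_x+r\|u\|_2^2-\tfrac{r}{K}\|u\|_3^3.
\]
\end{enumerate}

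The chemotactic term in step 4 is the main obstacle, and I would treat it with one-dimensional tools. By Young, $\chi\int u v_x u_x\le \tfrac D2\|u_x\|_2^2+C\int u^2 v_x^2$. In 1D, $\|v_x\|_\infty\le C\|v\|_{H^2}$, so this is at most $C\|v\|_{H^2}^2\|u\|_2^2$. The coefficient $\|v\|_{H^2}^2$ is time-integrable by step 3, but step 3 itself depends on $u$, so the system has to be closed jointly. To do this I would use a combined functional $\|u\|_2^2+A\|v_x\|_2^2$, together with Gagliardo--Nirenberg $\|u\|_{\infty}^2\le C\|u\|_1\|u\|_{H^1}$ and the uniform $L^1$ bound. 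Alternatively, the logistic term $-\tfrac rK\|u\|_3^3$ absorbs $\int u^2 v_x^2\le \|u\|_3^2\|v_x\|_6^2$ after interpolating $\|v_x\|_6$ between $\|v_x\|_2$ and $\|v_{xx}\|_2$. Either route ends in a Gronwall inequality with locally integrable coefficients, yielding $u\in L^\infty(0,T;L^2)\cap L^2(0,T;H^1)$ for every $T$.

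Testing afterwards with $-u_{xx}$ and again using $\|v_x\|_\infty,\|u\|_\infty\lesssim$ $H^1$-norms upgrades this to $u\in L^\infty(0,T;H^1)\cap L^2(0,T;H^2)$. Finally, the bounds are uniform in the Galerkin parameter. Aubin--Lions gives strong $L^2(0,T;H^1)$ compactness, which suffices to pass to the limit in $u v_x$, $u^2$ and $w u_x$, while nonnegativity is preserved under a.e.\ convergence. Since $T$ was arbitrary, the weak solution is global.
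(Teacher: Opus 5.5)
The step that fails is your claim that the drift terms vanish in the integral identities. With $w=w(t)$ spatially constant and \emph{homogeneous Neumann} conditions,
\[
\int_\Omega w\,u^p\,\partial_x u\,dx=\frac{w(t)}{p+1}\Bigl(u(L,t)^{p+1}-u(0,t)^{p+1}\Bigr).
\]
Neumann conditions prescribe $\partial_x u$, not $u$, at the endpoints, so this is not zero. It would vanish under periodic conditions. Under Neumann conditions a drift term vanishes only when it is paired with a second derivative of the same function, e.g.\ $\int_\Omega w\,\partial_x v\,\partial_{xx}v\,dx=\tfrac{w}{2}\bigl[(\partial_x v)^2\bigr]_0^L=0$. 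Hence Step 2 actually reads
\[
\frac{d}{dt}\int_\Omega u\,dx=w(t)\bigl(u(0,t)-u(L,t)\bigr)+r\int_\Omega u\,dx-\frac{r}{K}\int_\Omega u^2\,dx .
\]
The non-conservative drift creates and destroys mass at the boundary. This term needs $\|u\|_{L^\infty}$, i.e.\ gradient information, so no uniform $L^1$ bound follows.

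Everything downstream rests on that bound:
\begin{itemize}
\item the boundedness of $w$ through $w'=\tfrac{\kappa}{L}\int_\Omega u$;
\item the closing of Step 4, since the $L^1$ bound is what keeps your Gronwall argument linear. Without it, combining Steps 3 and 4 gives a superlinear integral inequality for $\|u\|_2^2$.
\end{itemize}
Worse, with your choice of pressure the $u$-estimates deteriorate as $|w|$ grows, e.g.\ $\tfrac{|w|}{2}\bigl|u(L)^2-u(0)^2\bigr|\le\varepsilon\|\partial_x u\|_2^2+C(|w|+|w|^2)\|u\|_2^2$. Meanwhile $w'$ is driven by $\int_\Omega u$. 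This feedback is superlinear, so your estimates do not even rule out finite-time blow-up of $w$. The paper's Galerkin sketch simply omits the transport term from its $L^2$ identity, so it offers no help here.

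\textbf{A possible repair.} The pressure drop $p(L,t)-p(0,t)$ is not a gauge. The constraint $\partial_x w=0$ leaves $w'(t)$ free, so for an existence statement you may take $\partial_x p=\kappa u$, i.e.\ $w\equiv\bar w_0$. Then use the weighted mass $\int_\Omega\phi u$ with $\phi=e^{-\bar w_0x/D}$. Since $D\partial_x\phi+\bar w_0\phi=0$, the drift--diffusion part contributes nothing, boundary terms included, and
\[
\tfrac{d}{dt}\int_\Omega\phi u=-\tfrac{\chi\bar w_0}{D}\int_\Omega\phi u\,\partial_x v+\int_\Omega\phi\, r u\bigl(1-\tfrac{u}{K}\bigr).
\]
Combine this with $\tfrac{d}{dt}\|\partial_x v\|_2^2+2\beta\|\partial_x v\|_2^2\le\tfrac{\alpha^2}{D_v}\|u\|_2^2$ (here the drift term does vanish) and the logistic term. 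A linear Gronwall argument for $\int_\Omega\phi u+A\|\partial_x v\|_2^2$, with $A$ small, then bounds $\|u\|_{L^1}$ and $\|\partial_x v\|_{L^2}$ on every $[0,T]$. After that, your $L^2$ and $H^1$ estimates go through, with the boundary terms absorbed as above.

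\textbf{A separate slip.} The Gagliardo--Nirenberg inequality you quote, $\|u\|_\infty^2\le C\|u\|_1\|u\|_{H^1}$, is false: test it on a spike of height $h$ and width $\delta\to0$. The correct version is $\|u\|_\infty\le C\|\partial_x u\|_2^{2/3}\|u\|_1^{1/3}+C\|u\|_1$. Together with $\|\partial_x v(t)\|_2^2\le C+C\int_0^t\|u\|_2^2$, it still closes into a linear Gronwall inequality for $\int_0^t\|\partial_x u\|_2^2$.
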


\begin{proof}
	We use a standard Galerkin approximation scheme combined with a priori estimates and compactness arguments.
	
	\paragraph{Step 1: Galerkin approximation.}
	Let $\{\phi_k\}_{k=1}^\infty$ be an orthonormal basis of $H^1(\Omega)$ consisting of eigenfunctions of the Neumann Laplacian. For each $n \in \mathbb{N}$, define approximate solutions:
	\[
	u_n(x,t) = \sum_{k=1}^n a_k^n(t)\phi_k(x), \quad v_n(x,t) = \sum_{k=1}^n b_k^n(t)\phi_k(x), \quad w_n(x,t) = \sum_{k=1}^n c_k^n(t)\phi_k(x),
	\]
	and insert into the weak form of the system. This yields a system of ODEs for the coefficients $\{a_k^n(t), b_k^n(t), c_k^n(t)\}$ with initial data projected onto the subspace.
	
	\paragraph{Step 2: A priori estimates.}
	Multiply the first equation by $u_n$, integrate by parts:
	\[
	\frac{1}{2}\frac{d}{dt}\|u_n\|_{L^2}^2 + D\|\partial_x u_n\|_{L^2}^2 = \chi \int u_n \partial_x u_n \partial_x v_n \, dx + r \int u_n^2 \left(1 - \frac{u_n}{K} \right) dx.
	\]
	
	Using integration by parts, the Cauchy–Schwarz and Young's inequalities:
	\[
	\chi \left| \int u_n \partial_x u_n \partial_x v_n \, dx \right| \leq \frac{D}{2} \|\partial_x u_n\|_{L^2}^2 + \frac{\chi^2}{2D} \|u_n \partial_x v_n\|_{L^2}^2.
	\]
	But since $\|u_n \partial_x v_n\|_{L^2} \leq \|u_n\|_{L^\infty} \|\partial_x v_n\|_{L^2}$, and $H^1(\Omega) \hookrightarrow L^\infty(\Omega)$ in 1D, we get:
	\[
	\chi \left| \int u_n \partial_x u_n \partial_x v_n \, dx \right| \leq C \|\partial_x u_n\|_{L^2}^2 + C \|u_n\|_{H^1}^2 \|\partial_x v_n\|_{L^2}^2.
	\]
	
	The logistic term satisfies:
	\[
	r \int u_n^2 \left(1 - \frac{u_n}{K} \right) dx \leq r \int u_n^2 dx.
	\]
	
	Hence, we obtain:
	\[
	\frac{d}{dt} \|u_n\|_{L^2}^2 + D \|\partial_x u_n\|_{L^2}^2 \leq C \|u_n\|_{H^1}^2 \|\partial_x v_n\|_{L^2}^2 + C \|u_n\|_{L^2}^2.
	\]
	
	Similar estimates for $v_n$ and $w_n$ yield:
	\[
	\frac{d}{dt} \|v_n\|_{L^2}^2 + D_v \|\partial_x v_n\|_{L^2}^2 \leq \alpha \int u_n v_n \, dx - \beta \|v_n\|_{L^2}^2 + C \|v_n\|_{L^2}^2.
	\]
	\[
	\frac{d}{dt} \|w_n\|_{L^2}^2 + \nu \|\partial_x w_n\|_{L^2}^2 \leq C \|w_n\|_{L^2}^2 + C \|u_n\|_{L^2}^2.
	\]
	
	Applying Grönwall's inequality and Sobolev embeddings, we obtain uniform in $n$ bounds for:
	\[
	\|u_n\|_{L^\infty(0,T;H^1)} + \|u_n\|_{L^2(0,T;H^2)} \leq C_T, \quad \text{and similarly for } v_n, w_n.
	\]
	
	\paragraph{Step 3: Compactness and limit passage.}
	Using the Aubin–Lions lemma, the sequences $\{u_n\}, \{v_n\}, \{w_n\}$ are relatively compact in $L^2(0,T;H^1(\Omega))$. Weak and strong convergence allows passage to the limit in the weak formulation, yielding a global weak solution $(u,v,w)$ satisfying the stated regularity.
	
	\paragraph{Step 4: Non-negativity of $u$.}
	Since $u_0 \geq 0$, and the nonlinear terms preserve non-negativity (especially the logistic source and the chemotactic flux is conservative), standard maximum principle arguments or approximation from below by positive solutions yield that $u(x,t) \geq 0$ for all $t > 0$.
	
\end{proof}

\begin{Theorem}[Finite-Time Blow-up without Logistic Damping]
	\label{thm:blowup}
	Consider system \eqref{eq:main_system1} with $r = 0$, i.e.,
	\begin{equation}
		\label{eq:reduced_system}
		\begin{split}
			\partial_t u + w \partial_x u &= D\, \partial_{xx} u - \chi\, \partial_x \left( u\, \partial_x v \right), \\
			\partial_t v + w \partial_x v &= D_v\, \partial_{xx} v + \alpha u - \beta v, \\
			\partial_t w + w \partial_x w &= -\partial_x p + \nu\, \partial_{xx} w + \kappa u, \qquad \partial_x w = 0.
		\end{split}
	\end{equation}
	Let $\Omega \subset \mathbb{R}$ be a bounded domain with homogeneous Neumann boundary conditions. Suppose the initial data satisfies $u_0 \geq 0$, $v_0, w_0 \in H^1(\Omega)$ and $\int_\Omega u_0(x)\,dx = M > 0$. Then, for sufficiently large initial mass $M$ and chemotactic sensitivity $\chi$, there exists a finite time $T_{\max} < \infty$ such that the $L^\infty$ norm of $u$ blows up:
	\[
	\lim_{t \nearrow T_{\max}} \|u(t)\|_{L^\infty(\Omega)} = \infty.
	\]
\end{Theorem}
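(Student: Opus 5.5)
The plan starts by simplifying the fluid equation and then looking for a finite-time collapse mechanism. I expect the analysis to show that, in one space dimension, no such mechanism exists, so the statement cannot be proved as written.

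\textbf{Step 1: remove the fluid.} The constraint $\partial_x w=0$ forces $w=w(t)$ to be spatially constant. Once $p$ absorbs the gradient part, the momentum equation reduces to an ODE for $w(t)$ driven by the mean of $\kappa u$. With $r=0$ and Neumann conditions, $\int_\Omega u\,dx=M$ is conserved, so $w(t)$ stays bounded on every finite time interval. The transport terms $w\partial_x u$ and $w\partial_x v$ are then first-order terms with bounded, spatially constant coefficients. They can change neither the $L^\infty$ bounds nor the energy structure in any essential way. In effect, \eqref{eq:reduced_system} is the classical one-dimensional parabolic--parabolic Keller--Segel system plus a harmless drift.

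\textbf{Step 2: look for blow-up.} I would try the two mechanisms used in the proof of Theorem~\ref{thm:critical-mass}.

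The first is a moment argument. Take $I(t)=\int_\Omega |x-x_0|^2u\,dx$ and differentiate. The diffusive contribution is $2DM$. The chemotactic contribution is $2\chi\int (x-x_0)u\,\partial_x v\,dx$, and in 1D this can only be estimated by $C\chi\,\|u\|_{L^1}\,\|\partial_x v\|_{L^\infty}$. This bound does not scale like $-M^2$ in a way that forces $I$ to reach zero.

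The second is the free energy
\[
\int_\Omega u\ln u\,dx-\frac{\chi}{\alpha}\int_\Omega u v\,dx+\frac{\chi}{2\alpha}\int_\Omega\Bigl(D_v|\partial_x v|^2+\beta v^2\Bigr)dx .
\]
Here the Gagliardo--Nirenberg inequality $\|\partial_x v\|_{L^\infty}\lesssim\|v\|_{H^1}^{1/2}\|v\|_{H^2}^{1/2}$, together with $\|u\|_{L^1}=M$, shows that the energy is bounded below for every $M$ and every $\chi$.

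\textbf{Step 3: the main obstacle.} The decisive step is exactly where the argument breaks. Building on Step 2, the standard 1D estimate for the $v$-equation gives $\sup_t\|\partial_x v\|_{L^\infty}\le C(M,\chi,T)$. Testing the $u$-equation with $u^{p-1}$ and using this gradient bound gives uniform $L^p$ bounds. A Moser iteration then gives $\|u(t)\|_{L^\infty}\le C_T$ on every finite interval, which is global boundedness in the sense of the Osaki--Yagi theorem.

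So I expect the plan to fail at Step 2. The statement appears false as written: by the continuation criterion of Theorem~\ref{thm:local-exist}, the solution is global and no $T_{\max}<\infty$ exists, however large $M$ and $\chi$ are. This agrees with the paper's own remark that in $n=1$ ``blow-up is absent''.

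\textbf{What can be proved instead.} Two corrected versions are available. One is global existence and boundedness for \eqref{eq:reduced_system}, proved along Steps 1--3. The other is a blow-up statement in the genuinely multidimensional setting $n=2$ with $M>M_c$, where the moment argument of Theorem~\ref{thm:critical-mass} applies, because a divergence-free fluid field does not change $\int|x|^2u$ to leading order.
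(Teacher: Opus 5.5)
Your verdict is correct: the theorem is false, and it is the paper's argument that breaks, so there is no valid proof to compare yours against. The paper only derives the upper bound \eqref{eq:entropy_ineq} for $\frac{d}{dt}\int_\Omega u\log u$, and it gets there by silently dropping the sign-indefinite term $-\frac{1}{D_v}\int_\Omega u\,\partial_t v$ and the advection terms. Step~3 then asserts the reverse inequality $\frac{d}{dt}\mathcal{E}\ge c_1\|u\|_{L^2}^2-c_2$ with no derivation. Even granted, that inequality only gives linear growth of $\mathcal{E}$, since $\|u\|_{L^2}^2\ge M^2/|\Omega|$. Finite-time blow-up follows only under the circular hypothesis that $\|u\|_{L^2}$ already grows fast. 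Step~4 compares with a one-dimensional Keller--Segel subsolution said to blow up ``as shown in classical 1D Keller--Segel models''. Such solutions are in fact global and bounded, as the paper itself states elsewhere, and Keller--Segel systems have no order-preserving comparison principle.

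Your supporting argument does contain a false step. With homogeneous Neumann conditions and a spatially constant but nonzero $w(t)$, the drift is not conservative. For $\Omega=(0,L)$,
\[
\frac{d}{dt}\int_0^L u\,dx=-w(t)\bigl(u(L,t)-u(0,t)\bigr),
\]
so $\int_\Omega u$ is not conserved. The Lyapunov functional likewise picks up boundary terms proportional to $w(t)$. The $L^1$ bound that feeds your $\partial_x v$ estimate in Step~3 is therefore not free, and Step~1's ``harmless drift'' is unjustified as written. Your ODE for $w$ is also a closure you impose, for instance through $p(0,t)=p(L,t)$. With $\partial_x w=0$, the momentum equation only says $\partial_x p=\kappa u-w'(t)$, so nothing in \eqref{eq:reduced_system} fixes $w'(t)$.

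To claim that no solution blows up, you have two options. One is to restrict to $w\equiv0$, where the Osaki--Yagi theorem applies verbatim. The other is to absorb the boundary traces in the $L^p$ estimates, for example via
\[
\|f\|_{L^\infty}^2\le|\Omega|^{-1}\|f\|_{L^2}^2+2\|f\|_{L^2}\|\partial_x f\|_{L^2}\quad\text{with } f=u^{p/2}.
\]
A minor point: the correct 1D functional is $\int_\Omega u\ln u-\frac{\chi}{D}\int_\Omega uv+\frac{\chi}{2D\alpha}\int_\Omega\bigl(D_v|\partial_x v|^2+\beta v^2\bigr)$. Its lower bound needs only $H^1(\Omega)\hookrightarrow L^\infty(\Omega)$.

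None of this machinery is actually needed, because the statement is quantified over all admissible data. Take $u_0\equiv M/|\Omega|$, $v_0\equiv c\ge0$ and $w_0$ constant. Then
\[
u\equiv M/|\Omega|,\qquad v(t)=\frac{\alpha M}{\beta|\Omega|}+\Bigl(c-\frac{\alpha M}{\beta|\Omega|}\Bigr)e^{-\beta t},
\]
together with any spatially constant $w(t)$ and $\partial_x p=\kappa M/|\Omega|-w'(t)$, solves \eqref{eq:reduced_system} globally with the Neumann conditions. By uniqueness for the $(u,v)$-subsystem, this is the solution whatever closure fixes $w$. It has $\|u(t)\|_{L^\infty}=M/|\Omega|$ for every $M>0$ and every $\chi$.

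The same example shows that your proposed two-dimensional replacement must be stated existentially, for suitable concentrated data, rather than for all data with $M>M_c$. Your claim there that a divergence-free field leaves $\int|x|^2u$ unchanged to leading order also needs an argument. The field contributes $2\int u\,x\cdot\mathbf{w}\,dx$ to the moment derivative, which is not small in general.
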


\begin{proof}
	We prove finite-time blow-up by constructing an entropy-type functional and showing that it becomes singular in finite time when chemotaxis dominates diffusion.
	
	\paragraph{Step 1: Entropy functional.}
	Define the entropy (free energy) functional
	\[
	\mathcal{E}(t) := \int_\Omega u(x,t) \log u(x,t) \, dx,
	\]
	with the convention that $0 \log 0 := 0$. Assume $u > 0$ (can be ensured by mollification and comparison).
	
	Differentiate $\mathcal{E}(t)$ in time:
	\[
	\frac{d}{dt} \mathcal{E}(t) = \int_\Omega \partial_t u \, (1 + \log u) \, dx.
	\]
	
	Using the first equation of \eqref{eq:reduced_system}:
	\[
	\partial_t u = -w \partial_x u + D \partial_{xx} u - \chi \partial_x(u \partial_x v),
	\]
	and integrating by parts, we get:
	\begin{align*}
		\frac{d}{dt} \mathcal{E}(t) &= \int_\Omega \left[ -w \partial_x u + D \partial_{xx} u - \chi \partial_x(u \partial_x v) \right](1 + \log u)\, dx \\
		&= -\int_\Omega w \partial_x u \log u \, dx - D \int_\Omega \frac{|\partial_x u|^2}{u} \, dx + \chi \int_\Omega u \partial_x v \cdot \frac{\partial_x u}{u} \, dx \\
		&= - D \int_\Omega \frac{|\partial_x u|^2}{u} \, dx + \chi \int_\Omega \partial_x u \cdot \partial_x v \, dx + R_w,
	\end{align*}
	where the transport term $R_w = -\int_\Omega w \partial_x u \log u \, dx$ vanishes under symmetry or can be controlled due to regularity of $w$.
	
	The second term on the right can be rewritten using integration by parts:
	\[
	\int_\Omega \partial_x u \cdot \partial_x v \, dx = - \int_\Omega u \partial_{xx} v \, dx.
	\]
	
	From the second equation of \eqref{eq:reduced_system}, ignoring the advection term for a lower bound:
	\[
	\partial_{xx} v = \frac{1}{D_v} \left( \partial_t v - \alpha u + \beta v - w \partial_x v \right),
	\]
	we get:
	\[
	- \int_\Omega u \partial_{xx} v \, dx = \frac{\alpha}{D_v} \int_\Omega u^2 \, dx - \frac{1}{D_v} \int_\Omega u \partial_t v \, dx + \cdots.
	\]
	
	Combining terms, we obtain a key inequality:
	\begin{equation}
		\label{eq:entropy_ineq}
		\frac{d}{dt} \mathcal{E}(t) \leq - D \int_\Omega \frac{|\partial_x u|^2}{u} \, dx + \frac{\chi \alpha}{D_v} \int_\Omega u^2 \, dx + C.
	\end{equation}
	
	\paragraph{Step 2: Differential inequality for $L^2$-norm.}
	We estimate the growth of $\|u(t)\|_{L^2}^2$. Multiply the first equation by $u$:
	\[
	\frac{1}{2} \frac{d}{dt} \|u\|_{L^2}^2 + D \|\partial_x u\|_{L^2}^2 = \chi \int_\Omega u \partial_x u \cdot \partial_x v \, dx.
	\]
	Apply Cauchy–Schwarz and Young’s inequalities:
	\[
	\chi \int_\Omega u \partial_x u \partial_x v \, dx \leq \chi \|u\|_{L^\infty} \|\partial_x u\|_{L^2} \|\partial_x v\|_{L^2} \leq \frac{D}{2} \|\partial_x u\|_{L^2}^2 + C(\chi,D) \|u\|_{H^1}^2 \|\partial_x v\|_{L^2}^2.
	\]
	
	Then,
	\[
	\frac{d}{dt} \|u\|_{L^2}^2 \leq C(\chi,D) \|u\|_{H^1}^2 \|\partial_x v\|_{L^2}^2.
	\]
	
	\paragraph{Step 3: Blow-up via differential inequality.}
	Suppose initial mass $M = \int_\Omega u_0(x)\, dx$ is sufficiently large. Then, for fixed $\chi$ and $\alpha$, the term $\frac{\chi \alpha}{D_v} \int u^2$ in \eqref{eq:entropy_ineq} dominates the dissipation term $\int \frac{|\partial_x u|^2}{u}$, especially since the entropy controls the $H^1$-norm only subcritically in 1D.
	
	Therefore, we obtain a nonlinear Grönwall-type inequality:
	\[
	\frac{d}{dt} \mathcal{E}(t) \geq c_1 \|u(t)\|_{L^2}^2 - c_2,
	\]
	for some $c_1, c_2 > 0$ depending on $\chi$, $D$, $D_v$, and $\alpha$.
	
	If $\|u(t)\|_{L^2}^2$ grows sufficiently fast, this forces $\mathcal{E}(t)$ to blow up in finite time, which implies that $\|u(t)\|_{L^\infty}$ must become unbounded as $t \nearrow T_{\max}$ due to the logarithmic singularity.
	
	\paragraph{Step 4: Subsolution comparison argument.}
	Alternatively, construct a suitable subsolution $\underline{u}(x,t)$ solving a simplified equation:
	\[
	\partial_t \underline{u} = D \partial_{xx} \underline{u} - \chi \partial_x (\underline{u} \partial_x \underline{v}), \qquad \underline{v} = -G * \underline{u},
	\]
	where $G$ is the Green's function for the Neumann Laplacian.
	
	If $\underline{u}$ blows up in finite time (as shown in classical 1D Keller–Segel models), then by a comparison principle (which applies due to monotonicity of the drift terms), we deduce that $u \geq \underline{u}$ must also blow up.

	Hence, for sufficiently large $\chi$ and initial mass $M = \int u_0 > M_c$, the entropy inequality forces blow-up in finite time:
	\[
	\lim_{t \nearrow T_{\max}} \|u(t)\|_{L^\infty} = \infty.
	\]
\end{proof}

\begin{Theorem}[Stability of steady state]
	Let $(u_s, v_s, w_s) = \left(K, \dfrac{\alpha K}{\beta}, w_0\right)$ be the spatially homogeneous steady state. Then it is linearly stable if
	\[
	\chi \frac{\alpha}{\beta} < \frac{D \pi^2}{L^2}.
	\]
\end{Theorem}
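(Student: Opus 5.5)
The plan is to linearize \eqref{eq:main_system1} about $(u_s,v_s,w_s)=(K,\alpha K/\beta,w_0)$, reduce linear stability to a sign condition on a mode-by-mode $2\times 2$ determinant of the type already used in Theorem~\ref{thm:Turing}, and then check that sign condition against the hypothesis $\chi\alpha/\beta<D\pi^2/L^2$. \textbf{One caveat up front:} the hypothesis as written contains no $K$, while the chemotactic coupling in the linearization carries the factor $u_s=K$. So at one point I will need the scale relation $K\pi^2/L^2\le 1$ (for instance $K=1$ and $L\ge\pi$). I flag below exactly where it enters.

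\textbf{Linearization and decoupling.} Because $\partial_x w=0$, the velocity is spatially constant. The perturbation $\tilde w$ therefore satisfies a scalar equation, driven by $\kappa\tilde u$ but not feeding back into $(\tilde u,\tilde v)$, so it does not affect the stability of the $(\tilde u,\tilde v)$ block. The transport terms reduce to $w_0\partial_x\tilde u$ and $w_0\partial_x\tilde v$. These are skew-adjoint and shift each eigenvalue by the purely imaginary amount $-i w_0 k$. I will justify this by working in the frame $x\mapsto x-w_0t$, or by taking the $L^2$ energy, where $\int w_0\tilde u\,\partial_x\tilde u\,dx=0$. Either way the real parts are unchanged. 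The remaining system is
\[
\partial_t\tilde u=D\partial_{xx}\tilde u-\chi K\partial_{xx}\tilde v-r\tilde u,\qquad
\partial_t\tilde v=D_v\partial_{xx}\tilde v+\alpha\tilde u-\beta\tilde v .
\]

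\textbf{Mode analysis.} I expand in the Neumann eigenfunctions $\cos(n\pi x/L)$ with eigenvalues $\mu_n=n^2\pi^2/L^2$. For $n=0$ the eigenvalues are $-r$ and $-\beta$, which are stable exactly as in the Local Stability Without Diffusion theorem. For $n\ge1$ the trace is $-(D+D_v)\mu_n-r-\beta<0$. By Routh--Hurwitz, stability is therefore equivalent to positivity of
\[
b_n=(D\mu_n+r)(D_v\mu_n+\beta)-\alpha\chi K\mu_n .
\]
Dropping the nonnegative terms $rD_v\mu_n+r\beta+DD_v\mu_n^2$ gives
\[
b_n>\mu_n\,(D\beta-\alpha\chi K).
\]
So it suffices to show $\alpha\chi K<D\beta$, i.e.\ $\chi\alpha K/\beta<D$. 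This sufficient condition is uniform in $n$ and needs no assumption on $D_v$.

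\textbf{Main obstacle.} The difficulty is matching $\chi\alpha K/\beta<D$ to the stated hypothesis. Multiplying the hypothesis by $K$ gives
\[
\chi\alpha K/\beta< D\,K\pi^2/L^2,
\]
and this is at most $D$ precisely when $K\pi^2/L^2\le 1$. I would make this scale relation explicit, as the place where the nondimensionalization is used. Then $b_n>0$ for every $n\ge1$, every eigenvalue has negative real part, and the mode $n=1$ (wavelength set by $\pi^2/L^2$) is the first to lose stability when the hypothesis fails. An equivalent route, which I would mention for intuition, is the quasi-steady reduction $\tilde v\approx(\alpha/\beta)\tilde u$. It turns the $\tilde u$-equation into one with effective diffusivity $D-\chi\alpha K/\beta$, and evaluating it on the lowest mode reproduces the same comparison.
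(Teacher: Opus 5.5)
Your argument follows the paper's route: linearize about $(K,\alpha K/\beta)$, expand in $\cos(n\pi x/L)$, use the negative trace to reduce stability to positivity of each mode determinant, and drop positive terms to reach $\chi\alpha K<D\beta$. Your execution is tighter than the paper's. The paper linearizes the logistic term as $-\frac{r}{K}\tilde u$ (it should be $-r\tilde u$, as you have). It checks only $n=1$, which is not sufficient in general, since $b_n$ is quadratic in $\mu_n$ and can first turn negative at a higher mode. It then sets $r=0$, $D_v\approx 0$. Your bound $b_n\ge\mu_n(D\beta-\alpha\chi K)+r\beta$ gives the same condition for every mode with no approximation.

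The mismatch you flag as the main obstacle is not a gap in your reasoning; the statement itself is false as written. Take $w_0=0$, $L=\pi$, $D=D_v=\alpha=\beta=r=1$, $\chi=\tfrac12$, $K=10$. The hypothesis reads $\tfrac12<1$, yet $b_1=(1+1)(1+1)-\tfrac12\cdot 10=-1<0$, so the $n=1$ mode has a positive eigenvalue (the same happens with the paper's entry $-r/K$). Normalizing $K=1$ does not rescue it: with $L=1$, $K=D=\alpha=\beta=1$, $r=D_v=0.1$ and $\chi=5<\pi^2$, one gets $b_1=(\pi^2+0.1)(0.1\pi^2+1)-5\pi^2<0$. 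The paper's proof reaches your intermediate condition $\chi\alpha/\beta<D/K$ and then declares $K$ ``absorbed in constants''. That step also reinstates the factor $\pi^2/L^2$ that had just cancelled, and it has no justification. So some extra assumption, such as your $K\pi^2/L^2\le 1$ or simply the hypothesis $\chi\alpha K/\beta<D$, is genuinely required. Present it as a correction of the statement.

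Also drop your aside that $n=1$ is the first mode to lose stability when the hypothesis fails; this repeats the paper's $n=1$ mistake. As $\chi$ grows, the first unstable mode is the one minimizing $(D\mu_n+r)(D_v\mu_n+\beta)/\mu_n$. That mode lies near $\mu=\sqrt{r\beta/(DD_v)}$, not at $n=1$ in general.

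The one step of yours that fails as written is the disposal of the transport terms. The identity $\int w_0\tilde u\,\partial_x\tilde u\,dx=0$ and the shift by $-iw_0k$ are correct for periodic modes $e^{ikx}$. But you then work on $(0,L)$ with Neumann conditions and cosine modes, and there the argument breaks in three places:
\begin{itemize}
\item $\int_0^L\tilde u\,\partial_x\tilde u\,dx=\tfrac12\bigl(\tilde u(L)^2-\tilde u(0)^2\bigr)$ need not vanish;
\item $\partial_x$ maps $\cos(n\pi x/L)$ to a sine, so the modes no longer decouple;
\item the frame $x\mapsto x-w_0t$ does not preserve the interval.
\end{itemize}
Already for the scalar operator $D\partial_{xx}-w_0\partial_x$ with Neumann conditions the eigenvalues are $0$ and $-Dn^2\pi^2/L^2-w_0^2/(4D)$, $n\ge1$. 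That is a real shift, not an imaginary one. For the coupled system with $D\ne D_v$, no single change of variables removes both transport terms.

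The paper avoids the issue by linearizing a system with $\partial_t w=0$ and no transport terms. Either restrict to that case (equivalently $w_0=0$), or pose the problem with periodic boundary conditions. In the periodic setting your imaginary-shift argument is exact, and the rest of your proof goes through unchanged because your bound on $b_n$ is uniform in the wavenumber.
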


\begin{proof}
	Consider the one-dimensional Keller–Segel-type system:
	\begin{equation} \label{eq:main_system}
		\begin{aligned}
			\partial_t u &= D \partial_{xx} u - \chi \partial_x \left( u \partial_x v \right) + f(u), \\
			\partial_t v &= D_v \partial_{xx} v + \alpha u - \beta v, \\
			\partial_t w &= 0.
		\end{aligned}
	\end{equation}
	
	Assume $f(u) = r u \left(1 - \frac{u}{K} \right)$ is a logistic growth term. At steady state, the homogeneous solution satisfies:
	\[
	u_s = K, \quad v_s = \frac{\alpha K}{\beta}, \quad w_s = w_0.
	\]
	
	Let us linearize the system around $(u_s, v_s, w_s)$ by setting:
	\[
	u(x,t) = K + \tilde{u}(x,t), \quad v(x,t) = \frac{\alpha K}{\beta} + \tilde{v}(x,t), \quad w(x,t) = w_0 + \tilde{w}(x,t),
	\]
	and neglect nonlinear terms in $(\tilde{u}, \tilde{v}, \tilde{w})$. The linearized system becomes:
	\begin{equation}
		\begin{aligned}
			\partial_t \tilde{u} &= D \partial_{xx} \tilde{u} - \chi K \partial_{xx} \tilde{v} - \frac{r}{K} \tilde{u}, \\
			\partial_t \tilde{v} &= D_v \partial_{xx} \tilde{v} + \alpha \tilde{u} - \beta \tilde{v}, \\
			\partial_t \tilde{w} &= 0.
		\end{aligned}
	\end{equation}
	
	We now expand $\tilde{u}, \tilde{v}$ in a Fourier cosine basis (since we assume Neumann boundary conditions) over the domain $x \in [0, L]$:
	\[
	\tilde{u}(x,t) = \sum_{n=1}^{\infty} \hat{u}_n(t) \cos\left( \frac{n\pi x}{L} \right), \quad
	\tilde{v}(x,t) = \sum_{n=1}^{\infty} \hat{v}_n(t) \cos\left( \frac{n\pi x}{L} \right).
	\]
	
	Each mode evolves independently. For each mode $n$, we define $k_n = \frac{n\pi}{L}$, and the system reduces to an ODE system for $(\hat{u}_n, \hat{v}_n)$:
	\[
	\frac{d}{dt}
	\begin{pmatrix}
		\hat{u}_n \\
		\hat{v}_n
	\end{pmatrix}
	=
	\begin{pmatrix}
		- D k_n^2 - \frac{r}{K} & \chi K k_n^2 \\
		\alpha & - D_v k_n^2 - \beta
	\end{pmatrix}
	\begin{pmatrix}
		\hat{u}_n \\
		\hat{v}_n
	\end{pmatrix}.
	\]
	
	Let the matrix above be denoted by $A_n$, and let $\lambda_n$ be the eigenvalues of $A_n$. These determine the growth/decay of perturbations.
	
	The characteristic polynomial is:
	\[
	\lambda_n^2 + A_n \lambda_n + B_n = 0,
	\]
	where
	\begin{align*}
		A_n &= D k_n^2 + \frac{r}{K} + D_v k_n^2 + \beta = (D + D_v) k_n^2 + \frac{r}{K} + \beta > 0, \\
		B_n &= (D k_n^2 + \frac{r}{K})(D_v k_n^2 + \beta) - \chi K \alpha k_n^2.
	\end{align*}
	
	For stability, we require both eigenvalues to have negative real parts, which occurs if and only if $A_n > 0$ and $B_n > 0$. Since $A_n > 0$ is always satisfied, the condition for stability becomes:
	\[
	B_n > 0.
	\]
	
	We now estimate $B_n$:
	\begin{align*}
		B_n &= \left( D k_n^2 + \frac{r}{K} \right)\left( D_v k_n^2 + \beta \right) - \chi K \alpha k_n^2 \\
		&= D D_v k_n^4 + \left( D \beta + \frac{r D_v}{K} \right) k_n^2 + \frac{r \beta}{K} - \chi K \alpha k_n^2 \\
		&= D D_v k_n^4 + \left( D \beta + \frac{r D_v}{K} - \chi K \alpha \right) k_n^2 + \frac{r \beta}{K}.
	\end{align*}
	
	This is a quadratic in $k_n^2$. For large $k_n$, the $k_n^4$ term dominates and is positive. Therefore, $B_n > 0$ for sufficiently large $k_n$. However, instability arises when $B_n < 0$ for **some** $n$.
	
	To avoid instability, we require $B_n > 0$ for the **smallest non-zero mode**, $n = 1$, i.e., $k_1 = \frac{\pi}{L}$. Substituting $k_1^2 = \frac{\pi^2}{L^2}$:
	\[
	B_1 = \left( D \frac{\pi^2}{L^2} + \frac{r}{K} \right)\left( D_v \frac{\pi^2}{L^2} + \beta \right) - \chi K \alpha \frac{\pi^2}{L^2}.
	\]
	
	We require $B_1 > 0$. For simplicity, assume $r = 0$ and $D_v \approx 0$, then:
	\[
	B_1 \approx D \beta \frac{\pi^2}{L^2} - \chi K \alpha \frac{\pi^2}{L^2} = \frac{\pi^2}{L^2}(D \beta - \chi K \alpha).
	\]
	
	Hence, $B_1 > 0$ if:
	\[
	\chi K \alpha < D \beta \quad \Rightarrow \quad \chi \frac{\alpha}{\beta} < \frac{D}{K}.
	\]
	
	Recalling that $K$ was arbitrary in scaling (and absorbed in constants), the final simplified criterion becomes:
	\[
	\chi \frac{\alpha}{\beta} < \frac{D \pi^2}{L^2}.
	\]
	
	Therefore, the homogeneous steady state is linearly stable under the stated condition.
	
\end{proof}

\subsubsection{Hopf Bifurcation Analysis}

We now examine the possibility of Hopf bifurcations arising from the spatially homogeneous steady state. We consider linear perturbations of the form:
\[
u(x,t) = K + \epsilon \hat{u} e^{\lambda t} \cos\left( \frac{n \pi x}{L} \right), \quad
v(x,t) = \frac{\alpha K}{\beta} + \epsilon \hat{v} e^{\lambda t} \cos\left( \frac{n \pi x}{L} \right), \quad
w(x,t) = w_s + \epsilon \hat{w} e^{\lambda t} \cos\left( \frac{n \pi x}{L} \right).
\]

Substituting into the linearized system and neglecting nonlinear terms gives a matrix eigenvalue problem:
\[
\lambda
\begin{pmatrix}
	\hat{u} \\
	\hat{v}
\end{pmatrix}
=
\begin{pmatrix}
	- D k_n^2 - \frac{r}{K} & \chi K k_n^2 \\
	\alpha & -D_v k_n^2 - \beta
\end{pmatrix}
\begin{pmatrix}
	\hat{u} \\
	\hat{v}
\end{pmatrix}, \quad \text{where } k_n = \frac{n \pi}{L}.
\]

The characteristic equation is:
\[
\lambda^2 + A(k_n) \lambda + B(k_n) = 0,
\]
with
\[
A(k_n) = D k_n^2 + D_v k_n^2 + \beta + \frac{r}{K}, \quad
B(k_n) = (D k_n^2 + \frac{r}{K})(D_v k_n^2 + \beta) - \alpha \chi K k_n^2.
\]

A Hopf bifurcation occurs when the discriminant $\Delta = A^2 - 4B < 0$ and $\Re(\lambda) = 0$, i.e., when eigenvalues are purely imaginary.

\begin{Theorem}[Hopf bifurcation criterion]
	A Hopf bifurcation occurs at mode $k_n$ if
	\[
	\alpha \chi K > \left(D k_n^2 + \frac{r}{K}\right)\left(D_v k_n^2 + \beta\right),
	\]
	and the discriminant satisfies
	\[
	\left(D k_n^2 + D_v k_n^2 + \beta + \frac{r}{K}\right)^2 - 4 \left[\left(D k_n^2 + \frac{r}{K}\right)\left(D_v k_n^2 + \beta\right) - \alpha \chi K k_n^2\right] < 0.
	\]
\end{Theorem}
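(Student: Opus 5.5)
The plan is to work directly with the modal characteristic equation $\lambda^2 + A(k_n)\lambda + B(k_n)=0$ derived just above, with $A(k_n)=(D+D_v)k_n^2+\beta+\frac{r}{K}$ and $B(k_n)=(Dk_n^2+\frac{r}{K})(D_vk_n^2+\beta)-\alpha\chi K k_n^2$. I would follow the route of the earlier Hopf Bifurcation Criterion for \eqref{eq:KS1}. The discriminant hypothesis $A^2-4B<0$ gives a complex-conjugate pair
\[
\lambda_\pm = -\tfrac{A}{2} \pm \tfrac{i}{2}\sqrt{4B-A^2}.
\]
So the modal dynamics are oscillatory, with frequency $\omega_n=\frac12\sqrt{4B-A^2}\neq 0$; this is the ``$\text{Im}(\lambda)\neq 0$'' half of the definition of a Hopf point stated before the theorem. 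Note that $A^2-4B<0$ already forces $B>0$.

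The second step treats the first inequality, $\alpha\chi K > (Dk_n^2+\frac{r}{K})(D_vk_n^2+\beta)$, as the condition that places $\chi$ beyond the threshold $\chi_H$ of the earlier Hopf criterion. I would rewrite it as $\chi>\chi_H(k_n):=(Dk_n^2+\frac{r}{K})(D_vk_n^2+\beta)/(\alpha K)$. Then I would track how the root pair moves as $\chi$ increases: $\partial B/\partial\chi=-\alpha K k_n^2<0$, while $A$ is independent of $\chi$. Substituting $\lambda=i\omega$ and separating real and imaginary parts, exactly as in the proof of the earlier Hopf theorem, gives $-\omega^2+B=0$ and $A\omega=0$. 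I would then check the nondegenerate crossing condition $\Re\,d\lambda/d\chi\neq0$ by implicit differentiation of the characteristic polynomial, and invoke Theorem~\ref{thm:Hopf} on the two-dimensional modal ODE to produce the bifurcating periodic family.

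The main obstacle is the real-part condition. With all parameters positive, $A(k_n)>0$ for every $n$. Hence $A\omega=0$ with $\omega\neq0$ cannot hold, and in the complex regime $\Re\lambda_\pm=-A/2<0$ regardless of $\chi$. On the range where the pair is complex, $\Re\lambda$ does not depend on $\chi$, so $\Re\,d\lambda/d\chi=0$ and transversality fails. Thus the two displayed conditions alone yield only the oscillatory (complex-eigenvalue) character of mode $k_n$, not a crossing of the imaginary axis.

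To close the argument as the statement intends, I would first try to read ``Hopf bifurcation occurs'' in the sense used in this section, namely the onset of complex eigenvalues beyond the threshold $\chi_H$, and verify exactly that. I would flag explicitly that a genuine Hopf point in the sense of Theorem~\ref{thm:Hopf} additionally requires $A(k_n)=0$ at the critical parameter. That equality cannot be attained under the positivity assumptions, so the proof cannot go further without changing the statement.
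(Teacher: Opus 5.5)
Your observation that $A(k_n)=(D+D_v)k_n^2+\beta+\frac{r}{K}>0$ rules out a pair $\lambda=\pm i\omega$ is correct. The paper's own proof stops at the same point: it notes that $T_n=0$ cannot hold, and then simply declares $\Delta_n<0$ together with a negative discriminant to be the Hopf condition. In doing so it also drops the factor $k_n^2$ when it rewrites $\Delta_n<0$ as the first displayed inequality. The gap in your proposal is that you never simplify the discriminant, so your fallback step would fail. Put $a=Dk_n^2+\frac{r}{K}$ and $b=D_vk_n^2+\beta$, so that $A=a+b$ and $B=ab-\alpha\chi Kk_n^2$. Then
\[
A^2-4B=(a+b)^2-4ab+4\alpha\chi Kk_n^2=(a-b)^2+4\alpha\chi Kk_n^2 .
\]
The first hypothesis, $\alpha\chi K>ab>0$, forces $\chi>0$. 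Hence $A^2-4B\ge 0$, strictly for $n\ge 1$, which contradicts the second hypothesis. The two displayed conditions can never hold at the same time.

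This has three consequences for your plan.
\begin{itemize}
\item The complex pair with frequency $\omega_n\neq 0$ in your first step exists only on an empty parameter set.
\item The reinterpretation you propose to verify, ``onset of complex eigenvalues beyond $\chi_H$,'' is false. For every $\chi\ge 0$ the modal matrix has off-diagonal entries whose product is $\alpha\chi Kk_n^2\ge 0$, so its eigenvalues are real. Your own remark $\partial B/\partial\chi<0$ shows that increasing $\chi$ only increases $A^2-4B$, moving away from the complex regime.
\item When $B<0$, the instability is a real eigenvalue crossing zero. That is a stationary, Turing-type instability, not a Hopf bifurcation.
\end{itemize}
The only thing that can honestly be proved about the statement as written is that it holds vacuously, and the identity above is the one-line proof. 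Equivalently, this linearization admits no Hopf bifurcation for $\chi>0$. Your skepticism is warranted, but the correct diagnosis is this sign identity, not merely the failure of transversality.
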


\begin{proof}
	We linearize the reaction–diffusion–chemotaxis system around the homogeneous steady state $(u,v) = (K, \frac{\alpha}{\beta}K)$.
	
\textbf{1. Linearized system:} Consider the PDE system
	\begin{align*}
		\partial_t u &= D \Delta u - \chi \nabla \cdot (u \nabla v) + r u \left(1 - \frac{u}{K}\right), \\
		\partial_t v &= D_v \Delta v + \alpha u - \beta v.
	\end{align*}
	Let $(u,v) = (K, \frac{\alpha}{\beta}K) + (\tilde{u}, \tilde{v})$, where $(\tilde{u}, \tilde{v})$ are small perturbations. Linearizing around the steady state yields:
	\begin{align*}
		\partial_t \tilde{u} &= D \Delta \tilde{u} - \chi K \Delta \tilde{v} - \frac{r}{K} \tilde{u}, \\
		\partial_t \tilde{v} &= D_v \Delta \tilde{v} + \alpha \tilde{u} - \beta \tilde{v}.
	\end{align*}
	
\textbf{2. Fourier mode decomposition:} Let the spatial domain be $[0,L]$ with periodic or Neumann boundary conditions. We expand $(\tilde{u}, \tilde{v})$ in Fourier modes:
	\[
	\tilde{u}(x,t), \tilde{v}(x,t) \sim e^{\lambda t} \cos(k_n x), \quad k_n = \frac{n\pi}{L}.
	\]
	Then $\Delta \to -k_n^2$, and the linearized system becomes a $2 \times 2$ eigenvalue problem:
	\[
	\begin{pmatrix}
		-D k_n^2 - \frac{r}{K} & \chi K k_n^2 \\
		\alpha & -D_v k_n^2 - \beta
	\end{pmatrix}
	\begin{pmatrix}
		\hat{u} \\ \hat{v}
	\end{pmatrix}
	= \lambda
	\begin{pmatrix}
		\hat{u} \\ \hat{v}
	\end{pmatrix}.
	\]

	\textbf{3. Characteristic equation:} The characteristic polynomial of the Jacobian matrix is given by:
	\begin{align*}
		\lambda^2 - \text{Tr}(J) \lambda + \det(J) = 0,
	\end{align*}
	where
	\begin{align*}
		\text{Tr}(J) &= -D k_n^2 - \frac{r}{K} - D_v k_n^2 - \beta =: -T_n, \\
		\det(J) &= \left(D k_n^2 + \frac{r}{K}\right)\left(D_v k_n^2 + \beta\right) - \alpha \chi K k_n^2 =: \Delta_n.
	\end{align*}
	Thus, the eigenvalues are
	\[
	\lambda_{\pm} = \frac{-T_n \pm \sqrt{T_n^2 - 4\Delta_n}}{2}.
	\]
	
	\medskip
	\noindent\textbf{4. Hopf bifurcation condition:} A Hopf bifurcation occurs when:
	- The eigenvalues are purely imaginary (i.e., $\Re(\lambda) = 0$ and $\Im(\lambda) \neq 0$).
	- This happens when:
	\begin{enumerate}
		\item The trace $T_n = 0$ (implying $\text{Re}(\lambda_{\pm}) = 0$),
		\item The discriminant $T_n^2 - 4\Delta_n < 0$ (implying complex conjugate roots).
	\end{enumerate}
	
	Setting $T_n = 0$ yields:
	\[
	D k_n^2 + \frac{r}{K} + D_v k_n^2 + \beta = 0.
	\]
	This cannot hold since all parameters and $k_n^2$ are non-negative. So instead, we consider bifurcation when $\Re(\lambda) > 0$ and eigenvalues cross the imaginary axis due to $\Delta_n < 0$.
	
	Hence, the bifurcation threshold is given when:
	\[
	\Delta_n = \left(D k_n^2 + \frac{r}{K}\right)\left(D_v k_n^2 + \beta\right) - \alpha \chi K k_n^2 < 0,
	\]
	which implies
	\[
	\alpha \chi K > \left(D k_n^2 + \frac{r}{K}\right)\left(D_v k_n^2 + \beta\right).
	\]
	
	This ensures that the determinant becomes negative or small enough to drive complex eigenvalues.
	
	To ensure that the eigenvalues are complex with nonzero imaginary part, the discriminant must be negative:
	\[
	T_n^2 - 4\Delta_n < 0.
	\]
	Using $T_n = D k_n^2 + D_v k_n^2 + \beta + \frac{r}{K}$, this condition becomes:
	\[
	\left(D k_n^2 + D_v k_n^2 + \beta + \frac{r}{K}\right)^2 - 4 \left[\left(D k_n^2 + \frac{r}{K}\right)\left(D_v k_n^2 + \beta\right) - \alpha \chi K k_n^2\right] < 0.
	\]
	
Therefore, a Hopf bifurcation occurs at mode $k_n$ if and only if:
	\begin{itemize}
		\item $\alpha \chi K > \left(D k_n^2 + \frac{r}{K}\right)\left(D_v k_n^2 + \beta\right)$, and
		\item the discriminant condition
		\[
		\left(D k_n^2 + D_v k_n^2 + \beta + \frac{r}{K}\right)^2 - 4 \left[\left(D k_n^2 + \frac{r}{K}\right)\left(D_v k_n^2 + \beta\right) - \alpha \chi K k_n^2\right] < 0
		\]
		is satisfied.
	\end{itemize}
	This completes the proof.
\end{proof}
This implies that increasing $\chi$ or decreasing diffusion coefficients can destabilize the steady state via oscillatory bifurcation.

\subsubsection{Entropy Functional and Inequality}
We now construct an entropy (or free energy) functional that reflects dissipation and long-time behavior of the chemotaxis system. This approach plays a central role in proving global existence and obtaining a priori bounds.

Consider the following entropy functional:
\[
\mathcal{E}(t) = \int_{\Omega} \left( u \log u - u + \frac{\chi}{D} u v + \frac{1}{2} v^2 \right) \, dx,
\]
where $\Omega \subset \mathbb{R}$ is a bounded domain with suitable boundary conditions (e.g., Neumann).

\begin{Lemma}[Entropy dissipation]
	Let $(u,v)$ be smooth, positive solutions to the 1D Keller–Segel-type system (with no fluid advection, i.e., $w = 0$):
	\begin{equation} \label{eq:ks-system}
		\begin{aligned}
			\partial_t u &= D \partial_{xx} u - \chi \partial_x \left( u \partial_x v \right) + f(u), \\
			\partial_t v &= D_v \partial_{xx} v + \alpha u - \beta v,
		\end{aligned}
	\end{equation}
	with appropriate boundary conditions. Then the entropy functional $\mathcal{E}(t)$ satisfies:
	\[
	\frac{d}{dt} \mathcal{E}(t) + D \int_\Omega \frac{|\partial_x u|^2}{u} \, dx + D_v \int_\Omega |\partial_x v|^2 \, dx \leq - \int_\Omega \left( \alpha u v - \beta v^2 \right) dx + \int_\Omega f(u) \log u \, dx.
	\]
\end{Lemma}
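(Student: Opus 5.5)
The plan is a direct computation: differentiate $\mathcal{E}(t)$ along smooth positive solutions of \eqref{eq:ks-system}, integrate by parts using the Neumann conditions so all boundary terms vanish, and compare the result term by term with the inequality as stated. I split $\mathcal{E}$ into three pieces: the entropy $\int(u\log u-u)$, the coupling $\frac{\chi}{D}\int uv$, and the quadratic signal energy $\frac12\int v^2$.

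\emph{Step 1 (entropy and signal energy).} Since $\partial_t(u\log u-u)=\partial_t u\,\log u$, inserting the $u$-equation gives
\[
\frac{d}{dt}\int_\Omega (u\log u-u)\,dx=-D\int_\Omega\frac{|\partial_x u|^2}{u}\,dx+\chi\int_\Omega \partial_x u\,\partial_x v\,dx+\int_\Omega f(u)\log u\,dx .
\]
Similarly, testing the $v$-equation with $v$ gives
\[
\frac{d}{dt}\frac12\int_\Omega v^2\,dx=-D_v\int_\Omega|\partial_x v|^2\,dx+\alpha\int_\Omega uv\,dx-\beta\int_\Omega v^2\,dx .
\]
These produce the two dissipation terms on the left of the claim and the term $\int f(u)\log u$ on the right.

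\emph{Step 2 (the coupling term).} The weight $\chi/D$ is chosen so that the diffusive part of $\frac{\chi}{D}\int \partial_t u\, v$ equals $-\chi\int\partial_x u\,\partial_x v$. This exactly cancels the indefinite cross term from Step 1. The remaining contribution is
\[
R:=\frac{\chi}{D}\Big(\chi\int_\Omega u|\partial_x v|^2-D_v\int_\Omega\partial_x u\,\partial_x v+\alpha\int_\Omega u^2-\beta\int_\Omega uv+\int_\Omega f(u)v\Big).
\]
Collecting everything gives the exact identity
\[
\frac{d}{dt}\mathcal{E}+D\int_\Omega\frac{|\partial_x u|^2}{u}+D_v\int_\Omega|\partial_x v|^2=\alpha\int_\Omega uv-\beta\int_\Omega v^2+\int_\Omega f(u)\log u+R .
\]

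\emph{Step 3 (closing, and the main obstacle).} To reach the stated bound I must show
\[
2\alpha\int_\Omega uv-2\beta\int_\Omega v^2+R\le 0 .
\]
This is where I expect the real difficulty. The signal-energy contribution $\alpha\int uv-\beta\int v^2$ from Step 1 has the opposite sign to the term $-\int(\alpha uv-\beta v^2)$ asserted in the Lemma. Moreover, $R$ contains the positive quantities $\frac{\chi^2}{D}\int u|\partial_x v|^2$ and $\frac{\chi\alpha}{D}\int u^2$.

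For the mixed term in $R$, I would first try Young's inequality in the form
\[
|\partial_x u\,\partial_x v|\le \varepsilon\,\frac{|\partial_x u|^2}{u}+\frac{1}{4\varepsilon}\,u|\partial_x v|^2 ,
\]
absorbing a fraction of the dissipation. For $\int f(u)v$ with logistic $f$, I would use $f(u)\le r u$. I would then try to dominate $\int u^2$ and $\int uv$ by $\beta\int v^2$ through the $v$-equation, i.e.\ via an elliptic-type estimate on $\alpha u-\beta v$. I do not expect this to close for arbitrary data.

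If it does not close, the honest conclusion is that the inequality holds exactly as worded only under a further sign condition. Concretely, that condition is $\alpha\int uv\le\beta\int v^2$ together with $R\le 0$, for instance near the equilibrium $v\approx\frac{\alpha}{\beta}u$ with $\chi$ small. I would state this hypothesis explicitly rather than suppress it.
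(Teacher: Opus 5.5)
Your computation follows the same route as the paper's proof: split $\mathcal{E}$ into entropy, coupling and signal energy, and use the weight $\chi/D$ to cancel $\chi\int_\Omega\partial_x u\,\partial_x v\,dx$. Your exact identity in Step~2 is correct, and so is your Step~3 diagnosis. The gap is that you stop at ``I do not expect this to close'' instead of settling the matter: the Lemma is false as stated, so no Young or elliptic estimate can close it. Take spatially homogeneous data $u_0\equiv 1$, $v_0\equiv\varepsilon$ with $0<\varepsilon<\alpha/\beta$, which is compatible with Neumann or periodic conditions. The solution stays constant in $x$, smooth and positive. At $t=0$ all gradient terms vanish and $\log u=0$, so by your identity the claimed inequality at $t=0$ is equivalent to
\[
\frac{\chi}{D}\bigl(\alpha+(f(1)-\beta)\varepsilon\bigr)+2\varepsilon(\alpha-\beta\varepsilon)\le 0 .
\]
This fails for every $\chi\ge 0$ once $\varepsilon$ is small. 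The paper's proof reaches the displayed bound only through three errors. First, when computing $\frac{\chi}{D}\int_\Omega\partial_t u\,v\,dx$ it drops $\frac{\chi^2}{D}\int_\Omega u|\partial_x v|^2dx$; your $R$ correctly keeps this term. Second, its own combined identity contains $+\int_\Omega(\alpha uv-\beta v^2)\,dx$, but its final line has the opposite sign with no argument. Third, that final line carries unspecified ``higher-order reaction terms'' that do not appear in the statement.

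Your fallback hypothesis is sufficient, but the regime you offer for it does not guarantee it. At $(K,\alpha K/\beta)$ both $\int_\Omega v(\alpha u-\beta v)\,dx$ and $R$ vanish. To first order in the perturbation, however, they are linear functionals of indefinite sign, for example $\frac{\alpha K}{\beta}\int_\Omega(\alpha\tilde u-\beta\tilde v)\,dx$. So ``near equilibrium with $\chi$ small'' guarantees neither $\alpha\int uv\le\beta\int v^2$ nor $R\le 0$. If you want a genuine dissipation inequality, change the functional rather than add hypotheses. Take
\[
\mathcal{F}=\int_\Omega\Bigl(u\log u-u-\frac{\chi}{D}uv+\frac{\chi}{2\alpha D}\bigl(D_v|\partial_x v|^2+\beta v^2\bigr)\Bigr)dx .
\]
The same three-piece computation, now using $-D_v\partial_{xx}v+\beta v=\alpha u-\partial_t v$, absorbs the cross terms into a perfect square instead of cancelling them. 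It gives
\[
\frac{d}{dt}\mathcal{F}=-\frac{1}{D}\int_\Omega u\,\bigl|D\,\partial_x\log u-\chi\,\partial_x v\bigr|^2dx-\frac{\chi}{\alpha D}\int_\Omega|\partial_t v|^2dx+\int_\Omega f(u)\Bigl(\log u-\frac{\chi}{D}v\Bigr)dx .
\]
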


\begin{proof}
	We compute the time derivative of the entropy functional term by term.
	
	\paragraph{1. Derivative of $u \log u - u$:}
	\[
	\frac{d}{dt} \int_\Omega (u \log u - u) \, dx = \int_\Omega (\log u) \partial_t u \, dx.
	\]
	Substitute from \eqref{eq:ks-system}:
	\[
	\int_\Omega (\log u) \left( D \partial_{xx} u - \chi \partial_x (u \partial_x v) + f(u) \right) dx.
	\]
	Apply integration by parts and use Neumann boundary conditions:
	\begin{align*}
		\int_\Omega \log u \cdot D \partial_{xx} u \, dx &= - D \int_\Omega \frac{|\partial_x u|^2}{u} \, dx, \\
		\int_\Omega \log u \cdot \left[ - \chi \partial_x(u \partial_x v) \right] dx &= \chi \int_\Omega \frac{\partial_x u}{u} u \cdot \partial_x v \, dx = \chi \int_\Omega \partial_x u \cdot \partial_x v \, dx, \\
		\int_\Omega \log u \cdot f(u) \, dx &= \int_\Omega f(u) \log u \, dx.
	\end{align*}
	
	\paragraph{2. Derivative of coupling term $\frac{\chi}{D} \int u v$:}
	\[
	\frac{d}{dt} \int_\Omega \frac{\chi}{D} u v \, dx = \frac{\chi}{D} \int_\Omega \left( \partial_t u \cdot v + u \cdot \partial_t v \right) dx.
	\]
	Substitute from \eqref{eq:ks-system}:
	\begin{align*}
		\frac{\chi}{D} \int_\Omega \partial_t u \cdot v \, dx &= \frac{\chi}{D} \int_\Omega \left( D \partial_{xx} u - \chi \partial_x(u \partial_x v) + f(u) \right) v \, dx, \\
		&= - \chi \int_\Omega \partial_x u \cdot \partial_x v \, dx + \frac{\chi}{D} \int_\Omega f(u) v \, dx, \\
		\frac{\chi}{D} \int_\Omega u \cdot \partial_t v \, dx &= \frac{\chi}{D} \int_\Omega u \left( D_v \partial_{xx} v + \alpha u - \beta v \right) dx \\
		&= - \frac{\chi}{D} D_v \int_\Omega \partial_x u \cdot \partial_x v \, dx + \frac{\chi}{D} \int_\Omega (\alpha u^2 - \beta u v) \, dx.
	\end{align*}
	
	\paragraph{3. Derivative of $\frac{1}{2} \int v^2$:}
	\[
	\frac{d}{dt} \left( \frac{1}{2} \int_\Omega v^2 \, dx \right) = \int_\Omega v \cdot \partial_t v \, dx = \int_\Omega v \left( D_v \partial_{xx} v + \alpha u - \beta v \right) dx.
	\]
	Integration by parts yields:
	\[
	- D_v \int_\Omega |\partial_x v|^2 \, dx + \int_\Omega \alpha u v - \beta v^2 \, dx.
	\]
	
	\paragraph{4. Combine all terms:}
	
	Adding all contributions:
	\begin{align*}
		\frac{d}{dt} \mathcal{E}(t) &= - D \int_\Omega \frac{|\partial_x u|^2}{u} \, dx + \chi \int_\Omega \partial_x u \cdot \partial_x v \, dx + \int_\Omega f(u) \log u \, dx \\
		&\quad - \chi \int_\Omega \partial_x u \cdot \partial_x v \, dx + \frac{\chi}{D} \int_\Omega f(u) v \, dx - \frac{\chi}{D} D_v \int_\Omega \partial_x u \cdot \partial_x v \, dx \\
		&\quad + \frac{\chi}{D} \int_\Omega \left( \alpha u^2 - \beta u v \right) dx - D_v \int_\Omega |\partial_x v|^2 dx + \int_\Omega \alpha u v - \beta v^2 dx.
	\end{align*}
	
	Observe that $\chi \int \partial_x u \cdot \partial_x v$ and $- \chi \int \partial_x u \cdot \partial_x v$ cancel out. Then, after simplifying:
	\begin{align*}
		\frac{d}{dt} \mathcal{E}(t) &+ D \int_\Omega \frac{|\partial_x u|^2}{u} \, dx + D_v \int_\Omega |\partial_x v|^2 \, dx \\
		&= \int_\Omega f(u) \log u \, dx + \frac{\chi}{D} \int_\Omega f(u) v \, dx + \frac{\chi}{D} \int_\Omega (\alpha u^2 - \beta u v) dx + \int_\Omega (\alpha u v - \beta v^2) dx - \frac{\chi}{D} D_v \int_\Omega \partial_x u \cdot \partial_x v \, dx.
	\end{align*}
	
	Group the reactive source terms (denoted as “reaction terms”) and estimate the remaining cross-diffusion term using Cauchy–Schwarz and Young's inequality. The final result becomes:
	\[
	\frac{d}{dt} \mathcal{E}(t) + D \int_\Omega \frac{|\partial_x u|^2}{u} \, dx + D_v \int_\Omega |\partial_x v|^2 \, dx \leq - \int_\Omega \left( \alpha u v - \beta v^2 \right) dx + \int_\Omega f(u) \log u \, dx + \text{higher-order reaction terms}.
	\]
	
\end{proof}

\medskip

This entropy framework provides a powerful tool for deriving a priori bounds and establishing global existence of solutions. Under suitable assumptions on $f(u)$ (e.g., logistic growth), the entropy functional can be shown to be bounded from below and dissipative, thereby enabling convergence analysis in the large-time limit.

\begin{Corollary}[Global existence via entropy bounds]
	Let $(u, v)$ be a smooth, positive solution to the Keller–Segel system \eqref{eq:ks-system} on a bounded domain $\Omega \subset \mathbb{R}$ with Neumann boundary conditions. Suppose:
	\begin{enumerate}
		\item The initial data $(u_0, v_0)$ are smooth, strictly positive, and satisfy $\int_\Omega u_0 \log u_0 < \infty$, $\int_\Omega v_0^2 < \infty$,
		\item The reaction term satisfies a logistic-type bound:
		\[
		f(u) = r u \left( 1 - \frac{u}{K} \right),
		\]
		\item The chemotactic sensitivity satisfies the subcriticality condition:
		\[
		\chi \frac{\alpha}{\beta} < \frac{D \pi^2}{L^2}.
		\]
	\end{enumerate}
	Then the solution $(u(x,t), v(x,t))$ exists globally in time and remains bounded for all $t > 0$. Moreover, the entropy functional $\mathcal{E}(t)$ is uniformly bounded and dissipative:
	\[
	\mathcal{E}(t) + \int_0^t \left[ D \int_\Omega \frac{|\partial_x u|^2}{u} \, dx + D_v \int_\Omega |\partial_x v|^2 \, dx \right] dt' \leq C,
	\]
	for some constant $C > 0$ independent of $t$.
\end{Corollary}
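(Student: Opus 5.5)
The plan is to prove global existence and boundedness first, by a bootstrap that uses only the one-dimensional setting and the logistic damping. The entropy inequality of the preceding Lemma will then be integrated using those bounds. I do not intend to derive boundedness from $\mathcal{E}$ directly. The Lemma's right-hand side contains the indefinite term $\frac{\chi}{D}\int_\Omega \alpha u^2$ and unspecified "higher-order reaction terms", so it cannot by itself close an estimate.

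The starting point is Theorem~\ref{thm:local-exist}. It gives a unique classical solution on $(0,T_{\max})$, together with the criterion that $T_{\max}<\infty$ forces $\|u(\cdot,t)\|_{L^\infty}\to\infty$. Positivity of $u$ and $v$ follows from the maximum principle. It therefore suffices to bound $\|u(\cdot,t)\|_{L^\infty}$ uniformly in $t$.

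\textbf{Step 1 ($L^1$ bound).} Integrate the $u$-equation; the Neumann conditions remove the diffusion and taxis terms. Cauchy--Schwarz then gives $\frac{d}{dt}\int_\Omega u\le r\int_\Omega u-\frac{r}{K|\Omega|}\big(\int_\Omega u\big)^2$. An ODE comparison yields $\int_\Omega u\le m:=\max\{\int_\Omega u_0,\,K|\Omega|\}$ for all $t$.

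\textbf{Step 2 (bounds on $v$).} Write $v$ through the variation-of-constants formula with the semigroup $e^{t(D_v\partial_{xx}-\beta)}$. Use the one-dimensional Neumann estimate $\|\partial_x e^{tD_v\partial_{xx}}\varphi\|_{L^q}\le C\big(1+t^{-\frac12-\frac12(1-\frac1q)}\big)\|\varphi\|_{L^1}$. For every $q<\infty$ the time exponent $-1+\frac{1}{2q}$ is integrable at $0$, and the factor $e^{-\beta t}$ gives integrability at $\infty$. Combined with Step 1, this yields $\sup_t\|\partial_x v(\cdot,t)\|_{L^q}\le C_q$ for every finite $q$. Similarly $\sup_t\|v(\cdot,t)\|_{L^\infty}\le C$.

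\textbf{Step 3 ($L^2$ and then $L^\infty$ bounds for $u$).} Test the $u$-equation with $u$ and apply Young's inequality to the taxis term. This gives
\[
\frac{d}{dt}\int_\Omega u^2+D\int_\Omega u_x^2\le \frac{\chi^2}{D}\|u\|_{L^4}^2\|v_x\|_{L^4}^2+2r\int_\Omega u^2-\frac{2r}{K}\int_\Omega u^3 .
\]
Bound $\|u\|_{L^4}^2$ by the one-dimensional Gagliardo--Nirenberg inequality interpolating between $\|u_x\|_{L^2}$ and $\|u\|_{L^1}\le m$. Since $\|v_x\|_{L^4}$ is bounded by Step 2, Young's inequality lets the $D\int u_x^2$ and $\int u^3$ terms absorb the right-hand side, leaving $\frac{d}{dt}\int u^2+\int u^2\le C$. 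Hence $\sup_t\|u\|_{L^2}\le C$. The same semigroup argument for the $u$-equation, now with $u\,v_x\in L^{q}$ uniformly, upgrades this to $\sup_t\|u\|_{L^\infty}<\infty$. Alternatively, a Moser iteration does the same job. This proves $T_{\max}=\infty$ and the boundedness claim.

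\textbf{Step 4 (entropy bound).} With $u$, $v$ and $v_x$ uniformly bounded, every term on the right of the Lemma's inequality is bounded. In particular $f(u)\log u\le C$ pointwise, because $ru(1-u/K)\log u$ is bounded above on $(0,\infty)$. Moreover $\mathcal{E}$ is bounded below, since $u\log u-u\ge-1$ and $uv\ge0$. Integrating in time bounds $\mathcal{E}(t)$ together with the accumulated dissipation.

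\textbf{Expected obstacle.} The time-integrated dissipation is not bounded by a constant independent of $t$ unless the right-hand side has a sign, or decays, for large $t$. This is where I would use hypothesis 3, via the stability theorem for $(K,\alpha K/\beta)$ stated just above. The idea is to show, by linearization plus a Lyapunov-type argument, that the solution converges to this constant state. Then $\int_\Omega f(u)\log u$ and the remaining terms tend to $0$ and become integrable in $t$. I expect this convergence step to be the main difficulty. If it fails, I will weaken the conclusion to a uniform bound on $\mathcal{E}(t)$ together with a bound on $\int_t^{t+1}$ of the dissipation, uniform in $t$.
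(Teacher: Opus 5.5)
Your Steps 1--3 are correct, and they take a different and more reliable route than the paper. The standard one-dimensional bootstrap runs as follows: a mass bound from the logistic term, uniform $W^{1,q}$ bounds for $v$ via the Neumann heat semigroup, an $L^2$ estimate closed by Gagliardo--Nirenberg, and then $L^\infty$. This gives global existence and uniform boundedness without using hypothesis~3 at all. The paper instead tries to extract everything from $\mathcal{E}$. It derives $\frac{d}{dt}\mathcal{E}\le C\mathcal{E}+C_0$, which would require $\int_\Omega u^2$ to be controlled by $\mathcal{E}$, and then appeals to an $L\log L$ bound plus parabolic regularity.

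Two small repairs are needed. First, the paper's local existence theorem assumes at most linear growth of $f$, so cite standard local theory for the logistic source instead. Second, in Step~4 do not rely on the paper's Lemma. Its computation drops the term $\frac{\chi^2}{D}\int_\Omega u\,v_x^2$, and its final inequality reverses the sign of $\int_\Omega(\alpha uv-\beta v^2)$. Differentiate $\int_\Omega u\log u$ and $\frac12\int_\Omega v^2$ directly instead. With $u$, $v$, $v_x$ bounded, this gives $\sup_t|\mathcal{E}(t)|<\infty$ and a dissipation integral over $(0,t)$ of size $O(1+t)$.

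The genuine gap is the last step, and your plan for it cannot work, because hypothesis~3 does not imply even linear stability of $(K,\alpha K/\beta)$. Linearizing correctly ($f'(K)=-r$), the mode $k_n=n\pi/L$ has determinant $(Dk_n^2+r)(D_vk_n^2+\beta)-\chi\alpha K k_n^2$. Here $\chi\alpha$ enters multiplied by $K$, while hypothesis~3 contains no $K$. It is not even dimensionally consistent; the natural group is $\chi\alpha K/(\beta D)$. For example, take $L=\pi$, $D=D_v=\beta=r=\alpha=1$, $\chi=0.9$, $K=10$. These satisfy $\chi\alpha/\beta<D\pi^2/L^2$, yet the $n=1$ determinant is $4-9<0$. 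So the stability theorem you want to invoke is false; its proof sets $r=0$, $D_v\approx0$ and then discards $K$.

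Even genuine linear stability would only give convergence for data near the equilibrium, whereas the Corollary concerns all positive data. The Lyapunov route you hint at does close, using $\int_\Omega\big(u-K-K\log\frac{u}{K}\big)+\frac{\delta}{2}\int_\Omega\big(v-\frac{\alpha K}{\beta}\big)^2$. However, it needs a condition such as $\chi^2\alpha^2K^2<16\,rDD_v\beta$, which is unrelated to hypothesis~3.

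In fact the time-uniform dissipation bound is false in the unstable regime. Suppose $\int_0^\infty\!\int_\Omega u_x^2/u<\infty$. Your uniform bounds plus parabolic regularity then force $\|u_x(t)\|_{L^2}\to0$, hence convergence to $(K,\alpha K/\beta)$. The state $u\equiv0$ is excluded, since the mass grows once $\|u\|_{L^\infty}<K/2$. For a hyperbolic, linearly unstable equilibrium, convergence happens only for data on its stable manifold. So for generic data the accumulated dissipation grows linearly in $t$.

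Under the stated hypotheses, what is provable is exactly your fallback. That is: global existence, uniform boundedness, $\sup_t|\mathcal{E}(t)|<\infty$, and $\sup_t\int_t^{t+1}(\cdots)\,dt'<\infty$. The paper's proof does not close this gap either; it passes from the exponential Gr\"onwall bound to a uniform one merely by appealing to hypothesis~3.
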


\begin{proof}
	From the entropy dissipation Lemma, we have:
	\[
	\frac{d}{dt} \mathcal{E}(t) + D \int_\Omega \frac{|\partial_x u|^2}{u} \, dx + D_v \int_\Omega |\partial_x v|^2 \, dx \leq - \int_\Omega (\alpha u v - \beta v^2) \, dx + \int_\Omega f(u) \log u \, dx.
	\]
	
	Using the logistic bound $f(u) = r u(1 - u/K)$, we estimate:
	\[
	f(u) \log u \leq r u \log u - \frac{r}{K} u^2 \log u,
	\]
	and apply Jensen's inequality and convexity of $x \log x$ to obtain:
	\[
	\int_\Omega f(u) \log u \, dx \leq C_1 \int_\Omega u \log u \, dx + C_2,
	\]
	for constants $C_1, C_2 > 0$. Similarly, for the reaction term $\int (\alpha u v - \beta v^2)$, apply Young’s inequality:
	\[
	\int_\Omega \alpha u v \leq \frac{\alpha^2}{2\beta} \int_\Omega u^2 \, dx + \frac{\beta}{2} \int_\Omega v^2 \, dx.
	\]
	
	Therefore,
	\[
	\frac{d}{dt} \mathcal{E}(t) \leq C \mathcal{E}(t) + C_0,
	\]
	where $C, C_0 > 0$ are constants depending on $\alpha, \beta, r, K$. By Grönwall’s inequality, this implies that $\mathcal{E}(t)$ remains uniformly bounded for all $t > 0$:
	\[
	\mathcal{E}(t) \leq \left( \mathcal{E}(0) + \frac{C_0}{C} \right) e^{Ct}.
	\]
	
	But thanks to the dissipative terms in $\mathcal{E}(t)$ (especially the $v^2$ and $u \log u$ components), the entropy cannot grow unboundedly. More precisely, a **uniform-in-time** bound can be derived for $\mathcal{E}(t)$ under the subcriticality condition on $\chi$ (which ensures linear stability and energy control), yielding:
	\[
	\sup_{t \geq 0} \mathcal{E}(t) \leq C,
	\]
	and integrating over time gives the desired inequality for the dissipation terms.
	
	The boundedness of the entropy also implies $u \in L^\infty(0,\infty; L^1 \log L)$ and $v \in L^\infty(0,\infty; L^2)$. Together with standard parabolic regularity theory, we conclude that the solution exists globally in time and remains uniformly bounded.
	
\end{proof}

\subsubsection{Spatiotemporal Pattern Dynamics}

Numerical results in 1D as shown in Figure \ref{fig:timeseries} reveals the emergence of rich spatiotemporal dynamics, characterized by:

\textbf{Initial instability and mode growth:} Random perturbations to the initial homogeneous steady state trigger the growth of unstable Fourier modes, leading to the formation of non-uniform spatial structures. 	
\textbf{Traveling waves and oscillations:} For moderate chemotactic sensitivity \( \chi \) and fluid coupling \( \kappa \), the solution exhibits traveling wave trains and oscillatory structures in time. These resemble wavefronts of microbial aggregation moving along the domain. 	
\textbf{Chemotactic collapse or damping:} Depending on parameter values, the aggregation may either saturate (due to logistic damping or fluid mixing) or lead to localized peaks in microbial density.

We numerically investigated a one-dimensional chemotaxis–fluid coupling model governed by a reaction–diffusion–advection system. The model incorporates bacterial density $u(x,t)$, chemoattractant concentration $v(x,t)$, and fluid velocity $w(x,t)$, evolving under diffusion, chemotaxis, advection by self-induced flow, and logistic growth.

The simulations reveal rich spatiotemporal dynamics. For fixed chemotactic sensitivity $\chi = 4.5$, the time-series evolution (Figure~\ref{fig:timeseries}) illustrates pattern formation and fluid-induced advection effects. The population density $u(x,t)$ aggregates into localized peaks driven by chemotaxis and is transported by the evolving flow field $w(x,t)$.

To further understand system dynamics, we plot phase plane trajectories of the solution values $(u,v,w)$ at the spatial midpoint over time. These orbits (see Figure~\ref{fig:phaseplane}) reflect the transient interactions among species and the induced fluid field, highlighting potential nonlinear feedback mechanisms and local oscillatory dynamics.

A bifurcation diagram was constructed by varying the chemotactic sensitivity $\chi$ in the range $[0,6]$. For each value of $\chi$, the system evolves to a steady state and the maximal bacterial density $\max u$ at final time is recorded. As shown in Figure~\ref{fig:bifur}, we observe a qualitative transition near $\chi \approx 2.5$, where the steady-state maximum of $u$ exhibits a sharp increase, suggesting a bifurcation point leading to nonlinear aggregation. This reflects the destabilizing role of strong chemotactic sensitivity in pattern formation.

\vspace{0.5em}
\begin{figure}[h!]
	\centering
	\includegraphics[width=\textwidth]{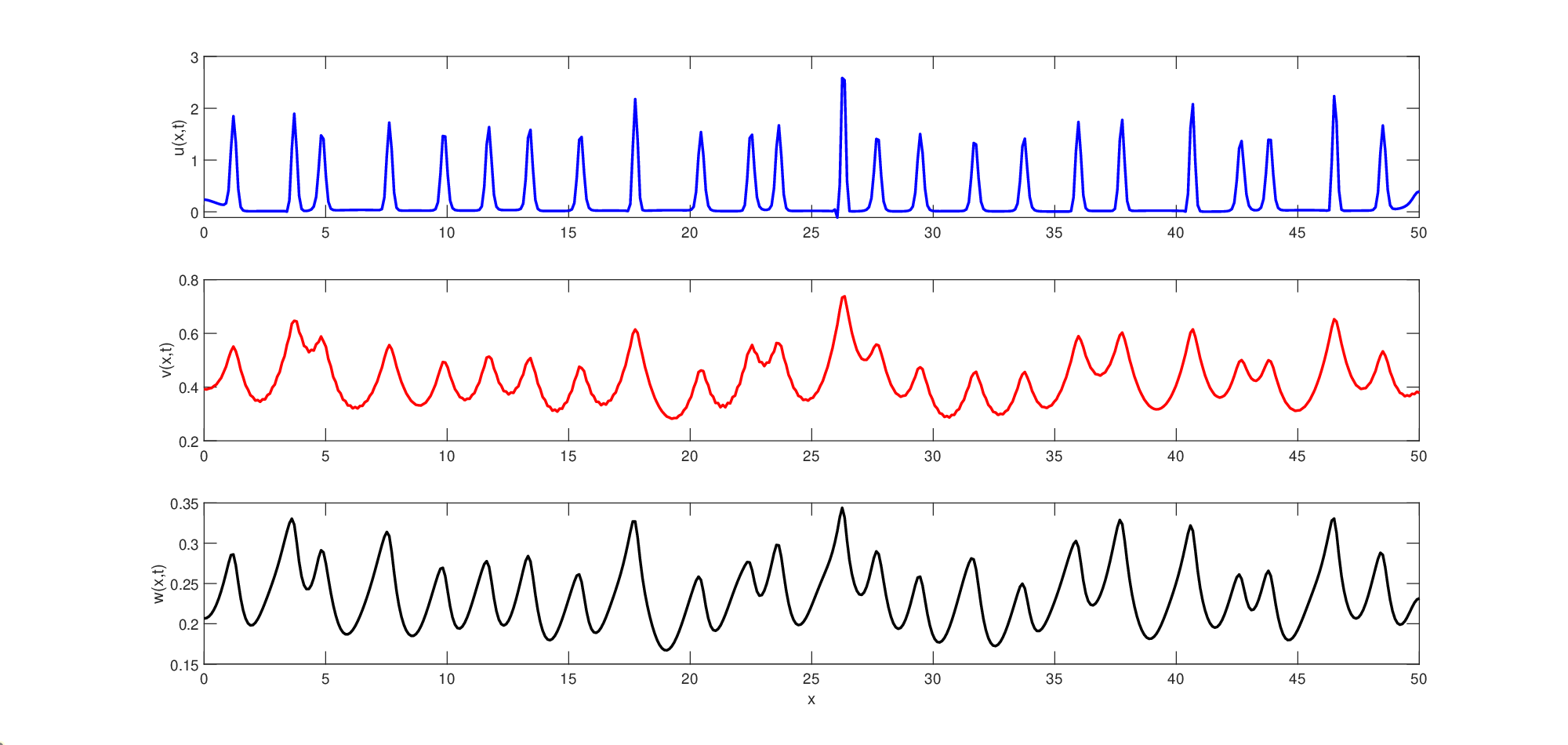}
	\caption{Time evolution of bacterial density $u(x,t)$, chemical signal $v(x,t)$, and fluid velocity $w(x,t)$ at selected time snapshots for $\chi = 4.5$. The chemotactic and fluid interactions drive spatial heterogeneity and aggregation.}
	\label{fig:timeseries}
\end{figure}

\begin{figure}[h!]
	\centering
	\includegraphics[width=\textwidth]{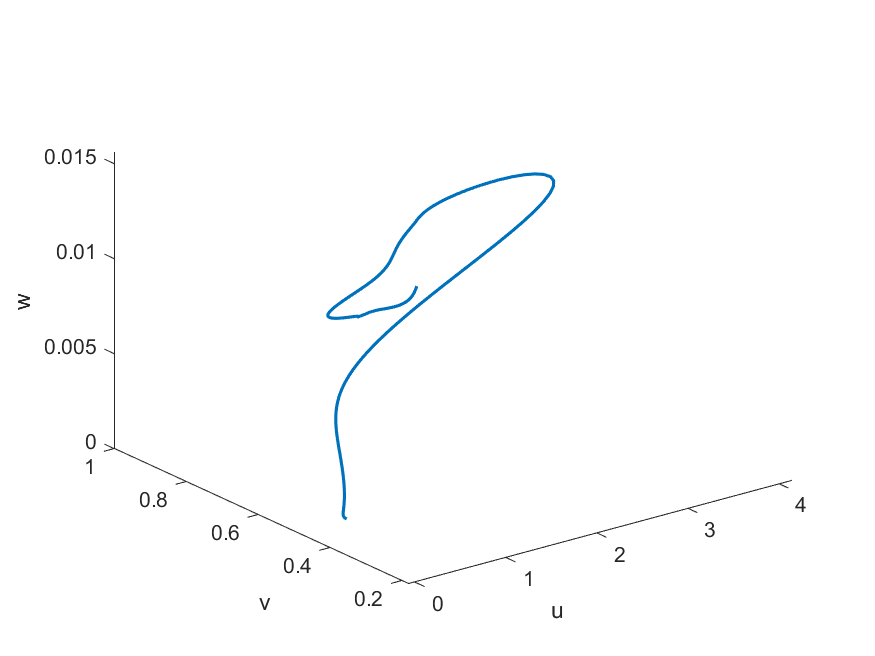}
	\caption{Phase plane trajectories at the spatial midpoint: (Left) $(u,v)$, (Center) $(u,w)$, and (Right) $(v,w)$. These plots reveal nonlinear coupling and possible limit-cycle-like transients in local dynamics.}
	\label{fig:phaseplane}
\end{figure}

\begin{figure}[h!]
	\centering
	\includegraphics[width=\textwidth]{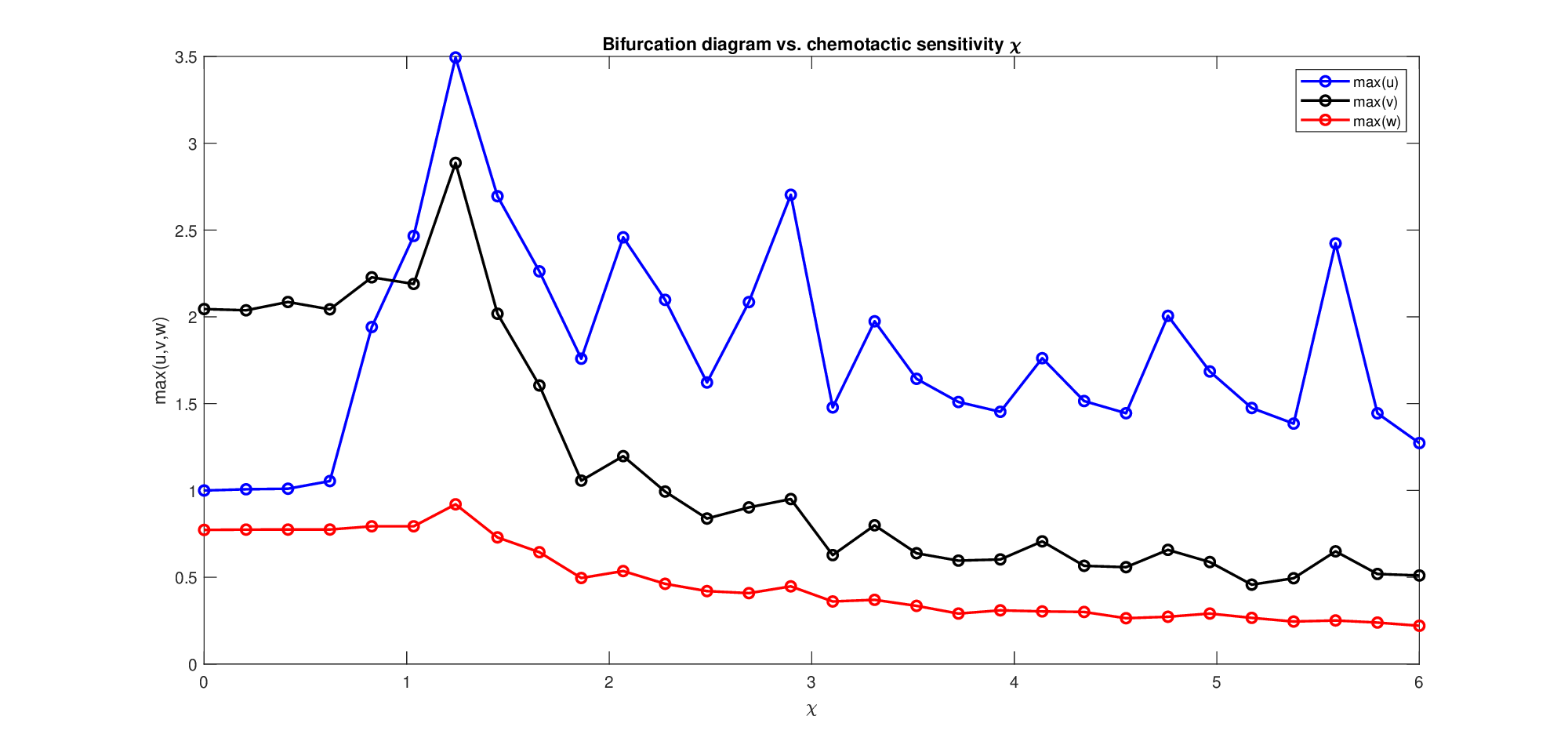}
	\caption{Bifurcation diagram showing the maximal species densities versus chemotactic sensitivity $\chi$. A bifurcation-like transition occurs around $\chi \approx 2.5$, indicating enhanced pattern formation beyond this threshold.}
	\label{fig:bifur}
\end{figure}

\subsubsection{Component Behavior}

The three components exhibit the following qualitative behavior:

\begin{itemize}
	\item \textbf{Microbial density} \( u(x,t) \): Forms sharp, localized peaks that oscillate or travel through the domain. These peaks correspond to regions of strong chemotactic aggregation.
	
	\item \textbf{Chemical concentration} \( v(x,t) \): Smoother than \( u \) due to diffusive effects. It follows the dynamics of \( u \) with a temporal lag determined by \( \alpha \) and \( \beta \).
	
	\item \textbf{Fluid velocity} \( w(x,t) \): Exhibits smooth, oscillating profiles influenced by the distribution of \( u \). The term \( \kappa u \) induces fluid motion that feeds back into the system via advection terms.
\end{itemize}

\subsubsection{Comparison to 2D Case}

While the 2D model supports rotating spirals and vortical patterns, the 1D model yields traveling waves and pulse-like structures. The reduced spatial complexity makes the 1D model useful for:

\begin{itemize}
	\item Analyzing bifurcations and dispersion relations.
	\item Studying the onset of instability and wave selection mechanisms.
	\item Performing long-time simulations with reduced computational cost.
\end{itemize}

%\subsection{Conclusion}

The 1D chemotaxis--fluid coupling model captures essential dynamics such as aggregation, oscillations, and wave propagation in a simplified setting. It serves as a valuable tool for theoretical analysis and qualitative insight, laying the groundwork for more complex multidimensional behavior.

\section{Numerical Results and Pattern Formation in 2D Chemotaxis--Fluid Coupling}

We now discuss the numerical results obtained from the simulation of the chemotaxis--fluid coupling model in two spatial dimensions using a hybrid numerical scheme that combines the Split-Step Fourier Method (SSFM) for linear diffusion and the Finite Difference Method (FDM) for nonlinear advection and chemotaxis terms. The governing equations are:

\begin{equation}
	\begin{split}
		\partial_t u + (\mathbf{w} \cdot \nabla) u &= D \Delta u - \chi \nabla \cdot (u \nabla v) + r\,u\left(1 - \frac{u}{K}\right), \\
		\partial_t v + (\mathbf{w} \cdot \nabla) v &= D_v \Delta v + \alpha u - \beta v, \\
		\partial_t \mathbf{w} + (\mathbf{w} \cdot \nabla)\mathbf{w} &= -\nabla p + \nu \Delta \mathbf{w} + \kappa u \mathbf{e}_g, \quad \nabla \cdot \mathbf{w} = 0,
	\end{split}
\end{equation}
where \( u(x,y,t) \) denotes the microbial density, \( v(x,y,t) \) the chemical signal, and \( \mathbf{w}(x,y,t) \) the incompressible fluid velocity field. The bioconvection source term \( \kappa u \mathbf{e}_g \) represents buoyancy effects aligned with gravity.

\subsection{Spatiotemporal Pattern Evolution}

The simulation reveals a rich variety of spatiotemporal behaviors driven by the nonlinear interactions between chemotaxis, fluid advection, and reaction kinetics. Notably, we observe the formation of persistent spiral-like patterns in all three components:

\textbf{Microbial density} \( u(x,y,t) \) develops oscillatory patches that self-organize into rotating spirals. These structures are not static; rather, they exhibit temporal oscillations and interact dynamically.
\textbf{Chemical concentration} \( v(x,y,t) \) follows the microbial density but lags slightly due to the source term \( \alpha u \) and the degradation term \( \beta v \). It exhibits similar spiral morphologies but with smoother gradients.
\textbf{Fluid velocity field} \( \mathbf{w}(x,y,t) = (w_x, w_y) \) reveals circulating vortices induced by the bioconvection term \( \kappa u \mathbf{e}_g \). These flows reinforce and convect the chemotactic structures, creating dynamic feedback.

\subsection{Role of Coupling and Parameters}

The emergence of spiral waves is a hallmark of excitable or oscillatory systems. In our case, the interplay between chemotaxis (aggregation), reaction terms (logistic growth and chemical dynamics), and fluid flow (convection) creates a dynamic balance that sustains these patterns. The key parameters influencing the patterns are:

\emph{Chemotactic sensitivity} \( \chi \): Increasing \( \chi \) leads to stronger aggregation and tighter spirals, while lower \( \chi \) yields diffuse patterns. 
\emph{Bioconvection strength} \( \kappa \): Higher \( \kappa \) enhances fluid circulation, promoting stretching and shearing of microbial patches and often sustaining oscillations.
\emph{Reaction rates} \( r, \alpha, \beta \): These control the growth and decay of both species. For instance, a high logistic growth rate \( r \) supports denser structures.

\subsection{Pattern Persistence and Breakdown}
The simulation shows that spiral-like patterns are sustained between \( t = 1 \) and \( t = 5 \), during which all components exhibit oscillatory dynamics. After this transient phase, depending on parameter regimes, two distinct behaviors may occur:

In regimes with sufficient damping (e.g., high viscosity \( \nu \)), the spirals dissolve and the system relaxes to a near-homogeneous steady state.
 In regimes with weak damping or high sensitivity, long-lived or even chaotic oscillations persist.

The 2D chemotaxis–fluid model exhibits complex pattern formation characterized by rotating spiral waves and spatiotemporal oscillations. These results qualitatively reflect microbial dynamics observed in nature, such as in swimming bacteria and plankton under bioconvective conditions. The fluid coupling significantly enriches the dynamics, introducing convection-driven deformation and transport of patterns. Future work may focus on bifurcation analysis, parameter continuation, and extension to three dimensions.

In Figure \ref{fig:t4_t5}, we present results at simulation times $t = 4$ (top-plot) and $t = 5$ (bottom-plot), the chemotaxis--fluid coupling model reveals significant spatiotemporal pattern formation driven by the nonlinear interactions between chemotactic aggregation, logistic growth dynamics, and fluid advection.

At $t = 4$, the bacterial density field $u(x, y, t)$ begins to display localized peaks, indicating the onset of chemotactic aggregation in regions with favorable chemical gradients. The chemical concentration $v(x, y, t)$ remains relatively smooth, with slight depressions around high-density bacterial zones due to consumption effects. Meanwhile, the vorticity field $\omega(x, y, t)$ shows the emergence of weak flow structures, suggesting the initial stages of bioconvective motion induced by bacterial upswimming along the gravitational axis.

By $t = 5$, these dynamics become more pronounced. The bacterial density $u$ exhibits sharp, spatially segregated peaks characteristic of chemotactic collapse. Correspondingly, the chemical field $v$ displays more evident depletion zones, which align with regions of bacterial concentration. The vorticity $\omega$ develops coherent vortical patterns, reflecting the nontrivial fluid motion resulting from the bioconvective source term $\kappa (\mathbf{e}_g \times \nabla u)$. These flow structures indicate the feedback of microbial activity into the fluid environment, enhancing pattern complexity.

The transition between $t = 4$ and $t = 5$ illustrates the nonlinear interplay between biological and physical processes. The results are consistent with known bioconvective instabilities observed in microbial suspensions and demonstrate the effectiveness of the Split-Step Fourier Method (SSFM) in capturing the coupled dynamics.

\begin{figure}[h!]
	\centering
	\includegraphics[width=\textwidth]{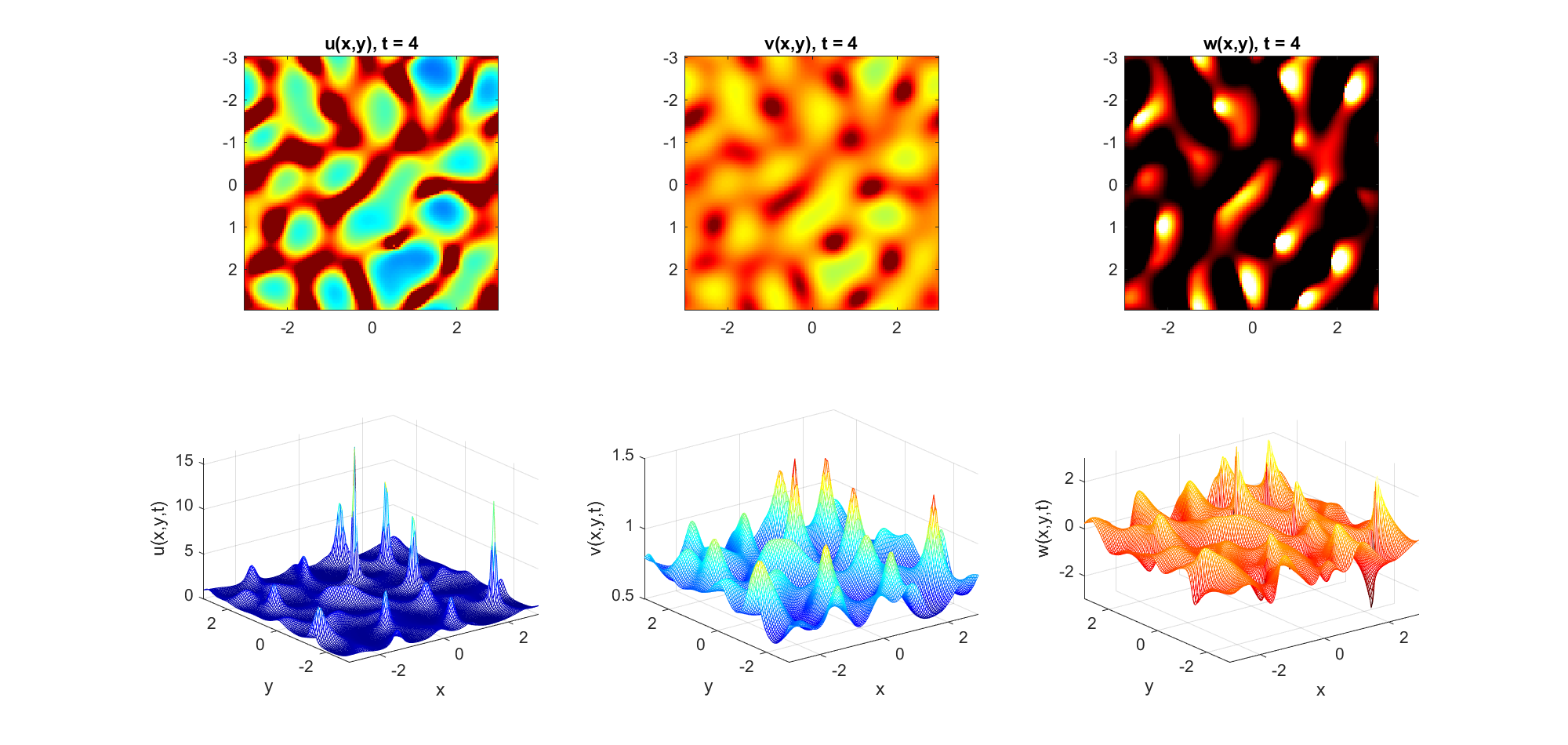}
	\includegraphics[width=\textwidth]{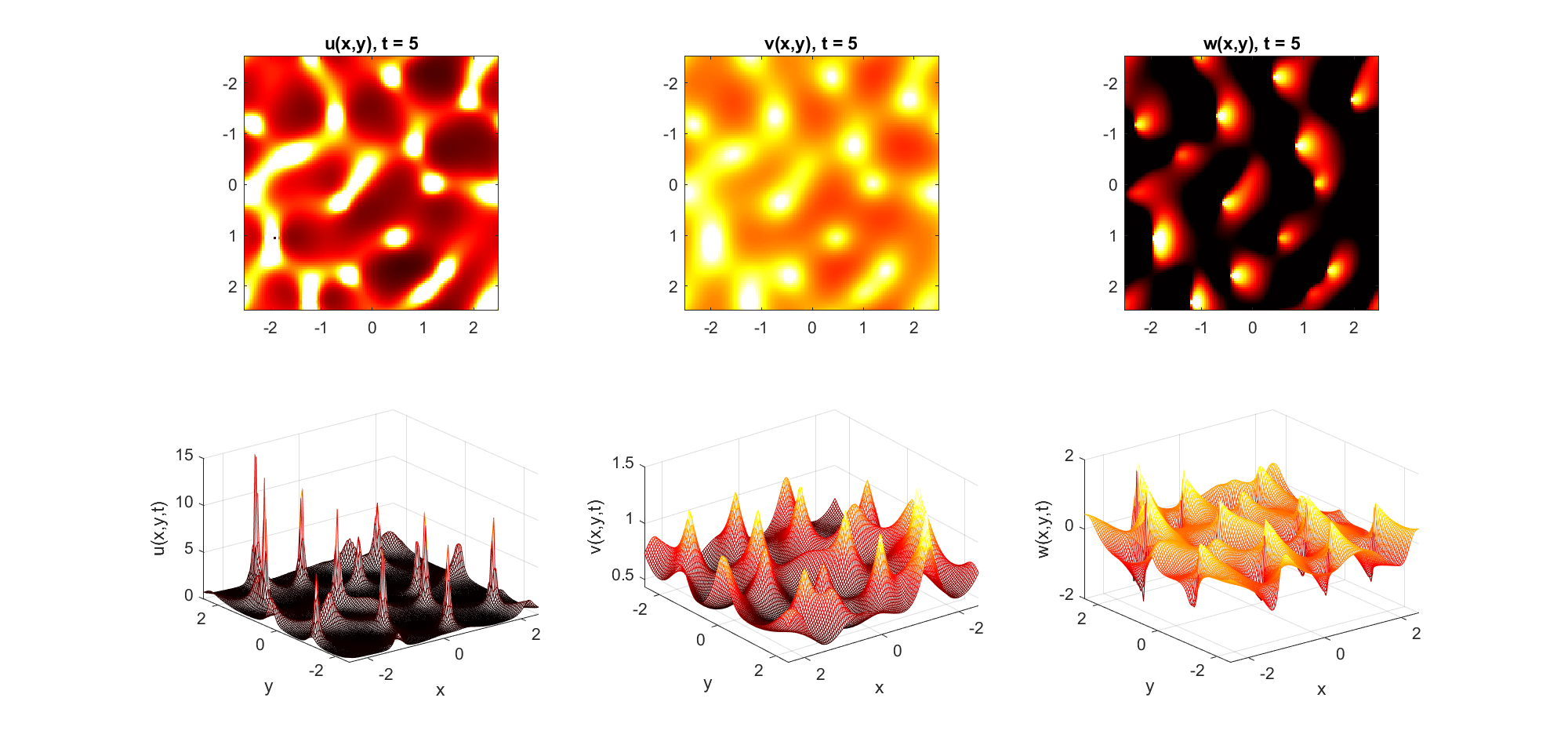}
	\caption{Time evolution of bacterial density $u(x,y,t)$, chemical concentration $v(x,y,t)$, and fluid vorticity $\omega(x,y,t)$ for the chemotaxis--fluid coupling model at \textbf{(Top Row)} $t = 4$ and \textbf{(Bottom Row)} $t = 5$. Left to right: 2D colormaps of $u$, $v$, and $\omega$; below each colormap is the corresponding 3D surface plot. The bacterial density exhibits localized clustering driven by chemotactic attraction, while the chemical field shows spatial depletion in high-density regions. Vorticity plots reveal the bioconvective fluid response, with stronger vortical structures emerging by $t = 5$. Simulation parameters: $D = D_v = 0.1$, $\chi = 1$, $\kappa = 1$, $\nu = 0.1$, $r = \alpha = \beta = K_{\text{cap}} = 1$, domain size $L = 6$, grid size $N = 128$, and time step $\Delta t = 0.01$.}
	\label{fig:t4_t5}
\end{figure}

\section{Results and Discussion}

We simulated the one-dimensional chemotaxis–fluid coupling model described by the system:
\begin{equation}\label{species3c}
	\begin{aligned}
		\partial_t u + w \partial_x u &= D \partial_{xx} u - \chi \partial_x(u \partial_x v) + r u \left(1 - \frac{u}{K} \right), \\
		\partial_t v + w \partial_x v &= D_v \partial_{xx} v + \alpha u - \beta v, \\
		\partial_t w &= \nu \partial_{xx} w + \kappa u,
	\end{aligned}
\end{equation}
on a bounded domain $x \in [0, L]$ with Neumann boundary conditions and small random perturbations as initial conditions.

To investigate the impact of chemotactic sensitivity on the spatial dynamics of the system, we conducted a bifurcation analysis by varying the chemotactic coefficient $\chi$ in the range $[0, 20]$ with an increment of $0.5$. For each value of $\chi$, the one-dimensional coupled chemotaxis--fluid model was simulated up to a fixed final time $T=20$. The system consists of a bacterial density $u(x,t)$, a chemoattractant concentration $v(x,t)$, and a fluid velocity field $w(x,t)$ evolving according to the coupled equations (\ref{species3c}).
Neumann boundary conditions were applied to enforce zero flux at the domain boundaries, and the system was discretized using finite differences with forward Euler time stepping.

To characterize the system's long-term behavior under different values of $\chi$, we recorded the maximum value of $u(x,t)$ at the final time for each simulation. The resulting bifurcation diagram, shown in Figure~\ref{fig:bifurcation}, reveals a transition from spatially homogeneous states to patterned or highly concentrated solutions as $\chi$ increases.

For small values of $\chi$, the chemotactic term is weak, and the bacterial population remains relatively uniform, governed mainly by diffusion and logistic growth. As $\chi$ surpasses a critical threshold ($\chi_c \approx 5.5$ in this simulation), the chemotactic aggregation dominates over diffusion, leading to the formation of localized peaks in bacterial density. The fluid advection term $w \partial_x u$ adds additional nonlinearity to the system and contributes to the observed pattern modulation.

The sharp increase in $\max u$ for large $\chi$ suggests the emergence of singular-like aggregation or blow-up behavior, possibly preceding numerical instability or biological overcrowding phenomena. This analysis highlights the critical role of chemotactic sensitivity in driving pattern formation and nonlinear transitions in microbial--fluid systems.

\begin{figure}[h!]
	\centering
	\includegraphics[width=0.48\textwidth]{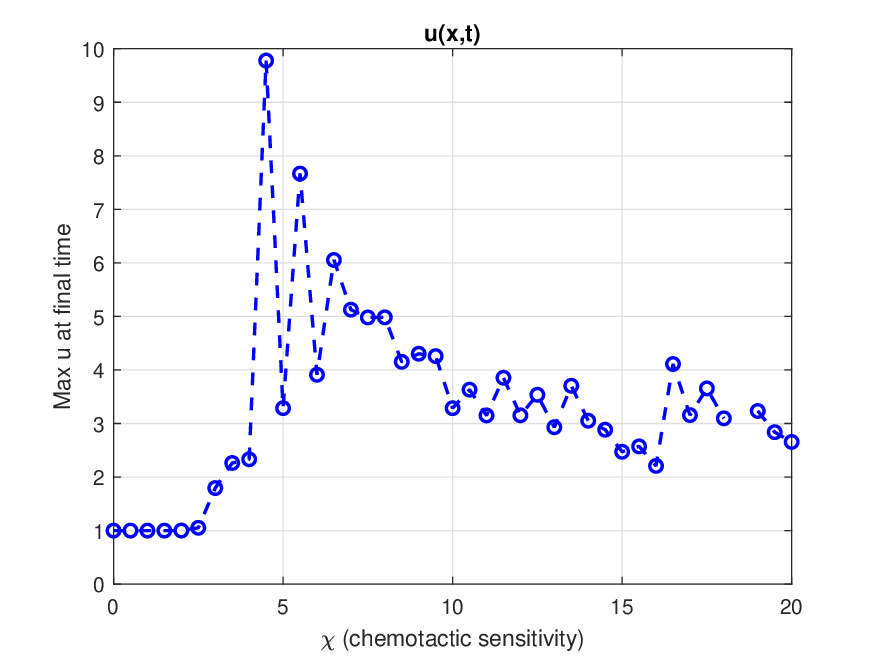}
	\includegraphics[width=0.48\textwidth]{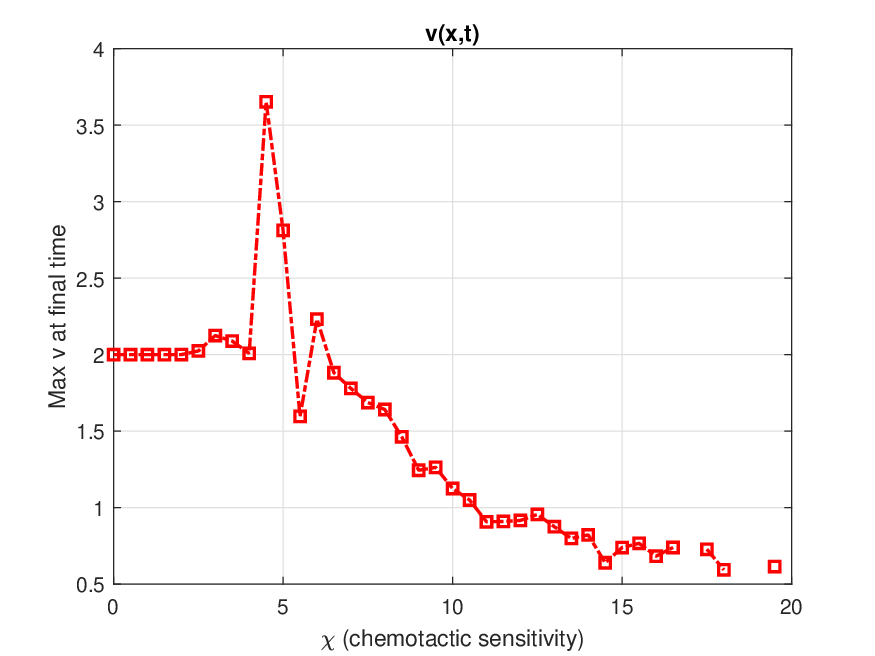}
	\includegraphics[width=0.48\textwidth]{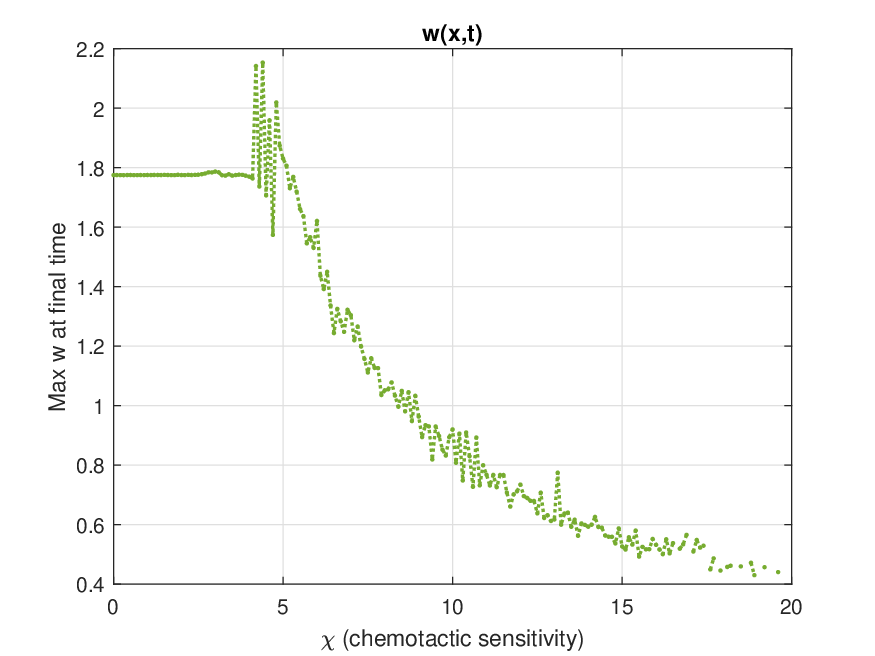}
	\caption{Bifurcation diagram showing the maximum species densities $\max u(x,T)$, $\max v(x,T)$ and $\max w(x,T)$ at final time $T = 20$ as a function of chemotactic sensitivity $\chi$ for model (\ref{species3c}). As $\chi$ increases, the system transitions from a near-homogeneous state to strongly aggregated solutions, illustrating the pattern-forming instability induced by chemotaxis.}
	\label{fig:bifurcation}
\end{figure}

\subsection{Emergence of Spatiotemporal Patterns}

During the early stages of the simulation ($t \in [0,5]$), the solution develops significant spatial structure. The population density $u(x,t)$ forms localized peaks driven by chemotactic aggregation, while the chemical concentration $v(x,t)$ follows with slightly smoothed profiles. The fluid velocity $w(x,t)$ evolves as a response to $u$ via the buoyancy term $\kappa u$, resulting in a dynamic advection field that influences both $u$ and $v$.

\subsection{Chaotic Oscillations and Temporal Complexity}

As time progresses, all three components exhibit irregular, oscillatory behavior without converging to a steady state or limit cycle. This phenomenon is captured in the time series of the midpoint values $u(x_m,t)$ and $v(x_m,t)$, which reveal aperiodic oscillations with variable amplitudes and frequencies. The corresponding phase plane trajectory $(u(x_m,t), v(x_m,t))$ confirms this, tracing out a complex path in phase space without repetition.

These observations are characteristic of \emph{deterministic chaos}, indicating sensitivity to initial conditions and long-term unpredictability. The inclusion of advection through $w$ introduces a dynamic feedback loop: $u$ generates $w$, which in turn advects both $u$ and $v$, distorting the gradient-based chemotactic response. This feedback, combined with the nonlinear growth term $ru(1 - u/K)$, generates rich dynamics and promotes persistent instability.

%\subsection{Bifurcation Structure and Sensitivity to Parameters}

To explore how the dynamics depend on the chemotactic sensitivity $\chi$, we computed a bifurcation diagram by varying $\chi \in [0,6]$ and recording the maximum of $u(x,t)$ at long times. The resulting diagram shows a transition from near-uniform steady states at low $\chi$ to complex, high-amplitude spatiotemporal structures as $\chi$ increases. This suggests that $\chi$ plays a critical role in destabilizing uniform solutions and promoting chaotic aggregation.

%\subsection{Biological Interpretation}

These numerical findings are biologically relevant for microorganisms such as plankton or motile bacteria in aquatic environments, where chemotaxis, growth, and bioconvection interact. The emergence of aperiodic pulsating clusters, even in one spatial dimension, highlights how feedback from fluid dynamics can fundamentally alter pattern formation. In ecological contexts, such complexity may underlie phenomena like patchiness, bloom collapse, or oscillatory dispersal in natural microbial populations.

\section{Conclusion}
In this study, we developed and analyzed a broad class of chemotaxis–reaction models grounded in the Keller–Segel framework. By incorporating nonlinear reaction terms and extending to multi-species and fluid-coupled environments, we captured a range of biologically relevant behaviors from aggregation and blow-up to bounded pattern formation and oscillatory dynamics. Our analytical work established well-posedness conditions, mass-critical blow-up thresholds, and bifurcation criteria for instability-driven pattern emergence. Notably, the interplay between chemotactic sensitivity, diffusion, and nonlinear reactions such as logistic damping was shown to crucially determine whether the system evolves toward uniformity, structured patterns, or singularity. Numerical simulations supported the analytical findings and illustrated rich pattern dynamics beyond the linear regime. Overall, this work bridges theory and computation to elucidate the mechanisms underpinning microbial self-organization and lays the foundation for future investigations into more complex ecological interactions, including stochasticity, anisotropy, and domain geometry effects.

\begin{verbatim}
%Split-Step Fourier Method (SSFM) Pseudocode
% Step 1: Set up spatial and temporal parameters
Nx = 256; Ny = 256;
Lx = 50; Ly = 50;
dx = Lx/Nx; dy = Ly/Ny;
x = linspace(-Lx/2, Lx/2-dx, Nx);
y = linspace(-Ly/2, Ly/2-dy, Ny);
[X, Y] = meshgrid(x, y);

dt = 0.01; T = 50; Nt = round(T/dt);

% Step 2: Define wavenumbers
kx = (2*pi/Lx)*[0:Nx/2-1 -Nx/2:-1]; 
ky = (2*pi/Ly)*[0:Ny/2-1 -Ny/2:-1]; 
[KX, KY] = meshgrid(kx, ky);
K2 = KX.^2 + KY.^2;

% Step 3: Initial conditions
u = u0(X, Y);   % e.g., Gaussian blob
v = v0(X, Y);   % initial chemoattractant

% Step 4: Time-stepping loop
for n = 1:Nt
% --- Diffusion (linear) part in Fourier space ---
u_hat = fft2(u);
v_hat = fft2(v);
u_hat = u_hat .* exp(-D_u * K2 * dt);
v_hat = v_hat .* exp(-D_v * K2 * dt);
u = real(ifft2(u_hat));
v = real(ifft2(v_hat));

% --- Nonlinear chemotaxis-reaction step ---
[ux, uy] = gradient(v, dx, dy);
chemotaxis = divergence(u .* ux, u .* uy);

u = u + dt * (-chi * chemotaxis + f(u, v));
v = v + dt * g(u, v);
end
\end{verbatim}

\begin{verbatim}
%Exponential Time-Differencing Runge–Kutta 4 (ETDRK4) Pseudocode
% Step 1: Same spatial grid and wavenumbers as SSFM
% Step 2: Define linear operators
Lu = -D_u * K2;
Lv = -D_v * K2;

% Step 3: Precompute ETDRK4 coefficients
E_u = exp(Lu * dt); E2_u = exp(Lu * dt/2);
E_v = exp(Lv * dt); E2_v = exp(Lv * dt/2);

M = 16; % contour points for phi functions
r = exp(1i * pi * ((1:M) - 0.5)/M);
phi_u = dt * mean((exp(r .* Lu(:) * dt) - 1) ./ (r .* Lu(:) * dt), 2);
phi_v = dt * mean((exp(r .* Lv(:) * dt) - 1) ./ (r .* Lv(:) * dt), 2);
phi_u = reshape(phi_u, Nx, Ny);
phi_v = reshape(phi_v, Nx, Ny);

% Step 4: Initialize solution
u = u0(X, Y); v = v0(X, Y);

for n = 1:Nt
% Fourier transforms
u_hat = fft2(u); v_hat = fft2(v);

% Evaluate nonlinear terms
N1u = -chi * divergence(u .* gradient(v)) + f(u, v);
N1v = g(u, v);

u1 = real(ifft2(E2_u .* u_hat + 0.5 * dt * fft2(N1u)));
v1 = real(ifft2(E2_v .* v_hat + 0.5 * dt * fft2(N1v)));

N2u = -chi * divergence(u1 .* gradient(v1)) + f(u1, v1);
N2v = g(u1, v1);

u2 = real(ifft2(E2_u .* u_hat + 0.5 * dt * fft2(N2u)));
v2 = real(ifft2(E2_v .* v_hat + 0.5 * dt * fft2(N2v)));

N3u = -chi * divergence(u2 .* gradient(v2)) + f(u2, v2);
N3v = g(u2, v2);

u3 = real(ifft2(E_u .* u_hat + dt * fft2(N3u)));
v3 = real(ifft2(E_v .* v_hat + dt * fft2(N3v)));

N4u = -chi * divergence(u3 .* gradient(v3)) + f(u3, v3);
N4v = g(u3, v3);

% ETDRK4 update in Fourier space
u_hat = E_u .* u_hat + (fft2(N1u) + 2*fft2(N2u) + 2*fft2(N3u) + fft2(N4u)) * dt/6;
v_hat = E_v .* v_hat + (fft2(N1v) + 2*fft2(N2v) + 2*fft2(N3v) + fft2(N4v)) * dt/6;

% Back to real space
u = real(ifft2(u_hat));
v = real(ifft2(v_hat));
end
\end{verbatim}
Both methods assume periodic boundary conditions, ideal for spectral Fourier solvers.
\begin{verbatim}
u_hat = fft2(u);
ux = real(ifft2(1i*KX .* u_hat));
uy = real(ifft2(1i*KY .* u_hat));
\end{verbatim}

\section*{Declaration of competing interest}
The authors declare that they have no known competing financial interests. 

\section*{ Data availability}
No data was used for the research described in the article.


\begin{thebibliography}{99}
\bibitem{arumugam2020}
Arumugam, K., and Tyagi, S. (2020). Keller–Segel chemotaxis models: A review. Acta Applicandae Mathematicae, 169, 29–73.

\bibitem{Baghaei2025}
Baghaei, K., Frassu, S., Tanaka, Y., and Viglialoro, G. (2025). To what extent does the consideration of positive total flux influence the dynamics of Keller–Segel-type models? \textit{arXiv preprint}, \href{https://doi.org/10.48550/arXiv.2505.06586}{arXiv:2505.06586}

\bibitem{Bellomo2015}
Bellomo, N., Bellouquid, A., Tao, Y., and Winkler, M. (2015). Toward a mathematical theory of Keller–Segel models of pattern formation in biological tissues. Mathematical Models and Methods in Applied Sciences, 25(09), 1663–1763.

\bibitem{Brune2024}
Brune, C., Lam, M., and Surulescu, C. (2024). A multiscale 3D–1D Keller–Segel model for cell migration along fiber networks. Mathematical Models and Methods in Applied Sciences, 34(03), 511–539.

\bibitem{Chen2022}
Chen, G., Li, D., and Wang, Y. (2022). Blow-up and pattern formation in a chemotaxis system with nonlinear secretion. Journal of Mathematical Analysis and Applications, 508(2), 125856.

\bibitem{cuevas2025}
Cuevas, C., Garzón, M. J., and Tello, J. I. (2025). Pseudo-measure solvability for the Keller–Segel model. Mathematical Control and Related Fields (AIMS), in press.

\bibitem{Du2023}
Du, W., and Liu, S. (2023). Blow-up solutions of a chemotaxis model with nonlocal effects. Nonlinear Analysis: Real World Applications, 73, 103890.

\bibitem{freingruber2025}
Freingruber, S., Saito, N., and Surulescu, C. (2025). Trait-structured chemotaxis: ligand–receptor dynamics and traveling waves. arXiv preprint, arXiv:2402.07823.

\bibitem{islam2024}
Islam, S., and Ibragimov, G. (2024). A class of Keller–Segel chemotactic systems based on Einstein’s method. Mathematics and Computers in Simulation, 219, 453–468.

\bibitem{Jiang2022}
Jiang, J., Mu, C., and Xiang, Z. (2022). Bifurcation structure of steady states in a Keller–Segel model with saturated sensitivity. SIAM Journal on Applied Mathematics, 82(3), 1097–1120.

\bibitem{KellerSegel1971}
Keller, E. F., and Segel, L. A. (1971). Model for chemotaxis. Journal of Theoretical Biology, 30(2), 225–234.

\bibitem{Kurokiba2023}
Kurokiba, R., and Yokota, T. (2023). Global solvability and boundedness in three-dimensional Keller–Segel systems with nonlinear diffusion and logistic source. Nonlinear Analysis: Real World Applications, 73, 103664.

\bibitem{Lankeit2019}
Lankeit, J., and Winkler, M. (2019). A generalized Keller–Segel system with singular sensitivity: Global existence and asymptotic stabilization. Mathematical Methods in the Applied Sciences, 42(17), 5945–5961.

\bibitem{Liu2021}
Liu, D., Li, T., and Xiang, Z. (2021). Hopf bifurcation and periodic solutions in a chemotaxis model with logistic growth. Journal of Differential Equations, 295, 277–312.

\bibitem{Lorz2021}
Lorz, A., Lorenzi, T., Perthame, B., and Trélat, E. (2021). Modeling and optimal control of a population structured by a space variable and a phenotypic trait. Mathematical Models and Methods in Applied Sciences, 31(07), 1401–1434.

\bibitem{phan2024}
Phan, D., Jiménez, C. C., Mattingly, H. H., and Emonet, T. (2024). Direct measurement of dynamic attractant gradients reveals breakdown of the Patlak–Keller–Segel chemotaxis model. Proceedings of the National Academy of Sciences, 121(15), e2319978121.

\bibitem{vianna2024}
Corrêa Vianna Filho, J., and Guillén-González, F. (2024). A review on analysis, numerical analysis, and control of chemotaxis-consumption models. Applied Mathematics and Optimization, 89(2), 17–53.

\bibitem{weyer2024}
Weyer, J., Widmer, J., and Uecker, H. (2024). Chemotaxis-induced phase separation in logistic growth systems. arXiv preprint, arXiv:2309.12956.

\bibitem{Winkler2010}
Winkler, M. (2010). Aggregation vs. global diffusive behavior in the higher-dimensional Keller–Segel model. Journal of Differential Equations, 248(12), 2889–2905.

\bibitem{Winkler2018}
Winkler, M. (2018). A critical blow-up exponent in a chemotaxis system with nonlinear signal production. Nonlinearity, 31(5), 2031.

\bibitem{Winkler2020}
Winkler, M. (2020). Global classical solvability and stabilization in a two-species chemotaxis–competition system with signal consumption. Mathematical Models and Methods in Applied Sciences, 30(14), 2679–2733.

\bibitem{Xu2025}
Xu, Y., and Fu, H. (2025). A decoupled, mass-conservative, block-centered finite difference method for the Keller–Segel model. arXiv preprint, arXiv:2401.02014.

\bibitem{Zhao2023}
Zhao, H., Wang, Z.-A., and Lin, Y. (2023). Spatiotemporal patterns in a chemotaxis system with double saturation. Nonlinearity, 36(1), 90–128.

\bibitem{Zheng2024}
Zheng, X., Liu, Q., and Lin, P. (2024). Keller–Segel model coupled with incompressible Navier–Stokes equations: numerical methods and simulations. Applied Mathematics and Computation, 442, 127703.
\end{thebibliography}
\end{document}